\theoremstyle{plain}
\newtheorem{theorem}{Theorem}[section]
\newtheorem{lm}[theorem]{Lemma}
\newtheorem{prop}[theorem]{Proposition}
\newtheorem{cor}[theorem]{Corollary}
\newtheorem{rmk}[theorem]{Remark}
\newtheorem{ex}[theorem]{Example}
\newtheorem{defi}[theorem]{Definition}
\DeclareMathOperator{\Pic}{Pic}
\DeclareMathOperator{\Int}{Int}
\DeclareMathOperator{\Sing}{Sing}
\DeclareMathOperator{\Proj}{Proj}
\DeclareMathOperator{\Div}{Div}
\DeclareMathOperator{\Cox}{Cox}
\DeclareMathOperator{\N}{N}
\DeclareMathOperator{\Exc}{Exc}
\DeclareMathOperator{\Sl}{SL}
\DeclareMathOperator{\MMob}{Mob}
\DeclareMathOperator{\Bs}{Bs}
\DeclareMathOperator{\NNE}{\overline{NE}}
\newcommand{\F}{\mathcal{F}}
\renewcommand{\O}{\mathcal{O}}
\newcommand{\U}{\mathcal{U}}
\newcommand{\C}{\mathbb{C}}
\newcommand{\Z}{\mathbb{Z}}
\newcommand{\Q}{\mathbb{Q}}
\newcommand{\R}{\mathbb{R}}
\newcommand{\III}{{\rm III}}
\newcommand{\IV}{{\rm IV}}
\renewcommand{\P}{\mathbb{P}}
\renewcommand{\H}{\text{H}}
\renewcommand{\N}{\text{N}}
\newcommand\qt{{\slash\kern-0.65ex\slash}}
\title{On non-rigid del Pezzo fibrations of low degree}
\date{}
\author{Hamid Ahmadinezhad}
\begin{document}

\maketitle

\begin{abstract}
We consider $\P(1,1,1,2)$ bundles over $\P^1$ and construct hypersurfaces of these bundles which form a degree $2$ del Pezzo fibration over $\P^1$ as a Mori fibre space. We classify all such hypersurfaces whose type $\III$ or $\IV$ Sarkisov links pass to a different Mori fibre space. A similar result for cubic surface fibrations over $\P^2$ is also presented.
\end{abstract}

\setcounter{tocdepth}{1}
\tableofcontents

\section{Introduction}
One possible outcome of the minimal model program is a Mori fibre space.
\begin{defi}\emph{A {\it Mori fibre space} is a contraction $\varphi\colon X\rightarrow S$, where
\begin{enumerate}[(1)]
\item $X$ is $\Q$-factorial with at worst terminal singularities,
\item $-K_X$ is $\varphi$-ample,
\item $\rho(X)=\rho(S)+1$,
\item $\dim S<\dim X$.\end{enumerate}}\end{defi}

Of course, by definition above, there are three cases of  $3$-dimensional Mori fibre spaces:
\begin{enumerate}[(i)]
\item $X$ is a Fano $3$-fold, when $\dim S=0$,
\item $X$ is a del Pezzo fibration, when $\dim S=1$,
\item $X$ is a conic bundle, when $\dim S=2$.\end{enumerate}

\begin{defi}\emph{Let $\varphi\colon X\rightarrow S$ and $\varphi^\prime\colon X^\prime\rightarrow S^\prime$ be Mori fibre spaces such that there is a birational map $f\colon X\dashrightarrow X^\prime $. The map $f$ is said to be {\it square} if there is a birational map $g\colon S\dashrightarrow S^\prime$, which makes the diagram 
\begin{center}$\xymatrixcolsep{2.8pc}\xymatrixrowsep{2.8pc}
\xymatrix{
X\ar[d]_{\varphi}\ar@{-->}[r]^f&X^\prime\ar[d]^{\varphi^\prime}\\
S\ar@{-->}[r]^g&S^\prime}$\end{center}
commute and, in addition, the induced birational map $f_L\colon X_L\rightarrow X^\prime_L$ between the generic fibres is biregular. In this situation, we say that the two Mori fibre spaces $X\rightarrow S$ and $X^\prime\rightarrow S^\prime$ are {\it birational square}.}\end{defi}

\begin{defi}\emph{A Mori fibre space $X\rightarrow S$ is {\it birationally rigid} if for any birational map $f\colon X\dashrightarrow X^\prime$ to another Mori fibre space $X^\prime\rightarrow S^\prime$, there exists a birational selfmap $\alpha\colon X\dashrightarrow X$ such that the composite $f\circ\alpha\colon X\dashrightarrow X^\prime$ is square.}\end{defi} 

In~\cite{CPR} it was shown that a general member in the list of $95$ families of Fano $3$-folds is birationally rigid. Birational rigidity of conic bundles has been studied by a number of people, for example see \cite{Iskovskikh2}, \cite{Iskovskikh1}, \cite{Sarkisov1}, \cite{Sarkisov2} and \cite{BCZ}. Del Pezzo fibrations split into $9$ cases according to the degree of the fibres, that is the intersection number $K_L^2$, where $L$ is the generic fibre. If the degree is greater than $5$, it is known that the $3$-fold is rational. Alexeev in~\cite{alexeev} proved that a standard degree $4$ del Pezzo fibration is birational to a conic bundle, and hence they are non-rigid. Rigidity of degree $3$ del Pezzo fibrations have been studies by many authors; for example see~\cite{pukhlikov} and \cite{BCZ}. Birational geometry of lower degree del Pezzo fibrations has been only studied in the smooth case. The main contributions being works of Pukhlikov~\cite{pukhlikov} and Grinenko~\cite{Grinenko-1, Grinenko-2, Grinenko-3}. In fact the smoothness condition of these varieties is very restrictive as in many cases the $3$-fold $X$ has nonsmooth terminal singularities. In that regard  most of the families constructed in this article have index $2$ singularities.

 We provide a natural construction for degree $2$ del Pezzo fibrations, denoted by $dP_2$. This is followed by classifying those which admit another Mori fibre space as a birational model (not birational square) where the other model is obtained by restriction of the $2$-ray game of the ambient space on the $3$-fold. In particular, these $3$-folds are non-rigid.\\




\paragraph{\bf Acknowledgement.} I am grateful to my supervisor Gavin Brown for introducing me to this problem, his constant support and many useful comments. This work has been supported by the EPSRC grant EP/E000258/1.


\section{Construction}

\begin{defi}\label{weighted bundle}\emph{A {\it weighted bundle over $\P^n$} is a rank 2 toric variety $\F=TV(A,I)$ defined by
\begin{enumerate}[(i)]
\item $\Cox(\F)=\C[x_0,\dots,x_n,y_0,\dots,y_m]$.
\item The irrelevant ideal of $\F$ is $I=(x_0,\dots,x_n)\cap(y_0,\dots,y_m)$.
\item and the $(\C^*)^2$ action on $\C^{n+m+2}$ is given by
\[A=\left(\begin{array}{ccccccc}
1&\dots&1&-\omega_0&-\omega_1&\dots&-\omega_m\\
0&\dots&0&1&a_1&\dots&a_m\end{array}\right)\qquad,\]
where $\omega_i$ are non-negative integers and $\P(1,a_1,\dots,a_m)$ is a weighted projective space.
\end{enumerate} }\end{defi}

\begin{defi}\label{weight-order}\begin{enumerate}[\emph{(a)}]
\item \emph{Let $T$ be a rank 2 toric variety. Suppose $t$ is a generating variable in the Cox ring of $T$ and that the action of the ${(\C^*)}^2$ on $t$ is given by $t\mapsto \lambda^a\mu^bt$, where $(a,b)\in\Z^2\backslash \{(0,0)\}$. We say that the number $\frac{a}{b}$ is the {\it ratio weight} of the variable $t$. Note that the ratio weight could be a rational number or $\infty=\frac{|a|}{0}$ or $-\infty=\frac{-|a|}{0}$.}
\item\emph{Let $T$ be a rank 2 toric variety with $\Cox(T)=\C[t_1,\dots,t_k]$. Define a total order $\preceq$ on $\{t_0,\dots,t_k\}$ by $t_i\preceq t_j$ if and only if the ratio weight of $t_j$ is less than or equal to the ratio weight of $t_i$. Note that we allow $-\infty$ and $\infty$ in their own right. If the ratio weight of $t_i$ is strictly bigger than the one for $t_j$, we write $t_i\prec t_j$.}
\end{enumerate}
\end{defi}
\begin{rmk}\emph{Note that the order $\preceq$ above is induced by the usual order in the set of extended real numbers in the {\bf reverse} direction!}\end{rmk}

Without loss of generality we can assume the variables of the $\Cox(\F)$ in Definition~\ref{weighted bundle} are in order with respect to $\preceq$. 
Let $Y_0,\dots,Y_r$ be the partition of ${y_0,\dots,y_m}$ such that variables contained in each $Y_i$ have the same ratio weight and that $Y_i$ is nonempty and contains all variables with that ratio weight. Furthermore we assume that they are in order with $Y_i\prec Y_{i+1}$, meaning the ratio weight of the variable in $Y_i$ is strictly bigger than the ratio weight of variables in $Y_{i+1}$. Note that this last condition makes $Y_0,\dots,Y_r$ a unique partition of ${y_0,\dots,y_m}$.

Consider the ideal $I_j=(x_0,\dots,x_n,Y_0,\dots,Y_{j-1})\cap(Y_j,\dots,Y_r) \subset \Cox(\F)$. Let $\F_j$ be the rank two toric variety defined by $TV(A,I_j)$, i.e.
\[\F_j=(\C^{n+m+2}\backslash V(I_j))\qt (\C^*)^2\]
in particular $\F_0=\F$. 
The following is an observation of the Theorem~4.1 in~\cite{BZ}, also known in~\cite{miles-toric}.

\begin{theorem}\label{thm!BZ} Let $\F/\P^n$ be a weighted bundle as before. Then the 2-ray link of $\F$ is given by one of the following:
\begin{enumerate}[(1)]
\item If $|Y_r|=1$, i.e. the set $Y_r$ has only one element,  then
\begin{center}$\xymatrixcolsep{3.5pc}\xymatrixrowsep{2.5pc}
\xymatrix{
&\F_0\ar[r]^{\Psi_1}\ar[ld]_{\Phi}&\F_1\ar[r]^{\Psi_2}&\dots\ar[r]^{\Psi_{r-1}}&\F_{r-1}\ar[rd]^{\Phi^\prime}\\
\P^1&&&&&\F_r}$\end{center}
 where $\F_0=\F$, $\Psi_i$ are isomorphisms in codimension one and $\Phi^\prime$ is a divisorial contraction.
 
 \item If $|Y_r|>1$, then
\begin{center}$\xymatrixcolsep{3.5pc}\xymatrixrowsep{2.5pc}
\xymatrix{
&\F_0\ar[r]^{\Psi_1}\ar[ld]_{\Phi}&\F_1\ar[r]^{\Psi_2}&\dots\ar[r]^{\Psi_{r}}&\F_{r}\ar[rd]^{\Phi^\prime}\\
\P^1&&&&&\P}$\end{center}
 where $\F_0=\F$, $\Psi_i$ are isomorphisms in codimension one, $\Phi^\prime$ is a fibration and $\P=\P(a_{r_1},\dots,a_{r_k})$, where $a_{r_1},\dots,a_{r_k}$ are the denominators of the ratio weights of the variables in $Y_r$.
\end{enumerate}
\end{theorem}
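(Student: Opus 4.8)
The plan is to run the variation-of-GIT (VGIT) $2$-ray game on $\F$, using that a simplicial projective toric variety of Picard rank $2$ is a Mori dream space whose small modifications and extremal contractions are read off from the chamber decomposition of its pseudoeffective cone inside $\Pic(\F)\otimes\R\cong\R^2$; this is the framework of~\cite{BZ} and~\cite{miles-toric}. First I would record the $(\C^*)^2$-degrees, i.e.\ the columns of $A$, as rays in $\R^2$: every $x_i$ has degree $(1,0)$, and each $y_j$ has degree $(-\omega_j,a_j)$ with $a_j>0$. Ordering rays by decreasing ratio weight is exactly ordering them by increasing angle, so the distinct rays occur in the order: the $x$-ray, then $Y_0,Y_1,\dots,Y_r$; since all ratio weights of the $y_j$ are finite, $\overline{\mathrm{Eff}}(\F)$ is the strictly convex cone bounded by the $x$-ray and the ray carrying $Y_r$. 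The interior rays $Y_0,\dots,Y_{r-1}$ cut this cone into nef chambers $C_0,\dots,C_r$, and I would identify the quotient attached to $C_j$ with $\F_j=TV(A,I_j)$ by matching, for $\theta\in C_j$, the $\theta$-unstable locus with $V(I_j)=V(x,Y_0,\dots,Y_{j-1})\cup V(Y_j,\dots,Y_r)$. This realises the whole diagram as the VGIT wall-crossing sequence, with $\F_0=\F$.

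Next I would treat the walls one at a time. The left boundary, the $x$-ray, carries section ring $\C[x_0,\dots,x_n]$, so the contraction $\Phi$ it defines is the projection of the bundle onto the base $\P^n$, a fibre-type contraction. For an interior wall $W_j$ (spanned by $Y_j$), comparing the unstable loci of $\F_j$ and $\F_{j+1}$ shows that the two loci exchanged by $\Psi_{j+1}$ are $\{Y_{j+1}=\dots=Y_r=0\}\subset\F_j$ and $\{x=Y_0=\dots=Y_{j-1}=0\}\subset\F_{j+1}$, of codimensions $c_+=\sum_{i>j}|Y_i|$ and $c_-=(n+1)+\sum_{i<j}|Y_i|$ respectively. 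Since the $x$-variables lie strictly on the clockwise side of every interior wall and $n\ge1$, one has $c_-\ge n+1\ge2$, so the clockwise locus is never a divisor; hence the type of the crossing is decided solely by $c_+$, and $\Psi_{j+1}$ is an isomorphism in codimension one precisely when $c_+\ge2$.

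Finally I would read off the dichotomy at the far boundary. For $j\le r-1$ one has $c_+\ge|Y_r|$, so when $|Y_r|\ge2$ every interior crossing $\Psi_1,\dots,\Psi_r$ is an isomorphism in codimension one, and the remaining contraction is attached to the extreme ray $W_r$. As $W_r$ is a face of the cone generated by the degrees, the only monomials whose degree lies on $W_r$ are those in the $Y_r$-variables, so its section ring is the weighted polynomial ring on $Y_r$ and the contraction is the fibration $\Phi'\colon\F_r\to\P(a_{r_1},\dots,a_{r_k})$, giving case~(2). When $|Y_r|=1$, instead $c_+=1$ at the last interior wall $W_{r-1}$, so the exchanged locus $\{Y_r=0\}=\{y_m=0\}$ is a divisor and the crossing $\F_{r-1}\dashrightarrow\F_r$ is the divisorial contraction $\Phi'$; observe that $\F_r$ excludes $\{y_m=0\}$, so normalising $y_m=1$ drops its Picard rank to one, consistent with a divisorial contraction, giving case~(1). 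The main obstacle I anticipate is not a single hard estimate but the bookkeeping of the GIT dictionary: rigorously matching each ideal $I_j$ to the correct chamber, justifying the codimension counts of the exchanged loci, and in case~(1) handling the boundary model where the Picard rank drops.
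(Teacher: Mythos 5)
The paper gives no proof of this theorem at all: it is stated as an observation of Theorem~4.1 of \cite{BZ} (see also \cite{miles-toric}), and your VGIT wall-crossing argument — matching each $I_j$ to the unstable locus of the chamber $C_j$ cut out by the rays of the $Y_i$, and reading the type of each crossing from the codimensions $c_\pm=\sum_{i>j}|Y_i|$ and $(n+1)+\sum_{i<j}|Y_i|$ of the exchanged loci — is precisely the argument of those references, with the dichotomy on $|Y_r|$ falling out correctly. So your proposal is correct and follows essentially the same route as the paper's (cited) source; the only points needing the bookkeeping you already flag are the identification of $\F_r$ with $\Proj$ of the section ring on the wall in case~(1) and the standing assumption $n\ge 1$.
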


Note that case $(1)$ in this theorem is the Type~$\III$ Sarkisov link of $\F$ and case $(2)$ is the Type~$\IV$.

\begin{defi}\label{Xi} \emph{ Let $\F/\P^n$ be a weighted bundle as in Definition~\ref{weighted bundle}, and $\F_i$ be the varieties appearing in its 2-ray link of Theorem~\ref{thm!BZ}.
Let $\overline{X}\colon(f=0)\subset\C^{n+m+2}$ be a hypersurface in $\C^{n+m+2}$, the Cox cover of $\F$, defined by $f\in\C[x_0,\dots,x_n,y_0,\dots,y_m]$. Assume $f$ is irreducible, reduced and homogeneous with respect to the action of ${(\C^*)}^2$. Define $X_i\subset\F_i$ to be  
\[X_i=(\overline{X}\backslash V(I_i))/{(\C^*)}^2\]
and let $\psi_i$ (respectively $\varphi$, $\varphi^\prime$) be the restriction of $\Psi_i$ (respectively $\Phi$, $\Phi^\prime$) to $X_{i-1}$. Then we say $X_0$ has an {\it $\F$-link} if
\begin{enumerate}[(i)]
\item $\psi_i$ are isomorphisms in codimension one (possibly isomorphisms).
\item $\varphi$ and $\varphi^\prime$ are extremal contractions.\end{enumerate} }\end{defi}
In other words, $X_0$ has an $\F$-link if the 2-ray game of $X_0$ is obtained by the restriction of the 2-ray game of $\F_0$ (although some $\varphi_i$ may be isomorphisms and hence redundant from the game). If in addition, each $X_i$ is $\Q$-factorial with terminal singularities, then we say $X_0$ has an {\it $\F$-Sarkisov link}. 

\section{Sarkisov links from general $dP_2\slash\P^1$ hypersurfaces}

We consider weighted bundles over $\P^1$ with fibre $\P(1,1,1,2)$; these are a natural place to embed 3-fold degree 2 del Pezzo fibrations.

\begin{defi}\label{dP2-definition}\emph{ A $3$-fold $X$ is a {\it degree 2 del Pezzo fibration over $\mathbb{P}^1$} (denoted by $dP_2$ fibration, or simply $dP_2\slash\P^1$) if $X$ has an extremal contraction of fibre type $\varphi \colon X\rightarrow\mathbb{P}^1$ such that
\begin{enumerate}[(a)]
\item $X$ has at worst terminal singularities and is $\Q$-factorial.
\item The nonsingular fibres of $\varphi$ are del Pezzo surfaces of degree two.
\end{enumerate}}
\end{defi}

Let $\mathcal{F}$ be a rank two toric variety defined by $\mathcal{F}=TV(I,A)$, where $I\subset\mathbb{C}[u,v,x,y,z,t]$ is the irrelevant ideal $I=(u,v)\cap(x,y,z,t)$ and $A$ is the representing matrix of the action of $\mathbb{C}^*\times\mathbb{C}^*$ given by

\begin{equation}\label{Amat}
A=\left(\begin{array}{cccccc}
1&1&-\alpha&-\beta&-\gamma&-\delta\\
0&0&1&1&1&2\end{array}\right)\quad.\end{equation}

\begin{rmk}\emph{ Up to the action of $\Sl(2,\mathbb{Z})$, any matrix of type \eqref{Amat} can be written uniquely in one of the following forms:
\[\begin{array}{lllll}
\vspace{0.4cm}
(i)&\qquad& A=\left(\begin{array}{cccccc}
1&1&0&-a&-b&-c\\
0&0&1&1&1&2\end{array}\right)&\qquad& 0<c\,,\,0\le a\le b\\
\vspace{0.4cm}
(ii)&\qquad& A=\left(\begin{array}{cccccc}
1&1&-a&-b&-c&0\\
0&0&1&1&1&2\end{array}\right)&\qquad &\hspace{0.42cm} 0\le a\le b\le c\\
(iii)&\qquad& A=\left(\begin{array}{cccccc}
1&1&-a&-b&-c&-1\\
0&0&1&1&1&2\end{array}\right)&\qquad& \hspace{0.42cm} 0<a\leq b\leq c\quad.\end{array}\]}\end{rmk}

The Picard group of $\mathcal{F}$ is isomorphic to $\mathbb{Z}^2$. Let $L$ and $M$ be Weil divisors of $\F$ with weights $(1,0)$ and $(0,1)$. For example in the case $(i)$ above $u\in H^0(\F,L)$ and  $x\in H^0(\F,M)$. A simple toric singularity analysis shows that $\F$ is smooth away from the curve $\Gamma_t=(x=y=z=0)$. The curve $\Gamma_t$ is a rational curve with singularity of transverse type $\frac{1}{2}(1,1,1)$ along $\Gamma_t$.

Let $D= 4M-eL\in\Div(\F)$ be a divisor in $\mathcal{F}$ and $X=(f=0)\subset\F$ be the hypersurface of $\F$ defined by a general $f\in H^0(\F,D)$.
We say that $X\subset\F$ has bi-degree $(-e,4)$ and encode these information about $X$ and $\F$ with the notation
\[\left(\begin{array}{c}-e\\4\end{array}\right)\subset \left(\begin{array}{cccccc}
1&1&-\alpha&-\beta&-\gamma&-\delta\\0&0&1&1&1&2\end{array}\right)\quad.\]

The goal is to find conditions on $X$ and $\mathcal{F}$ such that $X$ is a Mori fibre space, whose generic fibre is a del Pezzo surface of degree 2, that has an $\F$-Sarkisov link to another Mori fibre space.

\subsection{The main result}

\begin{theorem}\label{list}\label{th!dP2main}
 Consider a hypersurface $X\subset \F$ with 
\[\left(\begin{array}{c}-e\\4\end{array}\right)\subset\left(\begin{array}{cccccc}
1&1&-\alpha&-\beta&-\gamma&-\delta\\0&0&1&1&1&2\end{array}\right)\quad,\]
where the weights $\alpha,\beta,\gamma$ are normalised with $\gamma\ge\beta\ge\alpha\geq 0$ and $\delta\ge 0$. 
Suppose the Type~\III\ or \IV\ 2-ray game of $\F$ restricts to a Sarkisov link for $X$.
Then the weights $\alpha,\beta,\gamma,\delta,e$ are among those appearing in the left-hand column of Table~\ref{table!dP2list}.

Moreover, we show in \ref{geometry-main-theorem} below that if $X$ is a general hypersurface of type $(\alpha,\beta,\gamma,\delta;e)$ from table~\ref{table!dP2list}, then $X$ is nonrigid. The Sarkisov link to another Mori fibre space is described in the remaining columns of Table~\ref{table!dP2list}.

\begin{table}[ht]\small
\[\begin{array}{cc||c|c|c|c}
\text{No.}&(\alpha,\beta,\gamma,\delta;e)&\psi_1&\psi_2&\varphi^\prime&\text{new model}\\
\hline
\hline
1&(0,0,0,0;-1)&\text{n/a}&\text{n/a}&\text{contraction}&\mathbb{P}(1,1,1,2)\\
\hline
2&(0,0,0,1;0)&\text{n/a}&\text{n/a}&\text{contraction}&Y_4\subset\mathbb{P}(1,1,1,2,2)\\
\hline
3&(0,0,1,0;0)&\text{n/a}&\text{n/a}&\text{contraction to a line}&Y_4\subset\mathbb{P}(1,1,1,1,2)\\
\hline
4&(0,1,1,0;0)&\text{flop of $2\times\P^1$}&\text{n/a}&\text{fibration}&dP_2 \text{ fibration}\\
\hline
5&(0,0,1,1;0)&\text{flop of 4} &\text{n/a}&\text{divisorial contraction}&Y_4\subset\mathbb{P}^4 \\
 & &\text{disjoint } \P^1& &\text{to a point}&\\
\hline
6&(1,1,1,1;2)&\cong&\text{n/a}&\text{fibration}&\text{conic bundle with}\\
&&&&&\text{discriminant }\Delta_8\subset\P^2\\
\hline
7&(0,1,1,1;1)&\text{flop}&\text{flip}&\text{fibration}&dP_3 \text{ fibration}\\
\hline
8&(0,1,1,2;2)&\text{flop}&\text{n/a}&\text{fibration}&\text{conic bundle over}\\ 
 && & &&\mathbb{P}(1,1,2)\text{ with}\\
 &&&&&\text{disc. }\Delta_{10} \subset \P(1,1,2)\\
\hline
9&(0,1,2,1;2)&\text{flop}&\cong&\text{contraction}&Y_6\subset\mathbb{P}(1,1,1,2,3)\\
\hline
10&(0,1,1,3;3)&\text{flop}&\text{n/a}&\text{contraction}&Y_6\subset\mathbb{P}(1,1,2,2,3)\\
\hline
11&(0,2,2,1;2)&\text{anti-flip}&\cong&\text{fibration}&dP_1 \text{ fibration}\\
\hline
12&(0,1,2,3;3)&\text{anti-flip}&\text{flop}&\text{contraction}&Y_5\subset\mathbb{P}(1,1,1,1,2)\\
\hline
13&(0,1,2,4;4)&\text{anti-flip}&\cong&\text{fibration}&dP_2 \text{ fibration over }\mathbb{P}(1,2)\\
\end{array}\]
\caption{{\footnotesize Data of Type~\III\ and \IV\ links from general degree~2 del Pezzo hypersurface fibrations\label{table!dP2list}}}
\end{table}
\end{theorem}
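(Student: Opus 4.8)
The plan is to split Theorem~\ref{th!dP2main} into a \emph{necessity} part (any $X$ whose Type~\III\ or \IV\ game restricts to a Sarkisov link must have weights appearing in the left column of Table~\ref{table!dP2list}) and a \emph{sufficiency/geometry} part (each listed weight vector really produces the advertised link, hence a nonrigid $X$), the latter being the content deferred to~\ref{geometry-main-theorem}. First I would fix the combinatorics of the ambient game: the base coordinates $u,v$ have ratio weight $+\infty$, while the fibre coordinates have ratio weights $x\mapsto-\alpha$, $y\mapsto-\beta$, $z\mapsto-\gamma$, $t\mapsto-\delta/2$, so with $\gamma\ge\beta\ge\alpha\ge0$ the partition $Y_0,\dots,Y_r$ is governed entirely by the position of $-\delta/2$ among $-\alpha,-\beta,-\gamma$ and by coincidences among these four numbers. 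This position determines, via Theorem~\ref{thm!BZ}, the exact shape of the $2$-ray game of $\F$: the number $r$ of wall-crossings, whether $\Phi'$ is divisorial ($|Y_r|=1$, Type~\III) or a fibration ($|Y_r|>1$, Type~\IV), and in the latter case the base $\P(a_{r_1},\dots,a_{r_k})$. I would record these finitely many combinatorial skeletons as the frame on which $X$ is analysed.

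Next comes the necessity argument, in two layers. The first layer forces $\varphi\colon X\to\P^1$ to be a genuine $dP_2$ Mori fibre space: I would write a general $f\in H^0(\F,4M-eL)$ as a sum of admissible monomials and demand that enough are present for $X$ to be quasismooth with $\rho(X)=2$ and degree~$2$ del Pezzo generic fibre. Using adjunction, $-K_\F=5M-(\alpha+\beta+\gamma+\delta-2)L$ and hence $-K_X=(M+(e+2-\alpha-\beta-\gamma-\delta)L)|_X$, together with the requirement that $X$ acquire at worst terminal cyclic quotient points where it meets the ambient locus $\Gamma_t=(x=y=z=0)$ of type $\tfrac12(1,1,1)$, already constrains $e$ against $\delta$ and bounds the fibre weights. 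The second, decisive, layer analyses the restriction of each wall-crossing $\Psi_i$ to $X$: each $\Psi_i$ is the explicit toric flip across the wall moving $Y_{i-1}$ into the first block of the irrelevant ideal, and the restricted map $\psi_i$ is an isomorphism in codimension one exactly when $X$ does not contain the flipped toric stratum as a divisor, which I would translate into (non-)vanishing conditions on the monomials of $f$ along that stratum. Imposing in addition that every intermediate $X_i$ be $\Q$-factorial with terminal singularities turns each flip/flop/antiflip into an inequality via the terminal-quotient (Reid--Tai) criterion applied to the weighted point created at the wall.

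The main obstacle is precisely this last bookkeeping: tracking, through a chain of weighted small modifications, which of flip, flop or antiflip occurs at each wall, whether a divisor of $X$ is inadvertently contracted, and whether terminality survives. The combination of ``no divisor contracted'' (so each $\psi_i$ is genuinely small) with ``terminal at every stage'' is what collapses the a priori infinite range of $(\alpha,\beta,\gamma,\delta,e)$ to the thirteen rows; the delicate point is the index-$2$ behaviour coming from the weight-$2$ variable $t$, which is responsible for most of the borderline cases being excluded. I would organise the elimination by the position of $t$ in the order and by $|Y_r|$, ruling out all but finitely many weight vectors and verifying that the survivors are exactly those of Table~\ref{table!dP2list}.

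Finally, for the ``moreover'', I would proceed row by row. For each weight vector I exhibit a general $f$, confirm quasismoothness and the Mori fibre space axioms, and then simply run the ambient game of Theorem~\ref{thm!BZ}, restricting each $\Psi_i$ to read off the entries $\psi_1,\psi_2$ (isomorphism, flop, flip, or antiflip) and the terminal map $\varphi'$ (contraction or fibration), together with the resulting new model---a Fano hypersurface $Y_d\subset\P(\dots)$, a conic bundle with the stated discriminant, or a del Pezzo fibration of the stated degree. Since in every case $\varphi'$ lands on a Mori fibre space that is not birational square to $\varphi\colon X\to\P^1$, this exhibits a Sarkisov link to a different Mori fibre space and proves $X$ nonrigid; these explicit per-row computations are the substance of~\ref{geometry-main-theorem}.
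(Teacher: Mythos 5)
Your overall architecture matches the paper's: a necessity half that eliminates all weight vectors outside the table, and a sufficiency half deferred to a row-by-row restriction of the ambient $2$-ray game (which is indeed what \ref{geometry-main-theorem} does), plus a verification that the listed $X$ are genuine $dP_2/\P^1$'s (which the paper carries out via a Lefschetz/Kodaira-vanishing computation of $\Pic(X)$ that you compress into ``confirm the Mori fibre space axioms''). However, there is a genuine gap in your necessity argument. You claim that the combination of ``no divisor contracted by any $\psi_i$'' with ``terminal at every stage'' (plus quasismoothness and $\rho(X)=2$) is what collapses the infinite range of $(\alpha,\beta,\gamma,\delta;e)$ to thirteen rows. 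It is not. The paper's decisive numerical tool is Lemma~\ref{lem!-k}: if $-K_X$ does not lie in the interior of the mobile cone $\left<D_u,D_3\right>$, then the \emph{last} morphism $\varphi^\prime$ of the game is not an extremal contraction --- the paper exhibits a curve $C$ in $\Exc(\varphi^\prime)$ with $-K_X\cdot C\le 0$, so $\varphi^\prime$ is not $K$-negative and its target is not a Mori fibre space. This $K$-negativity condition, translated via adjunction into inequalities such as $a+c<2+e$, is what is invoked at almost every stage of the elimination (Propositions~\ref{3fail} and~\ref{2fail}, the cases $n=4,5,6$, and the final lemma killing $n>7$). You compute $-K_X$ by adjunction but never state the condition that makes that computation bite.

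Concretely, a case such as $(a,b,c;e)=(1,1,2;1)$ in a Type~$(i)$ scroll gives a quasismooth terminal hypersurface with $\rho=2$ whose wall-crossings all restrict to isomorphisms in codimension one; it survives every criterion you list, yet it is excluded (paper, case $n=4$) precisely because $-K_X\sim -L+M$ sits on the boundary of the mobile cone, so the terminal fibration to $\P(1,1,2)$ is not $K$-negative and the game does not end in a Sarkisov link. Without this anticanonical-positioning criterion your elimination neither terminates nor reproduces the table. A secondary, smaller discrepancy: the terminality constraints the paper actually uses (Lemmas~\ref{esign} and~\ref{lin}) come from non-isolated singularities along $\Gamma_t$ and $(y=z=t=0)$ on $X$ itself, not from a Reid--Tai analysis of quotient points created at each wall, and the remaining eliminations are Picard-number arguments (Proposition~\ref{ro3}, Lemma~\ref{1,1,4,4}) of the kind you do anticipate.
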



\section{General hypersurfaces}
In this section, we prove the constructive part, the second part, of the Theorem~\ref{th!dP2main} in one direction by  calculating the birational link for a general hypersurface in each family in Theorem~\ref{th!dP2main} and then we show in subsection~\ref{Pic-section} that these hypersurfaces are indeed $dP_2\slash\P^1$. These links are provided from the restriction of the natural 2-ray game of the ambient toric variety $\F$ to $X$.

\subsection{Geometry of the links}
\label{geometry-main-theorem}
 In order to match the notation of Theorem~\ref{thm!BZ}, in each case we rewrite the defining numerical system, normalised by the order $\preceq$, and give the numerical system of the rank 2 variety at the end of each link. Rather than following the order in Table~\ref{table!dP2list}, we analyse cases together according to the structures at the end of their links.

\subsubsection{Links to conic bundles}
\begin{description}
\item[Family~6] $u=v\prec t\prec x=y=z$\\
The 2-ray game of $\F$ starts by $\Psi_1$, which is a flip of type $(2,2,-1,-1,-1)$ in the neighbourhood $(t\neq 1)$ of the flipping curve~ $\P^1_{u:v}$. The second and final step of the 2-ray game is a $\P^2$ fibration to $\P^2_{x:y:z}$. Considering $X$ of bi-degree $(-2,4)$, the Newton polygon of $X$ is
\[\begin{array}{c|c}
\deg \text{ of }u,v\text{ coefficient}&\\
\hline
0&t^2\\
1&tx^2\quad txy\quad \dots\quad tyz\quad tz^2\\
2&x^4\quad x^3y\quad\dots\quad yz^3\quad z^4\quad.\\
\end{array}\]
This means that $f$, the defining polynomial of $X$, includes terms of the form $t^2$ and $l(u,v)tx^2$ and $q(u,v)x^4$, where $l(u,v)$ is a general linear form in $u,v$ and $q(u,v)$ is a general quadratic.  We use the notation $t^2\in F$ to say that the monomial $t^2$ appears as a term of $f$.
It is also useful for us to describe $f$ as the product of the following matrices:
\begin{equation}\label{conic1}\left(\begin{array}{ccc} u&v&t \end{array}\right)\left(\begin{array}{ccc}
*_4&*_4&*_2\\
*_4&*_4&*_2\\
*_2&*_2&1\end{array}\right)\left(\begin{array}{c} u\\v\\t\end{array}\right)\qquad,\end{equation}
where by $*_k$ we mean a general homogeneous polynomial of degree $k$ in variables $x,y,z$.

Having the monomial $t^2\in f$ ensures that $X$ does not intersect with the singular locus of $\F$ as $\Sing(\F)=\Gamma_t$. Having this key monomial also shows that $\psi_1$, the restriction of $\Psi_1$ to $X$, is an isomorphism on $X$. The restriction of $\Phi^\prime$ to $X$ defines a fibration to $\P^2_{x:y:z}$ with fibres being conic curves. The discriminant of this conic is the determinant of the $3\times 3$ matrix in \eqref{conic1}. The degree of the discriminant in this case is 8.\\

\item[Family~8] $u=v\prec x\prec y=z=t$\\
Let us describe the birational geometry of the ambient space $\F$. The 2-ray game of $\F$ starts by mapping to $\P^1$ in one side (the given extremal contraction) and anti-flip $(1,1,-1,-1,-2)$ in the other side. This anti-flip can be read by fixing the action of the second component of the $(\C^*)^2$ in the neighbourhood $(x\neq 0)$ by putting $x=1$. Then the game follows by an extremal contraction of fibre type to $\P(1,1,2)$. To restrict this toric 2-ray game to $X$, we need to know $f$, the defining polynomial of $X$, which can be seen from the Newton polygon of $X$, 
\[\begin{array}{c|c}
\deg \text{ of }u,v\text{ coefficient}&\\
\hline
0&x^2t\quad xy^2\quad xyz\quad xz^2\\
1&xyt\quad xzt\quad xy^3\quad xy^2z\quad xyz^2\quad xz^3\\
2&y^2t\quad yzt\quad z^2t\quad t^2\qquad.\\
\end{array}\]
 Here our essential terms in $f$ are $x^2t$ and $q(u,v)t^2$, where $q(u,v)$ is a general quadratic in $u,v$. Having $q(u,v)t^2\in f$ means that the singular locus of (a general quasismooth) $X$ is the intersection of $X$ with $\Gamma_t$, which in this case is only two points $(q=0)\cap\Gamma_t$. 
 
 The $\F$-Sarkisov link of a general $X$ in this family, starts by an Atiyah flop and follows by a fibration to $\P(1,1,2)$ with conic curve fibres. The flop is the restriction of the $(1,1,-1,-1,-2)$ anti-flip on $\F$. The restriction is a flop because the monomial $x^2t\in f$ allows us to eliminate the variable $t$ in the neighbourhood $(x\neq 0)$. 
 
 Similar to the previous case, considering the defining polynomial of $X$ in the form  
\begin{equation}\label{conic2}\left(\begin{array}{ccc} u&v&t \end{array}\right)\left(\begin{array}{ccc}
*_4&*_4&*_3\\
*_4&*_4&*_3\\
*_3&*_3&*_2\end{array}\right)\left(\begin{array}{c} u\\v\\t\end{array}\right)\end{equation}
tells us that the degree of the discriminant of the conic in this case is 10.

\begin{rmk} \emph{In \cite{mori-prokh}, a list of possible singularities that the base variety of a conic bundle can admit is provided. By Theorem~1.2.7. in~\cite{mori-prokh}, $\P(1,1,2)$ is a legal base since it has only a quotient singularity $\frac{1}{2}(1,1)$, which is Du Val.}\end{rmk}  
\end{description}

\subsubsection{Links to del Pezzo fibrations}
\begin{description}
\item[Family~4] $u=v\prec x=t\prec y=z$\\
The 2-ray game of $\F$ in this case is represented by
\begin{center}$\xymatrixcolsep{3pc}\xymatrixrowsep{2pc}
\xymatrix{
&\F\ar[rd]^{\Psi_1^-}\ar[ld]_{\Phi}&&\F_1\ar[rd]^{\Phi^\prime}\ar[ld]_{\Psi_1^+}\\
\P^1_{u:v}&&\mathcal{G}&&\P^1_{y:z}}\qquad,$\end{center}
where the composition map $\Psi_1=(\Psi_1^+)^{-1}\circ\Psi_1^-$, is a toric 4-fold flop. Both $\Psi_1^-$ and $\Psi_1^+$ are isomorphism away from $\P^1\times\P^1$. The first map, $\Psi_1^-$, contracts the surface $\P^1_{u:v}\times\P_{x:t}(1,2)$ to $\P^1_{x:t}$ and $\Psi_1^+$ contracts $\P^1_{y:z}\times\P^1_{x:t}$ to the same line. This composition defines $\Psi_1$ as a toric 4-fold flop. The next step of the 2-ray game, $\Phi^\prime$ provides a fibration to $\P^1_{y:z}$ with fibres isomorphic to $\P(1,1,1,2)$.

The defining equation of $X$ has the form $f=g+h$, where $g=g(x,t)$ is a quartic in variables $x$ and $t$ only. This ensures that the restriction of $\Psi_1^-$ contracts two disjoint $\P^1$, defined by $(g=0)\cap\P^1_{u:v}\times\P_{x:t}(1,2)$ to two points in $\P^1_{x:t}$, namely the solutions of $(g=0)\subset\P(1,2)$. This argument shows that $\psi_1$ is formed of a flop $\psi_1\colon X\rightarrow X_1$, which flops two disjoint copies of $\P^1$. At the end of the link, the restriction of $\Phi^\prime$ to $X_1$ provides the extremal contraction of fibre type to $\P^1$ with degree 2 del Pezzo fibres.

\item[Family~7] $u=v\prec x\prec t\prec y=z$\\
This case is similar to the previous one and the result was already found in \cite{BCZ}. A full analysis is given in~\cite{BCZ}~Family~5,~\S4.4.2.\,.

\item[Family~11] $u=v\prec x\prec t\prec y=z$\\
The diagram of the 2-ray game of $\F$ is
\begin{center}$\xymatrixcolsep{3pc}\xymatrixrowsep{2pc}
\xymatrix{
&\F\ar[r]^{\Psi_1}\ar[ld]_{\Phi}&\F_1\ar[r]^{\Psi_2}&\F_2\ar[rd]^{\Phi^\prime}&\\
\P^1_{u:v}&&&&\P^1_{y:z}}\qquad,$\end{center}
where $\Psi_1$ is the anti-flip $(1,1,-1,-2,-2)$ flipping a copy of $\P^1$ to $\P(1,2,2)$. In particular, the flipping locus of $\F_1$ has line of singularity of transverse type $\frac{1}{2}(1,1,1)$. Note that $\F$ contains a singular line $\Gamma_t$, which is preserved by $\Psi_1$. The second anti-flip $\Psi_2$, is of type $(2,2,1,-3,-3)$, which flips a surface $\P(1,2,2)$ (including $\Gamma_t$) to a singular curve of transverse type $\frac{1}{3}(1,2,2)$. $\Phi^\prime\colon\F_2\rightarrow\P^1$ is a fibration, with $\P(1,1,2,3)$ fibres.

Now we consider the restriction of this game to $X$. The essential monomials of the defining polynomial of $X$ are $t^2$ and $x^3y$. The first monomial, $t^2$ shows that $\Gamma_t\cap X$ is empty for a general $X$. In fact, Bertini Theorem implies that $X$ is smooth as the base locus of the linear system $D$ includes only the curve $\Gamma_x=(u_0:v_0;1:0:0:0)$, which is guaranteed to be smooth by $x^3y\in f$.

 The restriction of $\Psi_1$ to $X$ is a Francia anti-flip as we can eliminate the variable $y$ in a neighbourhood of the flipping curve using $x^3y$ and implicit function theorem. Note that the variety $X_1$ has a $\frac{1}{2}(1,1,1)$ singularity obtained by this anti-flip. The restriction of $\Psi_2$ to $X_1$ is an isomorphism as $t^2\in f$. And finally, $\varphi^\prime\colon X_1\rightarrow \P^1$ is a Mori fibre space with generic fibre isomorphic to a del Pezzo surface of degree 1. 

\item[Family~13] $u=v\prec x\prec y\prec z=t$\\
A similar argument shows that the general $X$ in this case, after a Francia anti-flip has an extremal contraction of fibre type to $\P(1,2)$, with generic fibre isomorphic to a degree~2 del Pezzo surface.
\end{description}

\subsubsection{Links to Fano 3-folds}
\begin{description}
\item[Family~1] $u=v\prec x=y=z=t$\\
The defining polynomial of a general $X$ in this case is of the form $uf_4(x,y,z,t)=vg_4(x,y,z,t)$, for general degree 4 polynomials $f$ and $g$ in variables $x,y,z,t$. The 2-ray game of $\F$ is continued by a fibration $\Phi^\prime$ to $\P(1,1,1,2)$ with $\P^1$ fibres. The restriction of this map to $X$ provides $\varphi^\prime:X\rightarrow\P(1,1,1,2)$, which contracts the divisor $(f=g=0)\subset X$ to a curve in $\P(1,1,1,2)$, defined by the same set of equations.

\item[Family~2]$u=v\prec x=y=z\prec t$\\
The 2-ray game of the ambient toric variety is described by
\begin{center}$\xymatrixcolsep{3pc}\xymatrixrowsep{2pc}
\xymatrix{
&\F\ar[rd]^{\Phi^\prime}\ar[ld]_{\Phi}&\\
\P^1_{u:v}&&\P&\hspace{-1.4cm}(1,1,1,2,2)}\qquad,$\end{center}
where $\Phi^\prime$ is the divisorial contraction defined by the basis of the Riemann-Roch space of the divisor $D_x\sim(x=0)$. More precisely, the equation of $\Phi^\prime$ is
\[\qquad\Phi^\prime\colon \F\rightarrow\P(1,1,1,2,2)\]
\[(u:v;x:y:z:t)\mapsto(x:y:z:ut:vt)\qquad.\]
It is clear from this equation that the divisor $(t=0)$ is contracted to the surface $\P^2_{x:y:z}$. Note that this map has no base point, as the locus where all these monomials vanish is precisely the Cox irrelevant ideal of $\F$, i.e. $(u,v)\cap(x,y,z,t)$. 

The equation of a general $X$ in this family is of the form $t^2q(u,v)=f(x,y,z)+\dots$, where $q$ is a quadratic polynomial in $u,v$ and $f$ is a quartic with variables $x,y,z$. Such $X$ has two singular points of type $\frac{1}{2}(1,1,1)$, which are located at the intersection of $X$ with $\Gamma_t$, that is the solutions of $(q=0)\cap\Gamma_t$. Then $X$ follows the 2-ray game of the ambient space by contracting the divisor $(t=0)$ to the curve $(f=0)\subset\P^2_{x:y:z}$ on an index 3 Fano 3-fold defined by $X_4\subset\mathbb{P}(1,1,1,2,2)$.\\

The equation of the Fano 3-fold, the image of $X$ under this map, can be derived explicitly using this coordinate map. For example if the coordinate variables on $\P(1,1,1,2,2)$ are $x,y,z,u^\prime,v^\prime$, then this Fano variety is the hypersurface defined by
\[q(u^\prime,v^\prime)=f(x,y,z)+\dots\qquad.\]

\begin{cor} An index $4$ Fano $3$-fold hypersurface $Y_4\subset\P(1,1,1,2,2)$ is birational to a degree 2 del Pezzo fibration over $\P^1$.\end{cor}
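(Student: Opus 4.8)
The plan is to read the analysis of Family~2 backwards. There it is shown that a general hypersurface $X\subset\F$ of type $(0,0,0,1;0)$ is a $dP_2$ fibration over $\P^1$, and that the restriction $\varphi'$ of the toric divisorial contraction $\Phi'\colon\F\to\P(1,1,1,2,2)$, $(u:v;x:y:z:t)\mapsto(x:y:z:ut:vt)$, maps $X$ onto the hypersurface $Y_4$. Since $\varphi'$ is a divisorial contraction it is in particular birational, so the whole statement reduces to checking that the \emph{general} hypersurface $Y_4\subset\P(1,1,1,2,2)$ arises in this way from a general (hence $dP_2$) fibration $X$. In other words, I must show that the assignment ``defining form of $X$'' $\mapsto$ ``defining form of $\varphi'(X)$'' hits the generic degree~$4$ form on $\P(1,1,1,2,2)$.

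First I would record the shape of the defining polynomial of $X$. Computing the monomials of bidegree $(0,4)$ for the action~\eqref{Amat} specialised to $(\alpha,\beta,\gamma,\delta;e)=(0,0,0,1;0)$, one finds $f=t^2q(u,v)+t\bigl(u\,g_1(x,y,z)+v\,g_2(x,y,z)\bigr)+h(x,y,z)$, with $q$ a binary quadratic, $g_1,g_2$ quadratics and $h$ a quartic; the three blocks contribute $3+12+15=30$ monomials. Next I would observe that, under the substitution $u'=ut$, $v'=vt$ realising $\Phi'$, one has $t^2 q(u,v)=q(u',v')$, $t\,u\,g_i=u'g_i$, $t\,v\,g_i=v'g_i$ and $h(x,y,z)=h(x,y,z)$, so that each block maps bijectively onto the corresponding block $q(u',v')$, $u'g_1+v'g_2$, $h(x,y,z)$ which together exhaust the degree~$4$ monomials on $\P(1,1,1,2,2)$ (again $3+12+15=30$ of them). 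Hence $f\mapsto F$ is a linear isomorphism between the two $30$-dimensional spaces of defining forms, and a generic $Y_4$ is exactly the image of a generic $X$.

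It then remains only to assemble the pieces. By the Family~2 analysis a general such $X$ is a $dP_2$ fibration over $\P^1$; by the linear isomorphism above the general $Y_4\subset\P(1,1,1,2,2)$ equals $\varphi'(X)$ for such an $X$; and since $\varphi'$ is birational the two threefolds are birational, which is the assertion. I expect the only genuine point to verify is the matching of generality across $\varphi'$: one must ensure that a generic $Y_4$ (in particular one whose quadratic part $q(u',v')$ in the weight~$2$ variables is nondegenerate) pulls back to an $X$ satisfying the hypotheses of Family~2, so that $X$ is quasismooth with only the two $\tfrac12(1,1,1)$ points on $\Gamma_t$, is $\Q$-factorial and terminal, and is a bona fide $dP_2/\P^1$. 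This is immediate from the isomorphism $f\mapsto F$, since a generic $F$ corresponds to a generic $f$; the remaining facts that $\varphi'$ is a divisorial (hence birational) contraction and that $X$ is a degree~$2$ del Pezzo fibration are precisely what the Family~2 computation and the subsequent Picard-group analysis supply.
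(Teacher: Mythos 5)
Your argument is correct and follows essentially the same route as the paper: there, too, the corollary is read off from the Family~2 analysis, where the divisorial contraction $\varphi'$ induced by $(u:v;x:y:z:t)\mapsto(x:y:z:ut:vt)$ maps the general $dP_2$ fibration $X$ of type $(0,0,0,1;0)$ birationally onto the hypersurface $q(u',v')=f(x,y,z)+\cdots$ in $\P(1,1,1,2,2)$. The only difference is that you make explicit the monomial-by-monomial bijection between the two $30$-dimensional spaces of defining forms, so that a \emph{general} $Y_4$ is seen to arise from a general $X$ --- a point the paper leaves implicit.
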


\item[Family~3]$u=v\prec x=y=t\prec z$\\
Analysis of the link is similar to the previous case with the final divisorial contraction $\Phi^\prime$ with equation
\[(u;v;x:y:t:z)\mapsto(uz:vz:x:y:t)\qquad.\]
The image of $X$ under this map is an index $2$ Fano hypersurface defined by a quartic in $\P(1,1,1,1,2)$. 
\begin{cor} An index $2$ Fano $3$-fold hypersurface $Y_4\subset\P(1,1,1,1,2)$ is birational to a degree 2 del Pezzo fibration over $\P^1$.\end{cor}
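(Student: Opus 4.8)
The plan is to read the corollary off from the analysis of Family~3 above. There the general degree~$2$ del Pezzo fibration $X\subset\F$ of type $(0,0,1,0;0)$ was shown to carry an $\F$-Sarkisov link whose final step is the restriction $\varphi'=\Phi'|_X$ of the toric divisorial contraction
\[\Phi'\colon(u:v;x:y:t:z)\mapsto(uz:vz:x:y:t),\]
landing in $\P(1,1,1,1,2)$. Granting that $X$ is a $dP_2/\P^1$ and that $\varphi'$ is birational, it then suffices to verify that a general quartic $Y_4\subset\P(1,1,1,1,2)$ really is realised as the image $\varphi'(X)$ of a general $X$; since birational equivalence is symmetric, this yields the statement.

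First I would set up the correspondence of defining equations. For type $(0,0,1,0;0)$ the divisor is $D=4M$, so the defining polynomial $f$ ranges over the monomial basis of $H^0(\F,4M)$, i.e.\ the monomials $u^av^bx^cy^dz^et^g$ of bi-degree $(0,4)$. Because $u,v$ carry $L$-weight $+1$, the variable $z$ carries $L$-weight $-1$, and $x,y,t$ carry $L$-weight $0$, vanishing of the $L$-degree forces $e=a+b$ on every monomial of $f$. The substitution $U=uz$, $V=vz$ (keeping $x,y,t$) therefore rewrites such a monomial as $U^aV^bx^cy^dt^g$, and the $M$-degree relation $c+d+e+2g=4$ shows it has weighted degree $a+b+c+d+2g=e+c+d+2g=4$ in $\P(1,1,1,1,2)$. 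A direct check shows this substitution is a bijection between the monomial bases of $H^0(\F,4M)$ and of $H^0(\P(1,1,1,1,2),\O(4))$, the inverse being $u^av^bx^cy^dz^et^g\mapsto U^aV^bx^cy^dt^g$. Hence a general $f$ corresponds to a general quartic $F$, and the hypersurface $X=(f=0)$ with $f=(\Phi')^{*}F$ satisfies $\varphi'(X)=(F=0)=Y_4$, a general index~$2$ Fano hypersurface.

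Next I would confirm that $\varphi'$ is birational and conclude. The toric map $\Phi'$ is an isomorphism over the open set $(z\neq0)\subset\F$ and contracts only the divisor $(z=0)$; a general $X$ is not contained in $(z=0)$ and meets $(z\neq0)$, so $\varphi'\colon X\to Y_4$ is a birational morphism, contracting the divisor $(z=0)\cap X$ to a curve in $Y_4$. Together with the fact --- established in the construction and confirmed in \S\ref{Pic-section} --- that $X$ is $\Q$-factorial with terminal singularities and admits the fibre-type contraction to $\P^1$ with degree~$2$ del Pezzo fibres of Definition~\ref{dP2-definition}, birationality of $\varphi'$ shows that $Y_4$ is birational to a $dP_2/\P^1$.

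The step needing genuine care is the second: proving that the monomial substitution hits the \emph{whole} degree-$4$ system on $\P(1,1,1,1,2)$, so that the construction produces a general quartic and not a proper subfamily. The constraint $e=a+b$, coming precisely from the vanishing of the $L$-degree, is what makes this a bijection and is the crux of the argument; everything else is routine. The remaining verifications are minor: the general $Y_4$ is quasismooth (indeed smooth, since the coefficient of $t^2$ in $F$ is nonzero, so $Y_4$ misses the orbifold point $(0:0:0:0:1)$ of $\P(1,1,1,1,2)$), and the corresponding $X$ meets Definition~\ref{dP2-definition} --- here the monomial $t^2$, which by $e=a+b=0$ has constant coefficient in $f$, guarantees $X\cap\Sing(\F)=X\cap\Gamma_t=\emptyset$, so that $X$ is smooth.
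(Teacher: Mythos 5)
Your proposal is correct and follows essentially the same route as the paper: the corollary is read off from the Family~3 analysis, where the toric divisorial contraction $(u:v;x:y:t:z)\mapsto(uz:vz:x:y:t)$ restricts to a birational morphism $\varphi'\colon X\to Y_4\subset\P(1,1,1,1,2)$ contracting $(z=0)\cap X$ to a curve. The paper leaves the details implicit (``similar to the previous case''), whereas you spell out the monomial bijection between $H^0(\F,4M)$ and the degree-$4$ system on $\P(1,1,1,1,2)$ forced by the relation $e=a+b$; this is a worthwhile addition but not a different argument.
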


\item[Family~5] $u=v\prec x=y\prec t\prec z$\\
The 2-ray game of $\F$ starts by a flop and continues by a divisorial contraction to $\P^4$. The toric flop contracts a copy of $\P^1\times\P^1$ to $\P^1$ and extracts another $\P^1\times\P^1$. The restriction of this birational map to $X$ flops 4 analytically disjoint copies of $\P^1$, since the defining polynomial of $X$ includes a quartic in the $x,y$ variables.
 
A general $X$ in this family is singular at two points of type $\frac{1}{2}(1,1,1)$. As usual, these points are the locus where $X$ meets $\Gamma_t$. In fact we can assume that the defining polynomial of $X$ is of the form $(u^2+v^2)t^2+f(x,y)+\dots$, where $f$ is a general quartic in $x,y$.
The divisorial contraction has the coordinate description
\[(u:v;x:y:t:z)\mapsto(uz^2:vz^2:xz:yz:t)\qquad,\]
which shows that the divisor $(z=0)$ gets contracted to the point $p_t\in\P^4$. The equation near this point has a local type $u^2+v^2+x^4+y^4$. In other words this point is terminal. In fact this example was already known to be nonrigid. See \cite{CPR}, Example~7.5.1. 

\item[Family~9] $u=v\prec x\prec t\prec y\prec z$\\
The 2-ray game on the ambient space is
\begin{center}$\xymatrixcolsep{2.6pc}\xymatrixrowsep{2pc}
\xymatrix{
&\F\ar[r]^{\Psi_1}\ar[ld]_{\Phi}&\F_1\ar[r]^{\Psi_2}&\F_2\ar[rd]^{\Phi^\prime}&\\
\P^1_{u:v}&&&&\P&\hspace{-1.25cm}(1,1,1,2,3)&,}$\end{center}
where $\Psi_1$ is the anti-flip $(1,1,-1,-1,-2)$ and $\Psi_2$ is the flip $(2,2,1,-1,-3)$. The final contraction is
\[\Phi^\prime\colon(u:v;x:t:y:z)\mapsto(u_0:v_0:y:x_0:z_0)=(uz:vz:y:xz:tz)\qquad,\]
which is the ordinary blow up of the smooth point $p_y\in\P(1,1,1,2,3)$.
The Newton polygon of $X$ in this family is described by
\vspace{-0.15cm}
\[\begin{array}{c|c}
\deg \text{ of }u,v\text{ coefficient}&\\
\hline
0&t^2\quad x^3z\quad x^2y^2\quad xty\\
1&xy^3\quad x^2yz\quad xtz\quad ty^2\\
2&y^4\quad xy^2z\quad tyz\quad x^2z^2\\
3&xyz^2\quad tz^2\quad y^3z\\
4&xz^3\quad y^2z^2\\
5&yz^3\\
6&z^4\qquad.
\end{array}\]
Having the term $t^2\in f$, the defining polynomial of $X$, guarantees smoothness of $X$. The map $\psi_1$, the restriction of $\Psi_1$ to $X$, is an Atiyah flop as the variable $z$ can be eliminated in a neighbourhood of the flopping curve using the monomial $x^3z$ and the implicit function theorem. Similarly, we can observe that $\psi_2$ is an isomorphism as $t^2\in f$. The image of $X_1$ under $\varphi^\prime$ is an index 2 Fano hypersurface $Y$ defined by a degree 6 polynomial in $\P(1,1,1,2,3)$. One can see that under this map, the divisor $(z=0)$ goes to the point $p_y\in Y$. This point is a $cA_1$ point as the defining polynomial of $Y$ is 
\[t_0^2+x_0^3+y^4u_0v_0+u_0^6+v_0^6+\dots\qquad.\]

Conversely, a general Fano hypersurface $Y_6\subset\P(1,1,1,2,3)$ with a $cA_1$ point is birational to a degree 2 del Pezzo fibration over $\P^1$.

\item[Family~10] $u=v\prec x\prec y=z\prec t$\\
The 2-ray game on $\F$ is
\begin{center}$\xymatrixcolsep{3pc}\xymatrixrowsep{2.5pc}
\xymatrix{
&\F_0\ar[r]^{\Psi_1}\ar[ld]_{\Phi}&\F_1\ar[rd]^{\Phi^\prime}\\
\P^1&&&\P&\hspace{-1.4cm}(1,1,2,2,3)&,}$\end{center}
where $\Psi_1$ is the anti-flip $(1,1,-1,-1,-3)$. And the final contraction is $\Phi^\prime\colon\F_1\rightarrow\P(1,1,2,2,3)$ defined by
\[(u;v;x:y:z:t)\mapsto(y:z:u_0:v_0:x_0)=(y:z:ut:vt:xt)\qquad.\]
This map contracts the divisor $(t=0)$ on $\F_1$ to the line $\P^1_{y:z}\subset\P(1,1,2,2,3)$. 

The Newton polygon of a general $X$ in this family is
\[\begin{array}{c|c}
\deg S^k(u,v,w)&\\
\hline
0&x^2t\quad xy^3\quad xy^2z\quad xyz^2\quad xz^3\\
1&y^4\quad\dots z^4\quad xyt\quad xzt\\
2&y^2t\quad yzt\quad z^2t\\
3&t^2\qquad.
\end{array}\]
The coefficient of $t^2$ in the equation indicates that
\[\Sing(X)=\Gamma_t\cap X= 3\times\frac{1}{2}(1,1,1)\quad.\]
The map $\psi_1$, obtained by restricting $\Psi_1$ to $X$ is a flop $(1,1,-1,-1)$, as we are able to eliminate the variable $t$ near the flopping curve using the monomial $x^2t$. 
The map $\varphi^\prime$ contracts the divisor $(t=0)\subset X_1$ to the line $\P^1_{y:z}$ on an index 3 Fano variety $Y$ defined by a degree 6 polynomial in $\P(1,1,2,2,3)$. The defining polynomial of $Y$ is
\[x_0^2+g_3(u_0,v_0)+uq_4(y,z)+vq^\prime_4(y,z)+\dots\qquad,\]
where $g_3$ is a general cubic in the variables $u_0,v_0$; $q$ and $q^\prime$ are general quartics in $y,z$. Hence $Y$ is smooth along $\P^1_{y:z}$ and has only 3 singular points of type $\frac{1}{2}(1,1,1)$, namely at the solutions of $(g_3=0)$.

\item[Family~12] $u=v\prec x\prec y\prec t\prec z$\\
The 2-ray game of $\F$ is represented in the diagram:
\begin{center}$\xymatrixcolsep{2.6pc}\xymatrixrowsep{2pc}
\xymatrix{
&\F\ar[r]^{\Psi_1}\ar[ld]_{\Phi}&\F_1\ar[r]^{\Psi_2}&\F_2\ar[rd]^{\Phi^\prime}&\\
\P^1_{u:v}&&&&\P&\hspace{-1.25cm}(1,1,1,1,2)&,}$\end{center}
where $\Psi_1$ is the anti-flip $(1,1,-1,-3,-2)$ and $\Psi_2$ is the smooth flip $(1,1,1,-1,-1)$. The singular locus of $X$ is characterised by the coefficient of $t^2\in f$; this is a cubic in $u,v$, so for $X$ general $\Sing(X)=3\times\frac{1}{2}(1,1,1)$. The map $\psi_1$, the restriction of $\Psi_1$ to $X$, is the Francia anti-flip as the variable $t$ can be eliminated in a neighbourhood of $\Gamma_x=(u_0:v_0;1:0:0:0)$ using the monomial $x^2t$. Similarly, using the monomial $xy^3$, we can eliminate the variable $x$ in a neighbourhood of the flipping locus of $\Psi_2$ and observe that $\psi_2$ is an Atiyah flop. The final map $\varphi^\prime$, contracts the divisor $(z=0)$ to a point on an index 1 Fano hypersurface defined by a degree 5 polynomial in $\P(1,1,1,1,2)$. Note that this Fano hypersurface is quasi-smooth away from the image of contraction, which is a $cD_4$ singularity as it is locally defined by $x^2+u^3+v^3+y^4$. 
It was shown in \cite{CPR} that a general quasi-smooth Fano hypersurface of this type is birationally rigid.

\end{description}

\subsection{Mobile cones}\label{mobile-cones}
The aim is to prove that all varieties listed in Table~\ref{table!dP2list} satisfy the conditions of Definition~\ref{dP2-definition}. In fact the only remaining part to check is the Picard number. This is done in~\ref{Pic-section}. On the other hand, we must prove that this is the complete list; meaning any $dP_2\slash\P^1$ which does not appear in this list cannot have a link to another Mori fibre space following the 2-ray game of $\F$. Therefore we compute various cones of $X$ and $\F$ that we need later.

\begin{prop}\label{generators-of-mob} Let $\F$ be the toric variety described in~\ref{general-F}. Then 
\begin{enumerate}[(i)]
\item\label{eff-cone-F} the pseudo-effective cone of $\F$ is generated by $D_u$ and $D_4$, and 
\item\label{mob-cone-F} the mobile cone $\MMob(\F)$ is generated by $D_u$ and $D_3$,
\end{enumerate}
where $D_u,D_v$ and $D_i$ are divisors defined by $(u=0),\,(v=0)$ and $(x_i=0)$.\end{prop}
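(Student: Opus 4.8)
The plan is to reduce the whole statement to rank-two toric combinatorics. Since $\F$ is a projective simplicial toric variety with $\Pic(\F)\cong\Z^2$, both cones live in the two-dimensional space $\Pic(\F)_\R\cong\R^2$, and I would read them off the weight matrix $A$ of \eqref{Amat} together with the ratio-weight order of Definition~\ref{weight-order}. First I would record the classes of the torus-invariant prime divisors as the columns of $A$: the base sections $D_u,D_v$ have class $(1,0)$ and ratio weight $\infty$ (the $\preceq$-least), while the fibre divisors $D_1,\dots,D_4=(x_1=0),\dots,(x_4=0)$ have classes $(-a_i,b_i)$ with $a_i\ge 0$, $b_i\in\{1,2\}$, whose ratio weights $-a_i/b_i$ strictly decrease, so that $D_4$ carries the smallest ratio weight. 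Both parts then become assertions about which of these finitely many rays are extremal for the two cones.

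For part~(i) I would invoke the standard fact that the pseudo-effective cone of a projective $\Q$-factorial toric variety is the closed cone spanned by the classes of all torus-invariant prime divisors. As this cone lives in $\R^2$ it is generated by its two extremal rays, namely the invariant divisors of largest and smallest ratio weight; by the list above these are $D_u$ (equally $D_v$) and $D_4$, giving~(i). For part~(ii) the plan is to use that, for a Mori dream space, $\MMob(\F)$ is the union of the pullbacks of the nef cones of the small $\Q$-factorial modifications of $\F$. By Theorem~\ref{thm!BZ} these modifications are exactly the intermediate models $\F_0=\F,\F_1,\dots$ of the $2$-ray game, and their nef cones are the successive Mori chambers into which the rays $\R_{\ge0}[D_i]$ cut $\overline{\mathrm{Eff}}(\F)$. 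The inner edge of this union is $[D_u]=\Phi^{*}\O_{\P^1}(1)$, the nef-but-not-big class of the given fibration $\Phi$, so $D_u$ is one generator of $\MMob(\F)$; it then remains to show that the outer edge is $[D_3]$.

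This last identification is where the content lies, and I expect it to be the main obstacle. In the generic situation of~\ref{general-F} the four fibre ratio weights are distinct, so the partition of Theorem~\ref{thm!BZ} has $|Y_r|=1$ and the $2$-ray game is a Type~$\III$ link: after the wall-crossings the last small modification is $\F_{r-1}$, whose nef chamber has outer wall $\R_{\ge0}[D_3]$, and the final map $\Phi'$ is the divisorial contraction of $D_4=(x_4=0)$. I would then show that the chamber lying between $[D_3]$ and $[D_4]$ is \emph{not} contained in $\MMob(\F)$: it is the pullback of the nef ray of the rank-one contracted model, and every class in its interior has $D_4$ as a fixed component. Concretely I would check combinatorially that for a class $E$ in the open wedge between $[D_3]$ and $[D_4]$ the class $E$ can only be written as a non-negative combination of the $[D_i]$ using $[D_4]$ with positive coefficient, so that every monomial in $H^{0}(\F,mE)$ is divisible by $x_4$ and hence $D_4$ is forced into the base divisor. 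Together with the inner generator this yields $\MMob(\F)=\langle D_u,D_3\rangle$. The delicate point throughout is the bookkeeping that identifies which wall is the last flip/flop wall as opposed to the divisorial wall, and it is precisely here that the hypothesis $|Y_r|=1$ feeding Theorem~\ref{thm!BZ} must be used; in the degenerate configurations where several fibre weights coincide the rays $[D_3]$ and $[D_4]$ collapse and the statement persists.
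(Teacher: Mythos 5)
Your argument is correct, but it reaches the conclusion by a genuinely different route from the paper, most visibly in part~(i). For the pseudo-effective cone the paper does \emph{not} invoke the standard toric fact that $\NNE^1$ is spanned by the torus-invariant prime divisors; instead it argues directly that any prime divisor whose class lies outside $\left<D_u,D_4\right>$ leads to a contradiction: writing such a class as $A=-\mu D_u+\lambda D_4$ (and two similar forms), it picks the curve $l=(x_1=x_2=x_3=0)$, computes $A\cdot l=-\mu<0$ to force $l\subset A$, and then sweeps out the divisor $D_1$ by a covering family of curves numerically proportional to $l$ to contradict primality of $A$. Your appeal to the monomial basis of $H^0(\F,mE)$ (every effective class is represented by a torus-invariant effective divisor, so the effective cone is the span of the columns of $A$ together with $(1,0)$) is shorter, standard, and avoids the paper's somewhat delicate curve-family bookkeeping; the paper's version is more self-contained and elementary. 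For part~(ii) the two arguments essentially coincide in substance: the paper shows $|D_u|$ is base point free, that any effective divisor $\Q$-linearly equivalent to $D_3$ decomposes so that $\Bs|D_3|\subset(x_3=x_4=0)$ has codimension two, and that any class strictly between $D_3$ and $D_4$ has $D_4$ in its fixed part --- which is exactly your monomial-divisibility observation, just without the Mori-chamber/SQM packaging. Your extra layer (identifying $\MMob$ with the union of pullbacks of nef cones of the $\F_i$ from Theorem~\ref{thm!BZ}) is harmless and consistent with how the paper later uses the GIT chamber structure in Lemma~\ref{u-homology}, but it is not needed for the proof; the decisive step in both treatments is the fixed-component argument on the wedge between $[D_3]$ and $[D_4]$, and your handling of the degenerate case where $D_3$ and $D_4$ lie on the same ray is also consistent with the Type~$\IV$ situation.
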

\begin{proof}
The fact that the Picard number of $\F$ is $\rho(\F)=2$ allows one to write $\N^1(\F)_\R\cong\R^2$ and hence draw all these cones in the plane
\[\xygraph{
!{(0,0) }="a"
!{(2,0) }*+{D_u,D_v}="b"
!{(0,1.8) }*+{D_1}="c"
!{(-0.5,1.8) }*+{D_2}="d"
!{(-1.3,1.8) }*+{D_3}="e"
!{(-2,1.8) }*+{D_4}="f"
"a"-"b" "a"-"c" "a"-"d" "a"-"e" "a"-"f"
}  \]
The rays are labelled by divisors that lie on them away from the origin.
Note that the rays correspond to some $D_i$ and $D_j$ might coincide. This is exactly when $x_i=x_j$ . 

Obviously $\left<D_u,\dots,D_4\right>\subset\NNE^1(\F)$. We show that any prime divisor corresponding to a lattice point in the plane outside of  this cone is not numerically equivalent to an effective divisor. Any divisor given by a lattice point in $\R^2-\NNE^1(\F)$ is numerically equivalent to a divisor $A$, $A^\prime$ or $A^{\prime\prime}$, where
\[\begin{array}{lll}
A=-\mu D_u+\lambda D_4&&\text{for }\mu>0,\,\lambda\geq 0,\\
A^\prime=-\mu D_u-\lambda D_4&&\text{for }\mu>0,\,\lambda> 0,\\
A^{\prime\prime}=\mu D_u-\lambda D_4&&\text{for }\mu\geq 0,\,\lambda> 0.
\end{array}\]

We show that $A$ cannot be effective. Define a curve $l=(x_1=x_2=x_3=0)\subset\F$, where without loss of generality $b_4=1$. We have
\[A\cdot l=-\mu D_u\cdot l+\lambda D_4\cdot l=-\mu<0\qquad.\]
Since $A$ is prime, we must have $l\subset A$.
Now consider the family of curves defined by the ideal \[I_C=(x_1,x_2+\varphi_{\delta-\beta}(u,v)x_4,x_3\psi_{\delta-\gamma}(u,v)x_4)\quad.\] For any curve $C$ in this family and any divisor $D$ on $\F$, there exists a positive rational number $r$ such that $r(l\cdot D)=C\cdot D$. Hence The support of this family lies in $A$. On the other hand, it is easy to see that for any point in $D_1$ there is a curve $C$ in this family which contains that point. In other words, $D_1$ is contained in the support of this family and hence $D_1\subset A$. But $A$ is prime and this is a contradiction.

Proofs for the other two cases, $A^\prime$ and $A^{\prime\prime}$ are similar and we do not write them here.\\

In order to prove (\ref{mob-cone-F}), we must show that the cone generated by $D_u$ and $D_3$ is the $\MMob(\F)$. The divisor $D_u$ is mobile as $D_v\in|D_u|$ and hence this linear system is base point free. Any effective divisor $\Q$-linearly equivalent to $D_3$ is of the form $\lambda D_4+\mu D_i$ or $\lambda D_4+\mu D_u$ for some positive integers $\lambda$ and $\mu$. Therefore $\Bs(D_3)\subset (x_3=x_4=0)$, and hence $|D_3|$ has no fixed component; the fixed part has  codimension at least two. This shows that $\left<D_u,D_3\right>\subset\MMob(\F)$. To complete the proof we must show that any effective divisor in $\NNE^1(\F)-\MMob(\F)$ is not mobile. But any such divisor is numerically equivalent to a divisor of the form $\mu D_3+\lambda D_4$ for some non-negative integers $\mu$ and $\lambda$. The fixed part of the linear system of such divisor includes $D_4$ and hence this divisor cannot be mobile.
\end{proof}

\begin{defi}\label{def-MDS}\emph{(\cite{Hu}, Definition~1.10) A normal projective variety $X$ is called a {\it Mori dream space} if
\begin{enumerate}[(i)]
\item $X$ is $\Q$-factorial and $\Pic(X)=N^1(X)$ is finitely generated.
\item there are finitely many birational maps $f_i\colon X\dashrightarrow X_i$ for $1\leq i\leq k$, which are isomorphisms in codimension one, such that if $B$ is a mobile divisor then there is an index $1\leq i \leq k$ and a semiample divisor $B_i$ on $X_i$ such that $B=f_i^*B_i$ .\end{enumerate}}\end{defi}

The key point of this definition is that it allows one to run MMP on $X$ in a very easy and clear way. 
If $X$ is a Mori dream space then the pseudo-effective cone $\NNE^1(X)$ is divided into finitely many rational polyhedra, $R_1,\dots,R_m$,
\[\NNE^1(X)=\bigcup_{j=1}^mR_j\qquad.\]
The mobile cone is a union of $M_1,\dots,M_k$, some subset of the rational polyhedra $R_1,\dots,R_m$, and the birational maps $f_1,\dots,f_k$  defined in \ref{def-MDS} are precisely the maps $\varphi_{B_i}$ associated to a big mobile
divisor $B_i$ belonging to the interior of each polytope $M_i$. For details see \cite{Hu}~Proposition~1.11.

It was proved in \cite{BCHM}~Corollary~1.3.1 that any log Fano variety is a Mori dream space. In particular, a $dP_2$ fibration is a Mori dream space. The idea of defining techniques in this article is that we are trying to find $dP_2$ fibrations $X\subset\F$ whose decomposition of $\MMob(X)$ into $M_1,\dots,M_k$ coincides with the decomposition of $\MMob(\F)$ into such polytopes. In other words, $X$ is embedded into $\F$ and 
\[\Cox(X)=\Cox(\F)\slash(f=0)\qquad.\]

\begin{lm} Let $X\subset\F$ be a hypersurface of the rank two toric variety in~\ref{general-F} defined by a homogeneous polynomial of bi-degree $(\omega,4)$. If $X$ is a $dP_2$ fibration then $\sigma=\left< L,X\cap D_3\right>$ is a subcone of $\MMob(X)$.\end{lm}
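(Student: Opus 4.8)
The plan is to show that the two edges of $\sigma$ — the classes $L|_X$ and $X\cap D_3 = (D_3)|_X$ — are each movable divisor classes on $X$, and then to deduce that the whole two–dimensional cone they span lies in $\MMob(X)$ by convexity of the movable cone. Since $X$ is a $dP_2$ fibration, Definition~\ref{dP2-definition} supplies a fibre–type contraction $\varphi\colon X\to\P^1$ with $\rho(X)=\rho(\P^1)+1=2=\rho(\F)$; consequently the restriction map $\N^1(\F)_\R\to\N^1(X)_\R$ is an isomorphism of two–dimensional spaces, and under it $\MMob(\F)=\langle D_u,D_3\rangle=\langle L,D_3\rangle$ (recall $u\in H^0(\F,L)$, so $D_u\sim L$) is carried precisely to $\sigma=\langle L|_X,(D_3)|_X\rangle$. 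It therefore suffices to prove that each generator restricts to a movable class on $X$.

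For the edge $L$ this is immediate: the linear system $|L|_X|$ is the pullback $\varphi^\ast|\O_{\P^1}(1)|$ of the pencil $(u:v)$ defining $\varphi$, hence base–point free, and in particular movable. For the edge $D_3$ I would argue by restriction of sections. By Proposition~\ref{generators-of-mob}(\ref{mob-cone-F}) the base locus of $|D_3|$ on $\F$ satisfies $\Bs|D_3|\subseteq(x_3=x_4=0)$, a codimension–two toric stratum of $\F$. The image of $H^0(\F,D_3)\to H^0(X,(D_3)|_X)$ is a linear subsystem whose base locus is exactly $X\cap\Bs|D_3|\subseteq X\cap(x_3=x_4=0)$. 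Since $X$ is an irreducible $3$-fold not contained in the $2$-dimensional stratum $(x_3=x_4=0)$, this intersection has dimension at most $1$, i.e. codimension $\ge 2$ in $X$. As the stable base locus of the full system $|(D_3)|_X|$ is contained in that of any subsystem, $\Bs|(D_3)|_X|$ has codimension $\ge 2$, so $(D_3)|_X$ is movable.

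To close, I would invoke convexity: if $m'D'$ and $m''D''$ have no fixed components, the products of their sections span a subsystem of $|m'm''(D'+D'')|$ whose base locus lies in $\Bs|m'D'|\cup\Bs|m''D''|$, again of codimension $\ge 2$; thus the movable classes form a convex cone, and together with the two movable edges this yields $\sigma\subseteq\MMob(X)$. The step I expect to be the main obstacle is the one place that genuinely uses the $dP_2$ hypothesis rather than pure toric input, namely the properness of $X\cap(x_3=x_4=0)$, equivalently $X\not\supseteq(x_3=x_4=0)$. This is exactly where the defining polynomial $f$ must carry the essential monomials isolated in the family–by–family analysis of \S\ref{geometry-main-theorem}: their presence forces $f\notin(x_3,x_4)$, so that $X$ meets the stratum in the expected codimension and no new fixed component is created upon restriction.
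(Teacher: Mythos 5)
Your overall strategy coincides with the paper's: show that $\Bs|L|_X|$ is empty, that the base locus of $|D_3|_X|$ sits inside $X\cap(x_3=x_4=0)$ and has codimension at least two, and conclude by convexity of the mobile cone. The freeness of $L|_X$ and the containment $\Bs|D_3|\subseteq(x_3=x_4=0)$ are handled exactly as in Proposition~\ref{generators-of-mob}, so those parts are fine.

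The gap is in the one step you yourself flag as the crux. You first assert that because $X$ is an irreducible $3$-fold \emph{not contained in} the $2$-dimensional stratum $(x_3=x_4=0)$, the intersection $X\cap(x_3=x_4=0)$ has dimension at most $1$. That is a non-sequitur: a $3$-fold is trivially not contained in a surface, and what must be excluded is the reverse containment $X\supseteq(x_3=x_4=0)$, i.e.\ that the stratum is a divisor on $X$ (equivalently $f\in(x_3,x_4)$), in which case the intersection is $2$-dimensional and $|D_3|_X|$ acquires a fixed component. You then acknowledge this and propose to rule it out via the ``essential monomials'' from the family-by-family analysis of \S\ref{geometry-main-theorem}. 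But that is not available here: the lemma is stated for an arbitrary hypersurface of bi-degree $(\omega,4)$ that happens to be a $dP_2$ fibration (and it is used precisely in the elimination process, i.e.\ for hypersurfaces \emph{not} yet known to lie in Table~\ref{table!dP2list}), so one cannot appeal to genericity or to membership in one of the thirteen families. The paper closes this step with a clean structural argument: Proposition~\ref{ro3} shows by an explicit intersection computation that if $(x_3=x_4=0)$ is a divisor on $X$ then $\rho_X\geq 3$, which contradicts the fact that a $dP_2$ fibration admits an extremal contraction to $\P^1$ and hence has $\rho_X=2$. Replacing your appeal to essential monomials by this Picard-number argument would complete the proof.
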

\begin{proof} Similar to the proof of Proposition~\ref{generators-of-mob}~(\ref{mob-cone-F}) one can check that $\Bs|L|$ is empty and $\Bs|D_3|$ has no fixed component. Note that $\Bs|D_3|$ is included in the locus $(x_3=x_4=0)$, and this locus must have codimension strictly bigger than 1. Otherwise, if $(x_3=x_4=0)$ defines a divisor on $X$ then Proposition~\ref{ro3} implies that $X$ is not a $dP_2$ fibration.\end{proof}


\subsection{The Picard group}\label{Pic-section}
The aim in this section is to prove $\Pic(X)\cong\Z^2$ for a general $X$ in Table~\ref{table!dP2list}.

Let us first recall some technical tools that we use in the proof. This includes a version of the Lefschetz hyperplane theorem and a generalised Kodaira vanishing theorem.

\begin{theorem}\label{vanishing}\emph{[Generalised Kodaira vanishing, \cite{mori-kollar}~Theorem~2.70.]} Let $(X,\Delta)$ be a proper klt pair. Let $N$ be a $\Q$-Cartier Weil divisor on $X$ such that $N\equiv M+\Delta$, where $M$ is a nef and big $\Q$-Cartier $\Q$-divisor. Then $\H^i(X,\O_X(-N))=0$ for $i<\dim X$.\end{theorem}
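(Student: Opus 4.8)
The plan is to reformulate the statement, via Serre duality, as a Kawamata--Viehweg vanishing for positive cohomology, and then to prove the latter by descending from a log resolution. First I would use that a klt pair has rational, hence Cohen--Macaulay, singularities, so that Serre duality applies to the reflexive rank one sheaf $\O_X(-N)$; writing $n=\dim X$ it gives
\[\H^i(X,\O_X(-N))\cong\H^{n-i}\bigl(X,\O_X(K_X+N)\bigr)^\vee,\]
so the desired vanishing for all $i<n$ is equivalent to $\H^j(X,\O_X(K_X+N))=0$ for all $j>0$. Since $N\equiv M+\Delta$ rewrites as $K_X+N\equiv(K_X+\Delta)+M$ with $M$ nef and big, this is exactly Kawamata--Viehweg vanishing for the klt pair $(X,\Delta)$, and I would prove it in this dual form.

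Next I would pass to a log resolution $g\colon Y\to X$ of $(X,\Delta)$ and write $K_Y=g^\ast(K_X+\Delta)+\sum_i a_iE_i$, where $\bigcup_iE_i$ is simple normal crossing and every discrepancy satisfies $a_i>-1$ precisely because the pair is klt. The pullback $g^\ast M$ stays nef and big, and the round-up of $g^\ast M-\sum_i a_iE_i$ is arranged, using $a_i>-1$, so that its zeroth direct image recovers $\O_X(K_X+N)$ while all higher direct images vanish by relative (Grauert--Riemenschneider type) vanishing. The Leray spectral sequence then transports the problem upstairs to $Y$, reducing everything to vanishing on a smooth projective variety.

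Finally I would establish the smooth case: for a nef and big $\Q$-divisor $D$ on the smooth $Y$ with simple normal crossing fractional part, $\H^j(Y,\O_Y(K_Y+\lceil D\rceil))=0$ for $j>0$. The plan here is to invoke Kodaira's lemma to write $D\sim_\Q A+F$ with $A$ ample and $F$ effective, perturb by a small multiple of $F$ to keep the class ample while controlling the fractional part, and then run the cyclic covering trick: a branched cover $\pi\colon\widetilde Y\to Y$ clears the denominators, the sheaf of interest is isolated as a direct summand of $\pi_\ast$ of a line bundle, and classical Kodaira--Akizuki--Nakano vanishing on a resolution of $\widetilde Y$ finishes the argument. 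I expect the main obstacle to be precisely this final reduction --- matching the covering construction to the round-ups so that the correct summand is singled out, together with the relative vanishing needed for the descent step --- and this is exactly where the klt hypothesis (discrepancies strictly greater than $-1$) is indispensable.
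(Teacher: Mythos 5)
The paper does not actually prove this statement: it is imported verbatim as Theorem~2.70 of the cited reference (Koll\'ar--Mori) and used as a black box, so there is no in-paper argument to compare yours against. Judged on its own terms, your outline is the standard architecture for generalised Kodaira / Kawamata--Viehweg vanishing and is essentially how the cited source proceeds: pass to a log resolution, use the klt condition $a_i>-1$ to control the round-ups, kill the higher direct images by a relative vanishing, and settle the smooth simple-normal-crossings case via Kodaira's lemma, the cyclic covering trick, and Akizuki--Kodaira--Nakano.

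One step deserves a concrete warning. You open by applying Serre duality on $X$ to the reflexive, generally non-invertible sheaf $\O_X(-N)$, justified by ``klt $\Rightarrow$ rational $\Rightarrow$ Cohen--Macaulay''. Cohen--Macaulayness of $X$ alone is not enough: to pass from $\H^i(X,\O_X(-N))^\vee\cong\mathrm{Ext}^{n-i}(\O_X(-N),\omega_X)$ to the sheaf-level statement $\H^{n-i}(X,\O_X(K_X+N))^\vee$ you need the local Ext sheaves to vanish, i.e.\ you need the divisorial sheaf $\O_X(-N)$ itself to be (maximal) Cohen--Macaulay. That property of Weil divisorial sheaves on klt varieties is a genuine theorem, and in Koll\'ar--Mori it is deduced from the same circle of vanishing results --- including Theorem~2.70 itself --- so invoking it at the outset risks circularity. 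The reference sidesteps this by proving the vanishing of $\H^i(X,\O_X(-N))$ directly, combining the computation on the log resolution with \emph{local} duality upstairs rather than global Serre duality on the singular $X$. So either restructure your first step along those lines, or supply an independent argument that $\O_X(-N)$ is Cohen--Macaulay. The remainder of your plan --- the round-up bookkeeping, the vanishing of higher direct images, and the covering trick --- is the right track, and you correctly identify that as where the technical work lies.
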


\begin{rmk}\emph{Let $V$ and $W$ be algebraic varieties. Recall that any algebraic map $\pi\colon V\rightarrow W$ can be decomposed into finitely many varieties $V_i\subset V$ of varying dimension, on each of which $\pi$ restricts to a map with constant fibre dimension.}\end{rmk}

\begin{defi}\label{D(pi)}\emph{[\cite{stratified}~\S2.2] Define $D(\pi)$, the {\it measure of deviation} of $\pi: V\rightarrow W$, to be
\[D(\pi)=\sup_i\{\text{(the fibre dimension of $\pi$ in $V_i$) }-\text{(the codimension of $V_i$ in $V$)}\}\quad.\]}\end{defi}

\begin{theorem}\label{lefschetz}\emph{[Lefschetz hyperplane theorem, \cite{stratified}~\S2.2]} Let $\pi\colon V\rightarrow \C^N$ be a proper map of a purely $n$-dimensional (possibly singular) algebraic variety into complex affine space. Then $\H_i(V)=0$ for $i>n+D(\pi)$.\end{theorem}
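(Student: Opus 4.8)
The plan is to deduce the statement from stratified Morse theory applied to a proper distance function built from $\pi$. Choose a generic point $p\in\C^N$ and set $r(x)=\|\pi(x)-p\|^2\colon V\to\R_{\ge 0}$. Since $\pi$ is proper and the squared distance is proper and bounded below on $\C^N$, the function $r$ is proper and bounded below on $V$; every sublevel set $V_{\le c}=r^{-1}([0,c])$ is therefore compact, and $V$ is the increasing union of the $V_{\le c}$. Morse theory for such a function presents $V$ as homotopy equivalent to a CW complex assembled by attaching the local Morse data of $r$ at its critical values. It thus suffices to bound, at every critical point, the real dimension of the attached cells by $n+D(\pi)$; this forces $V$ to have the homotopy type of a CW complex of dimension $\le n+D(\pi)$, and hence $\H_i(V)=0$ for $i>n+D(\pi)$.

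First I would fix a Whitney stratification of $V$ refining the decomposition $V=\bigcup_i V_i$ of Definition~\ref{D(pi)} into locally closed pieces on which $\pi$ has constant fibre dimension. A Sard-type genericity argument, as developed in \cite{stratified}, shows that for $p$ outside a set of measure zero the function $r$ is a stratified Morse function: on each stratum $S$ the critical points of $r|_S$ are nondegenerate with distinct critical values, and the transversality condition of stratified Morse theory holds. For such $r$ the local Morse data at a critical point $q$ in a stratum $S$ factors as the product of tangential Morse data, governed by the index of the Hessian of $r|_S$, with normal Morse data determined by the complex-analytic structure of $V$ transverse to $S$ at $q$.

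The heart of the proof is the index estimate, where both the complex structure and the quantity $D(\pi)$ enter. Let $S$ be a stratum of complex codimension $c$ in $V$ (so $\dim_{\C}S=n-c$) on which $\pi$ has complex fibre dimension $f$; its contribution to $D(\pi)$ is $f-c$. Because $r$ factors through $\pi$, it is constant along the fibres of $\pi|_S$, so the critical set of $r|_S$ is a union of such fibres, each of real dimension $2f$, and the situation is Morse--Bott rather than Morse in the fibre directions. On the transverse (horizontal) directions, which map isomorphically onto the tangent space of the image $\pi(S)\subset\C^N$, I would invoke the Andreotti--Frankel lemma: since $\pi(S)$ is a complex submanifold of dimension $n-c-f$ and the negative-definite subspace of the Hessian of a distance-squared function on a complex submanifold is totally real, the index of $r|_S$ transverse to the fibres is at most $n-c-f$. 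Attaching the negative disk bundle over a critical fibre therefore contributes cells of real dimension at most $(n-c-f)+2f=n+(f-c)\le n+D(\pi)$.

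It remains to control the normal Morse data, and this is the step I expect to be the main obstacle. Unlike the smooth case, the cells attached at $q$ are (tangential Morse data)$\,\times\,$(normal Morse data), and one must check that incorporating the normal data does not raise the real dimension of the attached cells above $n+D(\pi)$. I would handle this with the complex-analytic description of normal Morse data in \cite{stratified}: the lower half-link of a complex stratum is itself built, via the complex distance function on a normal slice, from cells whose dimensions are bounded by the complex codimension, so that the normal contribution is already absorbed into the codimension bookkeeping used above. Verifying this compatibility of the tangential count with the stratified normal data, and checking that a single generic $p$ simultaneously makes $r$ Morse on every stratum and on its image, is the delicate part; once it is in place, the per-critical-point bounds assemble into the CW-dimension estimate and give $\H_i(V)=0$ for $i>n+D(\pi)$. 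As a sanity check, when $\pi$ is a closed embedding every stratum has $f=0$, so $D(\pi)=0$ and one recovers the classical Andreotti--Frankel vanishing $\H_i(V)=0$ for $i>n$.
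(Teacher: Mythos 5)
First, a point of reference: the paper does not prove this statement at all. It is quoted, statement only, from \cite{stratified}~\S2.2 and then used as a black box in the proof of Lemma~\ref{u-homology}, so there is no internal argument to compare yours against; what you have written is an attempt to reconstruct the proof from the source. Your overall strategy is indeed the right family of ideas (stratified Morse theory for $r=\|\pi(x)-p\|^2$ with $p$ generic, Andreotti--Frankel index bounds on strata, normal Morse data controlled by complex codimension, CW-dimension bound implying homology vanishing), but the two steps you explicitly defer are exactly where the proof lives, and as written your dimension count does not close.

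The first gap: $r$ is constant on every fibre of $\pi$, so on any stratum $S$ where $\pi|_S$ has positive fibre dimension the restriction $r|_S$ has non-isolated critical points, and no genericity of $p$ can repair this. The factorisation of local Morse data into tangential times normal data that you invoke is established in \cite{stratified} only for nondegenerate isolated critical points with distinct critical values; a ``stratified Morse--Bott'' theory, with critical manifolds that are whole fibres and normal data varying along them, is a substantial extension you would have to construct, not a routine citation. The second and more serious gap is quantitative. Granting the Morse--Bott picture, the cells attached at a critical fibre in a stratum $S$ of complex codimension $c$ and fibre dimension $f$ have dimension at most (tangential)\,$+$\,(normal). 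Your tangential count $(n-c-f)+2f$ is plausible, but the normal Morse data of a stratum of complex codimension $c$ in a complex analytic variety contributes up to $c$ further dimensions; in the embedded case $f=0$ the budget $\lambda_\tau+c\le(n-c)+c=n$ is exactly saturated, so there is no slack into which the normal contribution could be ``absorbed.'' Adding it here gives $n+f$, not $n+f-c$, so your argument as stated proves only $\H_i(V)=0$ for $i>n+\sup_i(\text{fibre dimension on }V_i)$, which is strictly weaker than the theorem whenever positive-dimensional fibres occur over positive-codimension strata (for instance any semismall resolution, where $D(\pi)=0$ but fibres may be large). Recovering the $-c$ is the whole content of the theorem, and one expects to have to use the structure of $\pi$ as a stratified map --- running the Morse theory of $|z-p|^2$ on the image and controlling preimages of the Morse data via Thom--Mather local triviality --- rather than treating $r$ as a degenerate Morse function upstairs. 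Your closing sanity check for the embedding case is correct, but it is also exactly the case in which both difficulties disappear.
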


\begin{lm}\label{u-homology} Let $X\subset\F$ be a hypersurface defined by
\[\left(\begin{array}{c}-e\\4\end{array}\right)\subset\left(\begin{array}{cccccc}
1&1&-\alpha_1&-\alpha_2&-\alpha_3&-\alpha_4\\0&0&\beta_1&\beta_2&\beta_3&\beta_4\end{array}\right)\qquad,\]
 where the variables are in order $u=v\prec~x_1\preceq~x_2\preceq~x_3\preceq~x_4$ and $\{\beta_1,\,\beta_2,\,\beta_3,\beta_4\}=\{1,\,1,\,1,\,2\}$. Suppose $\F_i$ and $X_i$ are birational models of $\F$ and $X$ obtained by small modifications as in Theorem~\ref{thm!BZ} and Definition~\ref{Xi}. Let $\U_i=\F_i-X_i$ be the complement of each $X_i$ in $\F_i$. Consider the point $x=(-e,4)\in\Z^2$ and recall from Proposition~\ref{generators-of-mob} that $\MMob(\F)$ is a cone in $\R^2=\Z^2\otimes\R$ with the same copy of $\Z^2$. If $X\in\Int(\MMob(\F))$, then $\H_5(\U_i)=\H_6(\U_i)=0$ for some $i$.\end{lm}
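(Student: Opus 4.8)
The plan is to compute the homology of $\U_i=\F_i-X_i$ by exhibiting an explicit algebraic map from $\U_i$ to complex affine space and invoking the stratified Lefschetz theorem (Theorem~\ref{lefschetz}) to force the vanishing of $\H_5$ and $\H_6$. First I would observe that $\U_i$ is the complement in the rank-two toric variety $\F_i$ of the hypersurface $X_i$, so $\U_i$ is an open subvariety of a $4$-fold, i.e. purely of dimension $4$. The key idea is that $X_i=(f=0)$ is cut out by a single section of the divisor class $D=4M-eL$, so the complement $\U_i$ maps naturally to an affine space via the sections of a suitable multiple of $D$: concretely, I would use the Cox-ring monomials of bi-degree $(-e,4)$ (equivalently $(\omega,4)$) to build a proper map $\pi\colon\U_i\to\C^N$ whose coordinates are the ratios of these monomials to $f$. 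Because $f$ is nowhere zero on $\U_i$, these ratios are regular functions there, and the hypothesis $X\in\Int(\MMob(\F))$ guarantees that the corresponding linear system is big and base-point-free on the relevant model $\F_i$, which is what makes $\pi$ proper.

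The next step is to control $D(\pi)$, the measure of deviation of this map. Since $\pi$ is built from a big linear system it is generically finite, so on the open dense stratum the fibre dimension is $0$ and the contribution to $D(\pi)$ is $0$. The remaining strata are the positive-dimensional fibres of $\pi$, which sit over the exceptional loci of the $2$-ray-game modifications $\Psi_i$ (the flipped/flopped curves and surfaces) together with the locus where the monomials fail to separate points. On each such stratum $V_j$ I would bound the fibre dimension against the codimension and check that (fibre dimension)~$-$~(codimension)~$\le 4-6=-2$; since $\U_i$ has dimension $4$, Theorem~\ref{lefschetz} then yields $\H_i(\U_i)=0$ for $i>4+D(\pi)$, and $D(\pi)\le -2$ gives $\H_5(\U_i)=\H_6(\U_i)=0$ as required. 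The condition that $X$ lies in the \emph{interior} of $\MMob(\F)$ is exactly what keeps these exceptional strata small (of high codimension), because it ensures the associated divisor is strictly mobile and big rather than sitting on a wall where a component gets contracted to high-dimensional image.

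The main obstacle I anticipate is precisely this stratification-and-codimension bookkeeping: one must verify uniformly, across all the models $\F_i$ appearing in the various $2$-ray games of Theorem~\ref{thm!BZ}, that no stratum $V_j$ violates the bound $D(\pi)\le -2$. The delicate strata are those lying over the flipping/flopping loci, where $\pi$ can contract a positive-dimensional set; here I would use the explicit description of $\Psi_i$ as a toric modification (contracting a product of weighted projective lines, as laid out case-by-case in Section~\ref{geometry-main-theorem}) to read off both the fibre dimension and the codimension directly from the Cox-ring data. The phrase ``for some $i$'' in the statement gives welcome flexibility: it suffices to exhibit one model $\F_i$ on which the map $\pi$ is well-behaved, so I would choose the model at which the divisor $D$ becomes semiample (guaranteed to exist because $\F$ is a Mori dream space and $X\in\Int(\MMob(\F))$ lies in the interior of one of the chambers $M_i$), on which $\pi$ is genuinely a morphism with the required fibre-dimension control.
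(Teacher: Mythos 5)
Your proposal follows essentially the same route as the paper: the map $\pi$ you build from the bi-degree $(-e,4)$ monomials divided by $f$ is exactly the restriction to $\U_i=\F_i-X_i$ of the paper's $\Phi_{|D|}$ (with $X_i$ the preimage of a hyperplane section, so the complement lands in an affine chart and $\pi$ is proper), and the paper likewise picks the model $\F_i$ on which $D$ is semiample via the chamber structure of $\MMob(\F)$ before applying the stratified Lefschetz theorem. The one slip is your target bound ``(fibre dimension) $-$ (codimension) $\le 4-6=-2$'': this is neither needed nor generally achievable, since over the contracted loci the fibre dimension can be $2$ with codimension exactly $2$; the correct and sufficient estimate, which is what the paper verifies, is $D(\pi)\le 2-2=0$, giving $\H_i(\U_i)=0$ for $i>4+0=4$ and hence $\H_5(\U_i)=\H_6(\U_i)=0$.
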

 
\begin{proof} Consider the map $\Phi_{|D|}\colon\F\rightarrow\P^N$ defined by the linear system of the divisor $D~=~4M-eL$ and assume $D\in\MMob(\F)$. By Proposition~\ref{generators-of-mob}, $\NNE^1(\F)$ has the following decomposition:
\[\xygraph{
!{(0,0) }="a"
!{(2,0) }*+{D_u,D_v}="b"
!{(0,1.8) }*+{D_1}="c"
!{(-0.5,1.8) }*+{D_2}="d"
!{(-1.3,1.8) }*+{D_3}="e"
!{(-2,1.8) }*+{D_4}="f"
"a"-"b" "a"-"c" "a"-"d" "a"-"e" "a"-"f"
} \quad, \]
where the rays are labelled by divisors that lie on them away from the origin.

From geometric invariant theory we have the following characterisation (possibly after taking a positive multiple of $D$):
\begin{enumerate}[(i)]
\item $\Phi_{|D|}$ is an embedding of $\F_i$ if $D\in\Int\left<D_i,D_{i+1}\right>$, where $D_i$ and $D_{i+1}$ do not lie on the same ray. 
\item $\Phi_{|D|}$ is a small contraction from $\F_i$ if $D=aD_i$ for some positive integer $a$ and $D_i\in\Int(\MMob(\F))$.
\item $\Phi_{|D|}$ is an extremal contraction of divisorial or fibre type otherwise.
 \end{enumerate}
 
 Suppose $D\in\Int(\MMob(\F))$; in particular it is in one of the cases $(i)$ or $(ii)$ above.\\
 
 Let $\U_i=\F_i-X_i$, where $i$ is the integer for which $(i)$ or $(ii)$ above is satisfied. Suppose $\varphi\colon\U_i\rightarrow\C^N$ be the restriction of $\Phi_{|D|}$ to $\U_i$. The map $\varphi$ is proper because $\Phi_{|D|}$ is a projective morphism and $X_i$ is the complete preimage of a hyperplane section of the target variety. Since this map contracts at most a 2-dimensional subspace of $\F_i$ and is isomorphism everywhere else, the codimension of every $V_j$ in Definition~\ref{D(pi)} is at least 2, while the fibre dimension is at most $2$. Hence $D(\varphi)\leq 0$ so by Theorem~\ref{lefschetz} we conclude that $\H_5(\U_i)=\H_6(\U_i)=0$. Note that $\dim_{\C}(\U_i)=4$ and $\dim_\R(\U_i)=8$.\end{proof}

\begin{cor}\label{H2(U)=0} $\H^2_c(\U_i)=\H^3_c(\U_i)=0$.\end{cor}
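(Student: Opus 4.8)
The plan is to deduce this corollary from the homology vanishing $\H_5(\U_i)=\H_6(\U_i)=0$ of Lemma~\ref{u-homology} by invoking Poincaré--Lefschetz duality. The space $\U_i=\F_i-X_i$ is a complex algebraic variety of complex dimension $4$, hence of real dimension $8$, and it carries the canonical orientation coming from its complex structure. For an oriented manifold $M$ of real dimension $m$ one has the duality isomorphism $\H^k_c(M)\cong\H_{m-k}(M)$ between compactly supported cohomology and ordinary homology. First I would set $m=8$ and read off the relevant pair of indices: $\H^2_c(\U_i)\cong\H_{6}(\U_i)$ and $\H^3_c(\U_i)\cong\H_{5}(\U_i)$. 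Both right-hand groups vanish by Lemma~\ref{u-homology}, and the corollary follows immediately.

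The one point that requires care is that $\U_i$ is not literally a smooth manifold: the singularities of $\F_i$ are the transverse $\frac{1}{2}(1,1,1)$ points lying along the curve $\Gamma_t$, and in general $X_i$ meets $\Gamma_t$ in only finitely many points, so a punctured portion of $\Gamma_t$ survives inside $\U_i$ as a locus of cyclic quotient singularities. To justify the duality isomorphism I would work with $\Q$-coefficients and observe that a cyclic quotient singularity is a rational homology manifold; consequently $\U_i$ is a $\Q$-homology manifold of real dimension $8$, for which Poincaré--Lefschetz duality over $\Q$ holds in exactly the stated form $\H^k_c(\U_i;\Q)\cong\H_{8-k}(\U_i;\Q)$. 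Since the vanishing downstream (in the Picard-group computation of Section~\ref{Pic-section}) is only needed rationally, restricting to $\Q$-coefficients is harmless.

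The main obstacle is therefore not any computation but the verification that the ambient duality theorem applies despite the quotient singularities; once the rational homology manifold property of $\U_i$ is recorded, the argument is purely formal bookkeeping of degrees. I would phrase the conclusion as: by Poincaré duality $\H^2_c(\U_i)\cong\H_6(\U_i)=0$ and $\H^3_c(\U_i)\cong\H_5(\U_i)=0$, where the vanishing is that of Lemma~\ref{u-homology}.
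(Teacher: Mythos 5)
Your argument is exactly the paper's: the corollary is deduced from the vanishing $\H_5(\U_i)=\H_6(\U_i)=0$ of Lemma~\ref{u-homology} via Poincar\'e duality on the $8$-real-dimensional space $\U_i$, with the correct index bookkeeping $\H^2_c\cong\H_6$ and $\H^3_c\cong\H_5$. Your additional remark that the transverse $\frac{1}{2}(1,1,1)$ quotient singularities force one to work with $\Q$-coefficients (where $\U_i$ is a rational homology manifold) is a point the paper leaves implicit, but it does not change the approach.
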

\begin{proof} This follows from Lemma~\ref{u-homology} and Poincar\'e duality .\end{proof}

\begin{lm}\label{H2F} Let $\F$ be the ambient toric variety of any family in Table~\ref{table!dP2list} except 1,2 and 3. Then $\H^2(\F_i)=\Z^2$ for all models $\F_i$ obtained by flips, flops or antiflips from $\F$.\end{lm}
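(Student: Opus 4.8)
The plan is to treat each $\F_i$ purely toric-theoretically. By Theorem~\ref{thm!BZ} the models obtained by flips, flops or antiflips are exactly the rank two toric varieties $\F_1,\dots,\F_r$ appearing in the $2$-ray link of $\F$, and each is a complete simplicial (i.e.\ $\Q$-factorial) toric fourfold carrying the \emph{same} six ray generators as $\F$; only the maximal cones, equivalently the chamber of the GIT quotient, change. First I would extract the two structural consequences of this. Since $\F_i$ is a complete simplicial toric variety with $\#\Sigma(1)=6$ and $\dim\F_i=4$, the standard description of the rational cohomology of a complete simplicial toric variety shows it is concentrated in even degrees with $\dim_\Q\H^2(\F_i,\Q)=\#\Sigma(1)-\dim\F_i=6-4=2$; thus $\H^2(\F_i,\R)\cong\R^2$, consistent with $\rho(\F_i)=2$.

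The second ingredient is torsion-freeness. The class group $\Cl(\F_i)$ depends only on the ray data, which is common to every model, and equals the free group $\Z^2$ recorded for $\F$; freeness of $\Cl(\F_i)$ is equivalent to the six ray generators spanning the cocharacter lattice $N\cong\Z^4$. For a complete toric variety whose rays span $N$ one has $\pi_1(\F_i)\cong N/\langle u_\rho\rangle=0$, so $\F_i$ is simply connected and $\H_1(\F_i,\Z)=0$. The universal coefficient theorem then gives $\H^2(\F_i,\Z)\cong\Hom(\H_2(\F_i,\Z),\Z)\oplus\mathrm{Ext}^1(\H_1(\F_i,\Z),\Z)$, whose $\mathrm{Ext}$ term vanishes; hence $\H^2(\F_i,\Z)$ is free of rank $b_2(\F_i)=2$. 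Combining the two steps yields $\H^2(\F_i,\Z)\cong\Z^2$. The families $1,2,3$ are excluded because their links contain no intermediate small modification, so the assertion there is vacuous.

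The cohomological deductions above are formal once their hypotheses hold, so I expect the genuine work to be verifying these hypotheses uniformly: namely that \emph{every} model in the chain really is a complete simplicial toric variety of Picard number $2$ with all six rays surviving (no ray contracted, no drop in Picard number) and with common free class group $\Z^2$. This is precisely the content of Theorem~\ref{thm!BZ} together with the explicit chamber analysis performed family-by-family in~\ref{geometry-main-theorem}, where each $\Psi_i$ is identified as an isomorphism in codimension one between such varieties. As a self-contained alternative for the rank statement, I would note that each $\Psi_i\colon\F_{i-1}\dashrightarrow\F_i$ is an isomorphism away from loci of complex codimension at least $2$ (real codimension at least $4$); excising these and using that simplicial toric varieties are rational homology manifolds gives $\H^2(\F_{i-1},\Q)\cong\H^2(\F_i,\Q)$, propagating $\H^2(\F,\Q)\cong\Q^2$ along the entire link and recovering rank $2$ without invoking the Betti-number formula.
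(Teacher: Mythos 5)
Your proof is correct, but it takes a genuinely different route from the paper's. The paper argues via the exponential sequence $0\to\Z\to\O_{\F}\to\O_{\F}^*\to 0$: it produces a model $\F_i$ on which $-K_{\F_i}$ is nef and big, applies the generalised Kodaira vanishing theorem (Theorem~\ref{vanishing}) together with Serre duality to get $\H^1(\F_i,\O_{\F_i})=\H^2(\F_i,\O_{\F_i})=0$, propagates this vanishing to all models using rationality of singularities, and then reads off $\H^2(\F_i,\Z)\cong\H^1(\F_i,\O^*_{\F_i})\cong\Pic(\F_i)\cong\Z^2$. You instead work purely toric-combinatorially: even-degree concentration and $b_2=\#\Sigma(1)-\dim=2$ for a complete simplicial toric fourfold, simple connectivity from the rays spanning $N$ (equivalently freeness of $\Cl(\F_i)=\Z^2$), and the universal coefficient theorem to rule out torsion. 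Both arguments are sound, and the structural inputs you isolate (all six rays survive each wall crossing, Picard number stays $2$) are exactly what Theorem~\ref{thm!BZ} supplies, since the unstable loci $V(I_j)$ have codimension at least $2$ in $\C^6$ for the intermediate chambers. What each approach buys: your toric argument is more elementary, avoids vanishing theorems entirely, and in fact does not need to exclude Families 1, 2 and 3 --- so your closing remark that the claim is ``vacuous'' there is slightly off; the statement still makes sense for $\F_0=\F$ and your method proves it, whereas the paper's exclusion reflects that its proof requires a model with $-K_{\F_i}$ nef and big (and that those families are handled separately later anyway). The paper's method, on the other hand, has the virtue of running in exact parallel with the subsequent argument for the hypersurfaces $X_i$ (Lemma~\ref{Xvanishing} and Theorem~\ref{piX-general}), where no toric combinatorics is available, so the same template is reused; your approach would still need that analytic argument for the $X_i$ themselves.
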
 

\begin{proof} From the short exact sequence
\[0\rightarrow\Z\rightarrow\O_\F\rightarrow\O^*_\F\rightarrow 0\]
one constructs the long exact sequence
\[\cdots\rightarrow\H^1(\F,\Z)\rightarrow\H^1(\F,\O_\F)\rightarrow\H^1(\F,\O_\F^*)\rightarrow\H^2(\F,\Z)\rightarrow\H^2(\F,\O_\F)\rightarrow\cdots\quad.\]

On the other hand, for any $\F$ in Families~4,\dots ,12 in Table~\ref{table!dP2list} there exists a birational model $\F_i$, obtained by some flips (flops or antiflips) for which $-K_{\F_i}$ is nef and big. Applying Theorem~\ref{vanishing} for the pair $(\F_i,0)$ and divisor $-K_{\F_i}$ gives $\H^j(\F_i,\O_{\F_i}(-K_{\F_i}))=0$ for all $j<4$. This vanishing together with Serre duality implies 
\[\H^1(\F_i,\O_{\F_i})=\H^2(\F_i,\O_{\F_i})=0\qquad.\]
The fact that $\F_i$ have rational singularities ensures that the vanishing above holds for all models $\F_i$. 

Of course $\Pic(\F_i)\cong\Z^2$ for all models $\F_i$ obtained by flips, flops or antiflips from $\F$. Using the fact that $\H^1(\F_i,\O^*_{\F_i})\cong\Pic(\F_i)$, the exact sequence above, together with the vanishing statements that we proved imply $\H^2(\F_i)\cong\Z^2$.
\end{proof}

\begin{prop}\label{H2=Z2} Let $X\subset\F$ be a hypersurface defined by $f\in\H^0(\F,D)$, where $D=4M-eL$ and $(-e,4)\in\Int(\MMob(\F))$. If $\F$ is the abient space of one of the families in Table~\ref{table!dP2list} except families 1,2 and 3, then $\H^2(X_i)\cong\Z^2$ for $X_i\subset\F_i$, where $\F_i$ is the model specified in Lemma~\ref{u-homology}.\end{prop}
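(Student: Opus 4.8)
The plan is to deduce the result from the long exact sequence of the pair $(\F_i,X_i)$, feeding in the two facts already established for the model $\F_i$ specified in Lemma~\ref{u-homology}: Corollary~\ref{H2(U)=0}, which gives $\H^2_c(\U_i)=\H^3_c(\U_i)=0$, and Lemma~\ref{H2F}, which gives $\H^2(\F_i)\cong\Z^2$. Since $\F_i$ is a projective rank-two toric variety and $X_i$ is a hypersurface in it, both are compact, so on them compactly supported cohomology coincides with ordinary cohomology, while $\U_i=\F_i-X_i$ is the open complement of the closed subvariety $X_i$. This is exactly the geometric configuration in which the complement controls the difference between $\H^*(\F_i)$ and $\H^*(X_i)$.

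First I would write down the comparison sequence. Using the identification $\H^k(\F_i,X_i)\cong\H^k_c(\U_i)$, valid because $\F_i/X_i$ is the one-point compactification of $\U_i$, the long exact sequence of the pair reads
\[
\cdots\to\H^k_c(\U_i)\to\H^k(\F_i)\to\H^k(X_i)\to\H^{k+1}_c(\U_i)\to\cdots\quad,
\]
where the middle arrow is restriction. Extracting the degree $k=2$ segment gives
\[
\H^2_c(\U_i)\to\H^2(\F_i)\to\H^2(X_i)\to\H^3_c(\U_i)\quad.
\]
By Corollary~\ref{H2(U)=0} the two outer terms vanish, so the restriction map $\H^2(\F_i)\to\H^2(X_i)$ is an isomorphism, and Lemma~\ref{H2F} identifies the source with $\Z^2$. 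Hence $\H^2(X_i)\cong\Z^2$, as claimed.

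The hard part will be justifying the comparison sequence rigorously, since $\F_i$ and $X_i$ are typically singular. The identification $\H^k(\F_i,X_i)\cong\H^k_c(\U_i)$ is standard for good pairs of locally compact spaces, and complex algebraic varieties — even singular ones — admit the triangulations and neighbourhood retracts that make $(\F_i,X_i)$ a good pair, so that $\F_i/X_i$ genuinely computes $\H^*_c(\U_i)$; equivalently one may run the sheaf sequence $0\to j_!\Z_{\U_i}\to\Z_{\F_i}\to i_*\Z_{X_i}\to 0$ and use compactness of $\F_i$. Two bookkeeping points must be checked: that the models $\F_i$ produced by the flips, flops and anti-flips of Theorem~\ref{thm!BZ} remain projective (so $\F_i$, $X_i$ are compact and $\H^*_c=\H^*$ on them), and that the index $i$ used here is precisely the one furnished by Lemma~\ref{u-homology}, so that the vanishing of $\H^2_c(\U_i)$ and $\H^3_c(\U_i)$ applies to the same model. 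Once these are in place the argument is a two-line diagram chase, with all the substantive content already carried by the earlier Lefschetz and Kodaira-vanishing computations.
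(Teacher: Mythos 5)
Your proposal is correct and follows essentially the same route as the paper: the paper's proof consists precisely of the exact sequence $\cdots\rightarrow\H^2_c(\U_i)\rightarrow\H^2(\F_i)\rightarrow\H^2(X_i)\rightarrow\H^3_c(\U_i)\rightarrow\cdots$ combined with Corollary~\ref{H2(U)=0} and Lemma~\ref{H2F}. The additional care you take in justifying the comparison sequence for singular varieties and in matching the index $i$ to the model of Lemma~\ref{u-homology} is sound and only makes explicit what the paper leaves implicit.
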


\begin{proof} Together with Corollary~\ref{H2(U)=0}, the exact sequence
\[\cdots\rightarrow\H^2_c(\U_i)\rightarrow\H^2(\F_i)\rightarrow\H^2(X_i)\rightarrow\H^3_c(\U_i)\rightarrow\cdots\]
implies $\H^2(\F_i)\cong\H^2(X_i)$. The proof follows from Lemma~\ref{H2F}.
\end{proof}

\begin{lm}\label{Xvanishing} For a general $X$ in Table~\ref{table!dP2list}, $\H^1(X,\O_X)=\H^2(X,\O_X)=0$.\end{lm}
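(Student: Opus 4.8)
The plan is to establish the stronger vanishing $\H^j(X,\O_X)=0$ for every $j>0$ and read off the cases $j=1,2$. The two ingredients are that $\H^j(-,\O)$ is invariant under the small modifications appearing in the link (because all the varieties involved have rational singularities), together with the generalised Kodaira vanishing of Theorem~\ref{vanishing}, which I would apply on the $4$-dimensional toric models $\F_i$ rather than on the $3$-fold $X$ itself (on $X$ it is useless, since $-K_X$ is not big for a del Pezzo fibration).

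First I would reduce to a convenient model. Every $X$ in Table~\ref{table!dP2list} has terminal, hence rational, singularities, and so does each $X_i$ in its link, since the maps $\psi_i$ are isomorphisms in codimension one between $\Q$-factorial terminal $3$-folds. Choosing a common resolution $W$ of $X\dashrightarrow X_i$, with proper birational morphisms $\pi_0\colon W\to X$ and $\pi_i\colon W\to X_i$, rationality gives $R^q(\pi_0)_*\O_W=R^q(\pi_i)_*\O_W=0$ for $q>0$, so both Leray spectral sequences degenerate and
\[\H^j(X,\O_X)\cong\H^j(W,\O_W)\cong\H^j(X_i,\O_{X_i})\]
for all $j$. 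It therefore suffices to compute $\H^j$ on the model $X_i\subset\F_i$ on which the hypersurface class $D=4M-eL$ is nef and big. Following the geometric invariant theory description in the proof of Lemma~\ref{u-homology}, this is the chamber on which $\Phi_{|D|}$ is an embedding, so that $D$ is ample, whenever $D\in\Int(\MMob(\F))$; for the families whose link is a single divisorial contraction one instead takes $\F_i=\F$, on which the relevant multiple of $D$ is semiample and big. In all cases $D$ is nef and big on $\F_i$.

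Second I would run the computation on $\F_i$. Since $\F_i$ is a complete toric variety, $\H^q(\F_i,\O_{\F_i})=0$ for all $q>0$. Since $(\F_i,0)$ is klt and $D$ is a nef and big $\Q$-Cartier Weil divisor, Theorem~\ref{vanishing} with $M=N=D$ and $\Delta=0$ gives $\H^q(\F_i,\O_{\F_i}(-D))=0$ for $q<\dim\F_i=4$. Feeding these into the ideal sheaf sequence
\[0\to\O_{\F_i}(-D)\to\O_{\F_i}\to\O_{X_i}\to 0\]
of the hypersurface $X_i=(f=0)$ and passing to cohomology forces $\H^1(X_i,\O_{X_i})\cong\H^2(\F_i,\O_{\F_i}(-D))=0$ and $\H^2(X_i,\O_{X_i})\cong\H^3(\F_i,\O_{\F_i}(-D))=0$; combined with the first step this yields $\H^1(X,\O_X)=\H^2(X,\O_X)=0$.

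The delicate points are the two identifications at the singular locus. Because $D$ need not be Cartier along $\Gamma_t$, the sheaf $\O_{\F_i}(-D)$ in the sequence above must be read as the reflexive rank one sheaf, and one has to verify that multiplication by the section $f$ identifies it with the ideal sheaf of $X_i$; for general $f$ this holds since $X_i$ meets $\Sing\F_i$ only in finitely many quotient points, but it is the step I would check most carefully. One must also make sure that $D$ really is nef and big on some toric model in the fibration-type cases, where the terminal step of the $2$-ray game is not birational. As an independent cross-check, the whole statement also follows from rational connectedness: each $X$ is a degree~$2$ del Pezzo fibration over $\P^1$, hence rationally connected, so any smooth model carries no global $j$-forms and $\H^j(X,\O_X)=0$ for $j>0$ by Hodge symmetry; I would nonetheless prefer the argument above, since it relies only on tools already developed in the paper.
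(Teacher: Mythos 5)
Your argument is correct, but it takes a genuinely different route from the paper's. The paper stays on the $3$-fold: it picks the small modification $X_i$ on which $-K_{X_i}$ is nef and big (this exists precisely because Lemma~\ref{lem!-k} forces $-K_X\in\Int(\MMob(X))$ for every family in Table~\ref{table!dP2list}), applies Theorem~\ref{vanishing} to the klt pair $(X_i,0)$ with $N=M=-K_{X_i}$, and converts $\H^j(X_i,\O_{X_i}(K_{X_i}))=0$ for $j<3$ into $\H^1=\H^2=0$ by Serre duality, finishing with the same transfer across small modifications via rational singularities that you use. Your premise for avoiding this — that $-K_X$ cannot be big for a del Pezzo fibration — is false here: $-K_X$ lies in the interior of the full-dimensional cone $\MMob(X)\subset\NNE^1(X)\cong\R^2$, hence is big, and is nef on the right chamber model; so the direct $3$-fold argument does go through and is what the paper does. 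What your route buys instead is independence from the anticanonical geometry of $X$: you only need $D=4M-eL$ to be nef and big on some toric model $\F_i$, plus $\H^{>0}(\F_i,\O_{\F_i})=0$ for a complete toric variety and the ideal-sheaf sequence. The price is the ambient bookkeeping you flag, though your main worry dissolves: since $D$ has fibre-degree $4$ and the local class group along $\Gamma_t$ is $\Z/2$ generated by $M$, the divisor $D$ is in fact Cartier in a neighbourhood of $\Sing\F_i$, so $\O_{\F_i}(-D)\to\O_{\F_i}$ really is the usual ideal-sheaf sequence with no reflexive-hull subtlety. Your rational-connectedness cross-check is also valid and is arguably the shortest proof of the statement, though, as you say, it steps outside the toolkit the paper sets up.
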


\begin{proof} For any such $X$ there exists a model $X_i$ obtained by some flips, flops or antiflips from $X$ such that $-K_{X_i}$ is nef and big on $X_i$. Considering the pair $(X_i,0)$, which is a klt pair as $X_i$ is terminal, and applying Theorem~\ref{vanishing} gives $\H^j(X_i,\O_X(-K_{X_i}))=0$ for all $j<3$. This together with Serre duality implies $\H^1(X_i,\O_{X_i})=\H^2(X_i,\O_{X_i})=0$. The rationality of singularities of $X_i$ allows one to lift this vanishing to all $X_k$. In particular, $\H^1(X,\O_X)=\H^2(X,\O_X)=0$.\end{proof}

\begin{theorem}\label{piX-general} Let $X\subset\F$ be a general $dP_2\slash\P^1$ in one of the families in Table~\ref{table!dP2list} then $\Pic(X)\cong\Z^2$.\end{theorem}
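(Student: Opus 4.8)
The plan is to combine the two cohomological inputs just established --- the vanishing of Lemma~\ref{Xvanishing} and the computation of Proposition~\ref{H2=Z2} --- through the exponential sheaf sequence. On $X$ the sequence $0\to\Z\to\O_X\to\O_X^*\to 0$ induces
\[
\H^1(X,\O_X)\to\Pic(X)\to\H^2(X,\Z)\to\H^2(X,\O_X),
\]
where I have used $\H^1(X,\O_X^*)\cong\Pic(X)$. By Lemma~\ref{Xvanishing} the two flanking terms vanish, so the central arrow $\Pic(X)\to\H^2(X,\Z)$ is an isomorphism. The whole problem is thereby reduced to showing $\H^2(X,\Z)\cong\Z^2$.

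For the families $4,\dots,13$ this is now immediate once we transport Proposition~\ref{H2=Z2} from the distinguished model $X_i$ of Lemma~\ref{u-homology} back to $X=X_0$. The intermediate maps $\psi_1,\dots,\psi_i$ are isomorphisms in codimension one between threefolds, hence restrict to isomorphisms of the open sets obtained by deleting a closed subset of complex dimension at most one (the flopping, flipping and antiflipping curves). Such a subset has real codimension at least four, so the local cohomology supported on it vanishes in degrees below four; restriction to the complement is therefore an isomorphism on $\H^2(-,\Z)$ at each step. Composing these isomorphisms gives $\H^2(X,\Z)\cong\H^2(X_i,\Z)$, which is $\Z^2$ by Proposition~\ref{H2=Z2}, and hence $\Pic(X)\cong\Z^2$.

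Families $1$, $2$ and $3$ were deliberately excluded above, and they require a separate treatment because their link ends in a genuine divisorial contraction rather than a small modification: the divisor contracted by $\Phi^\prime$ has complex codimension one, the measure of deviation $D(\varphi)$ of the corresponding complement map becomes positive, and so the argument of Lemma~\ref{u-homology} no longer forces $\H_5(\U)=0$. For these three families I would instead argue from the target Fano model $Y$ --- namely $\P(1,1,1,2)$, $Y_4\subset\P(1,1,1,2,2)$ and $Y_4\subset\P(1,1,1,1,2)$. Each such $Y$ is a $\Q$-factorial terminal Fano threefold of Picard rank one, so $\Pic(Y)\cong\Z$ by the Lefschetz theorem for quasi-smooth weighted hypersurfaces. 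The morphism $\varphi^\prime\colon X\to Y$ is a divisorial contraction with irreducible exceptional divisor $E$, whence $\Pic(X)\cong\varphi^{\prime*}\Pic(Y)\oplus\Z[E]\cong\Z^2$.

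The step I expect to carry the real weight is this separate analysis of families $1$, $2$, $3$: one must confirm that each target $Y$ genuinely has Picard rank one, so that the blow-up formula closes at $\Z^2$ and not at something larger, and that $\varphi^\prime$ is a single irreducible divisorial contraction rather than a contraction of several components. The transfer in the second paragraph, by contrast, is essentially formal, once one observes that all the birational surgery in families $4,\dots,13$ takes place along curves and so leaves $\H^2(-,\Z)$ untouched.
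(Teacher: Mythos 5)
Your proposal is correct and follows essentially the same route as the paper: the exponential sheaf sequence combined with Lemma~\ref{Xvanishing} and Proposition~\ref{H2=Z2} for families $4$--$13$, and the observation that families $1$--$3$ are blow-ups of Picard-rank-one Fano threefolds. The only (immaterial) difference is that the paper applies the exponential sequence on the distinguished model $X_i$ and then transfers $\Pic$ back to $X$ using the codimension-one isomorphism, whereas you apply it on $X$ itself and transfer $\H^2(-,\Z)$ across the small modifications.
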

\begin{proof} Let $X$ be a general $dP_2\slash\P^1$ in one of the families of Table~\ref{table!dP2list} except families 1, 2 and 3. By Proposition~\ref{H2=Z2}, $\H^2(X_i)\cong\Z^2$ for some model $X_i$ obtained by some flips, flops or antiflips from $X$. On the other hand, Lemma~\ref{Xvanishing} implies $\H^1(X_i,\O_{X_i})=\H^2(X_i,\O_{X_i})=0$. Applying this to the exact sequence
\[\cdots\rightarrow\H^1(X_i,\Z)\rightarrow\H^1(X_i,\O_{X_i})\rightarrow\H^1(X_i,\O_{X_i}^*)\rightarrow\H^2(X_i,\Z)\rightarrow\H^2(X_i,\O_{X_i})\rightarrow\cdots\]
enables one to see $\H^1(X_i,\O_{X_i}^*)\cong\H^2(X_i,\Z)$; hence $\Pic(X_i)\cong\Z^2$. The fact that $X_i$ is isomorphic to $X$ in codimension 1 shows that $\Pic(X)\cong\Z^2$.\\

In order to finish the proof, we must show that $\Pic(X)\cong\Z^2$ for a general $X$ in families 1, 2 and 3. But we know that any such $X$ is obtained by a blow up of a Fano 3-fold with Picard rank 1, which completes the proof.\end{proof}

\section{Failing cases}\label{proof-main-theorem}

In this section, we show that any hypersurface $X\subset\F$ under the hypothesis of Theorem~\ref{th!dP2main}, which does not appear in the Table~\ref{table!dP2list} either is not a $dP_2$ fibration or does not provide an $\F$-Sarkisov link. 

Let us fix a general setting for $\F$ and $X$. Let $\F$ be the rank two toric variety with Cox ring $\Cox(\F)=\C[u,v,x_1,x_2,x_3,x_4]$ and irrelevant ideal $I=(u,v)\cap(x_1,\dots,x_4)$ with the action of $(\C^*)^2$ defined by 
\begin{equation}\label{general-F}
\left(\begin{array}{cccccc}
1&1&-a_1&-a_2&-a_3&-a_4\\
0&0&b_1&b_2&b_3&b_4
\end{array}\right)\qquad,\end{equation}
where $a_i$ are non-negative integers and $\{b_1,\dots,b_4\}=\{1,1,1,2\}$ such that the coordinate variables of $\Cox(\F)$ are in order $u=v\prec x_1\preceq x_2\preceq x_3\preceq x_4$. Let $X$ be a hypersurface of $\F$ defined by a homogeneous polynomial of bi-degree $(\omega,4)$ with respect to the action above. 
We sometimes switch these variable names to our favourite $u,v,x,y,z,t$ when we need to write explicit equations. Otherwise, we keep this notation, as it enables us to consider the order of variables without confusion about the position of the variable $t$ and having to divide into three types described at the beginning of Section~4.2. .

\subsection{Elimination process}
Here we provide the key tools to eliminate cases which do not occur in Table~\ref{table!dP2list}.

In the following lemma, we consider the coordinate variables of $\F$ to be $u,v,x,y,z,t$ and the variable $t$ corresponds to the coordinate, which has been acted by $(\lambda^{-\gamma},\mu^2)\in(\C^*)^2$.
\begin{lm}\label{esign} If $X$ is taken as a hypersurface in $\mathcal{F}$, it fails to be terminal if any of the following holds:
\begin{enumerate}
\item $\mathcal{F}$ is of type $(i)$, and $e>2c$.

\item $\mathcal{F}$ is of type $(ii)$, and $e>0$.
\item $\mathcal{F}$ is of type $(iii)$, and $e>2$.
\end{enumerate}
\end{lm}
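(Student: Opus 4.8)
The strategy is to examine the three normal forms $(i)$, $(ii)$, $(iii)$ separately and, in each case under the stated inequality on $e$, to exhibit a point of $X$ where the singularity is strictly worse than terminal. Since $\F$ itself is smooth away from the singular line $\Gamma_t=(x=y=z=0)$, where it carries a transverse $\frac{1}{2}(1,1,1)$ quotient singularity, the only candidates for non-terminal behaviour on $X$ are concentrated either along $X\cap\Gamma_t$ or at the fixed loci of the torus action coming from the large weight $e$. The plan is to reduce everything to a local analytic model and apply the standard terminal criterion for quotient and hypersurface singularities.

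First I would fix the local picture. In each type, the offending inequality forces the monomials available in $H^0(\F,D)$ with $D=4M-eL$ to be constrained: the bound on $e$ is precisely the threshold past which certain low-order monomials (those that would guarantee quasismoothness or transversality to $\Gamma_t$) can no longer appear in $f$, because their bi-degree becomes inadmissible. Concretely, I would write down the Newton polygon of $D$ in each case, as is done for the individual families in Section~4.2, and observe that when $e$ exceeds the stated value the essential monomial (of the shape $t^2$ or $x^2t$ or the analogous pivot for that type) is absent. The absence of this monomial is what allows the singularity along $X\cap\Gamma_t$ to fail terminality, or forces $X$ to pass through a point of $\F$ at which the restriction of the ambient $\frac{1}{2}(1,1,1)$ is no longer of the isolated terminal form $\frac{1}{2}(1,1,1)$ on $X$.

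The core computation is then a local one at the relevant point $p$. I would pass to the affine chart diagonalising the action, set the appropriate variable to $1$ to kill one $\C^*$ factor, and write $X$ locally as a hypersurface inside the quotient $\C^3/\frac{1}{2}(1,1,1)$ (or $\C^4$ in the smooth chart). The leading terms of $f$ at $p$, dictated by which monomials survive the $e$-bound, determine the type of the resulting hypersurface singularity; when $e$ is too large these leading terms are of too high order, so that the singularity has discrepancy $\le 0$ and thus violates terminality. For the type $(ii)$ and type $(iii)$ cases I expect a cleaner statement because there the weight on the would-be pivot variable is pinned down ($\delta=0$ in case $(ii)$, $\delta=1$ in case $(iii)$), so the threshold ($e>0$, respectively $e>2$) comes directly from comparing the $L$-degree of $f$ against the $L$-degree of the unique monomial that could cut $\Gamma_t$ transversally.

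\textbf{Main obstacle.}
The delicate point, and where I would spend the most care, is case $(i)$ with the sharp bound $e>2c$: here the weight $-c$ on the last fibre variable interacts with the index-$2$ weight, and one must check that for \emph{every} choice of general $f$ the singularity at $X\cap\Gamma_t$ (or at the torus-fixed point with the extreme weight) genuinely fails the terminal discrepancy inequality, rather than merely failing quasismoothness. In other words, the hard part is showing that no cancellation or fortuitous monomial can rescue terminality once $e>2c$, which amounts to verifying that the relevant weighted blow-up has a non-positive discrepancy. I would handle this by computing the discrepancy of the weighted blow-up adapted to the local weights $\frac{1}{2}(1,1,1)$ and checking directly that the inequality $e>2c$ pushes it to $\le 0$, using the Reid--Tai / Kawamata criterion for terminality of the quotient together with the adjunction-type correction coming from the hypersurface $X$.
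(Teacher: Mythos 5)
You have correctly located the mechanism (the bound on $e$ kills the monomials $S(u,v)t^2$, and the trouble lives on $\Gamma_t$), but your plan circles around the decisive step without landing on it, and the machinery you propose would not close the argument as described. The paper's proof is one line: a monomial of bidegree $(-e,4)$ involving $t$ and none of $x,y,z$ must be $S_{2\delta-e}(u,v)\,t^2$, and in each normal form the stated inequality ($e>2c$ with $\delta=c$, $e>0$ with $\delta=0$, $e>2$ with $\delta=1$) makes $2\delta-e<0$, so \emph{no} such monomial exists in the linear system at all. Hence every $f$ of that bidegree --- not just a general one --- vanishes identically on $\Gamma_t=(x=y=z=0)$, so $\Gamma_t\subset X$ and $X$ is singular along an entire curve; one then quotes the fact that $3$-fold terminal singularities are isolated (Reid). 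This disposes of your ``main obstacle'' about case $(i)$: there is no question of a fortuitous monomial rescuing terminality, because the vanishing of $f$ on $\Gamma_t$ is forced by bidegree alone, and the sharp thresholds are exactly $e>2\delta$.

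The genuine gap in your plan is that you never assert $\Gamma_t\subset X$ as a curve and never invoke isolatedness of terminal $3$-fold singularities; instead you propose a pointwise local analysis (Reid--Tai, discrepancies of weighted blow-ups at a chosen point $p$). At a single point of $X\cap\Gamma_t$ that computation is inconclusive: a transverse $\frac{1}{2}(1,1)$ or $A_1$ slice at one point does not by itself contradict terminality --- an isolated $\frac{1}{2}(1,1,1)$ point is terminal, and a compound Du Val point can be terminal --- so to conclude you would still have to observe that the singular locus is one-dimensional, which is precisely the step your plan omits. Relatedly, citing $x^2t$ as a possible ``pivot'' is off target: divisibility of a monomial by $x$ does not prevent $\Gamma_t$ from lying in $X$; the only relevant monomials are those in $u,v,t$ alone. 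Once you replace the local discrepancy computation by the two facts ``$\Gamma_t\subset X$ by degree reasons'' and ``terminal $3$-fold singularities are isolated,'' the lemma is immediate and no case-by-case Newton polygon or blow-up analysis is needed.
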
 
\begin{proof} In any of these cases, whenever $t$ appears in a term of $f$, it is multiplied by a nonconstant polynomial in $x,y,z$, which implies $\Gamma_t\subset X$. We recall that the curve $\Gamma_t$ is defined as $\Gamma_t=(x=y=z=0)\subset X$. Therefore $X$ has a line of singularity, but 3-fold terminal singularities are isolated by \cite{miles-canonical}.\end{proof} 
We are interested in cases that $\sigma=\MMob(X)$. In particular, these are the cases when the type $\III$ and $\IV$ 2-ray game of $X$ follows the one from $\F$. The following lemma helps us to eliminate cases when $X$ fails to follow such link at the beginning of the game.

\begin{theorem}\label{2ray}
Let $X\subset \F$ be defined as in \ref{general-F}. If $X$ is not obtained by one of the following, then either it is not a $dP_2$ fibration or the first step of its 2-ray game cannot be obtained by the restriction of the one from $\F$.
\begin{enumerate}[(i)]
\item $a_1=a_2=a_3=a_4=0$ and $\omega=1$.
\item $a_1=a_2=a_3=0$, $a_4=1$ and $\omega=0$.
\item $a_1=a_2=0$, $a_3a_4\neq 0$ and $\omega=0$.
\item $x_1\prec x_2,x_3,x_4$ and there is a monomial with only variables $x_1,x_2,x_3,x_4$ in the defining equation of $X$.
\end{enumerate} 
\end{theorem}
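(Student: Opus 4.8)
### Proof proposal for Theorem~\ref{2ray}

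The plan is to analyze the first step of the $2$-ray game of $\F$ and determine precisely when its restriction to $X$ behaves correctly, combining this with the terminality constraints already established in Lemma~\ref{esign}. Recall from Theorem~\ref{thm!BZ} that the first map $\Psi_1$ in the $2$-ray link of $\F$ is governed by the variables of smallest ratio weight, i.e. those in $Y_0$, which here are exactly $u,v$ (since $u=v$ sits at the bottom of the order $\preceq$). The map $\Psi_1$ contracts the locus $\P^1_{u:v}$ and is a flip, flop, or anti-flip according to the signs of the discrepancies, which in turn are read off from the weights $a_i$ and the degree $\omega$. The restriction $\psi_1$ to $X$ follows the toric game exactly when one can eliminate one of the fibre variables in a neighborhood of the relevant locus using a suitable monomial of $f$; the failure of such a monomial to appear is precisely what forces $X$ off the link at the first step.

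First I would set up the numerics carefully. Writing $D = 4M - eL$ (with $\omega = -e$ in the sign convention of \ref{general-F}), I would compute the Newton polygon of $f$ and identify which monomials are available in each $u,v$-degree. The key observation, as used throughout Section~4.2, is that the restriction $\psi_1$ of $\Psi_1$ to $X$ is a small modification (flip/flop/anti-flip) precisely when $f$ contains a monomial that is linear in some fibre variable $x_j$ of minimal weight and otherwise involves only the remaining fibre variables; such a monomial lets the implicit function theorem eliminate $x_j$ near the flipping curve $\P^1_{u:v}$. If no such monomial exists, the base locus of $|D|$ meets the flipping curve in a divisor and the restriction fails to be an isomorphism in codimension one, so $X$ cannot follow the game. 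I would organize this into the three matrix types $(i)$, $(ii)$, $(iii)$ from the earlier remark, and in each type bound $\omega$ (equivalently $e$) both from above, using Lemma~\ref{esign} to discard the non-terminal range, and from below, using the requirement that the fibration $\varphi\colon X\to\P^1$ actually have $dP_2$ fibres and that $-K_X$ be $\varphi$-ample.

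Next I would show that outside the enumerated cases $(i)$--$(iv)$, at least one of these two obstructions must occur. The cases $(i)$, $(ii)$, $(iii)$ correspond to the extreme configurations of weights where the restricted game is forced (respectively, the trivial bundle with $\omega=1$, the single weight-one twist with $\delta=1$, and the configuration with two vanishing weights and $a_3 a_4 \neq 0$), and in each the existence of the eliminating monomial can be checked directly from the Newton polygon. Case $(iv)$ captures the remaining situation where $x_1$ is strictly at the bottom of the fibre variables and $f$ contains a monomial purely in the $x_i$; this is exactly the condition guaranteeing that the base locus of $|D|$ avoids the flipping curve so that $\psi_1$ is a genuine small modification. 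For the complementary configurations I would argue that either $\Gamma_t \subset X$ (forcing a non-isolated singularity, contradicting terminality via \cite{miles-canonical} as in Lemma~\ref{esign}), or the absence of any eliminating monomial forces $\psi_1$ to contract a divisor, so the restricted map is not an isomorphism in codimension one and the game cannot proceed as the restriction of the one from $\F$.

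The main obstacle I expect is the bookkeeping in case $(iv)$: translating the abstract condition ``there is a monomial in only the $x_i$'' into a clean statement about the restricted map $\psi_1$ being a small modification requires checking that this monomial genuinely permits elimination of the correct variable near the flipping locus, and that no hidden base component survives along $\P^1_{u:v}$. The cleanest route is to work in the affine chart $(u \neq 0)$ (or $(x_1 \neq 0)$ depending on the sign of the discrepancy), normalize so the distinguished monomial has coefficient one, and invoke the implicit function theorem to produce a local analytic chart in which $\psi_1$ is visibly the toric flip/flop/anti-flip predicted by Theorem~\ref{thm!BZ}. The converse direction—that absence of the monomial forces divisorial contraction—then follows because the base locus of $|D|$ necessarily contains a component of codimension one meeting the flipping curve, which I would detect by exhibiting an explicit effective divisor in $|D|$ whose fixed part lies along $\P^1_{u:v}$.
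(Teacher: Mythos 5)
Your proposal reduces the whole theorem to a single mechanism: the existence of a monomial of $f$ that lets you eliminate a fibre variable near the flipping curve, so that $\psi_1$ is a small modification. That mechanism is only relevant to case~(iv) (and to the first step of case~(iii)), and even there it is not quite the right condition. The condition in~(iv) is merely that $f$ contain \emph{some} monomial purely in $x_1,\dots,x_4$; what this buys is that the surface $\P(\beta_1,\beta_2,\beta_3)$ extracted by $\Psi_1$ is not contained in $X$, so its intersection with $X$ is a curve and the restricted map has the exceptional-locus dimensions of a $3$-fold flip. A pure power $x_1^k$ also qualifies (then $\psi_1$ is an isomorphism), so your ``precisely when $f$ contains a monomial linear in some fibre variable of minimal weight'' is strictly stronger than what is being asserted; that linearity condition is what the paper uses later, family by family, to identify the \emph{type} of the flip, not to prove this theorem.

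The more serious gap is that cases (i)--(iii) concern degenerate weight configurations in which the $2$-ray game of $\F$ has no small modification at all at the first step (all four ratio weights equal $\Rightarrow$ direct fibration to $\P(1,1,1,2)$; three equal $\Rightarrow$ divisorial contraction; $a_1=a_2=0$ with two distinct weights $\Rightarrow$ a contraction of $\P^1\times\P^1$), and the exclusions there have nothing to do with eliminating monomials. The paper rules out the complementary values of $\omega$ by: computing $\Pic(X)=\Pic(S)\times\Pic(\P^1)$ for $\omega=0$ in the equal-weights case (so $\rho_X>2$); observing that $\omega>1$ makes $X$ a generically finite cover of $\P(1,1,1,2)$ of degree $\omega$; reducibility of $X$ for $\omega<0$ in the three-equal-weights case and the image of the restricted contraction being a surface for $\omega>0$; and Proposition~\ref{ro3} for $\omega<0$ together with a dimension mismatch for $\omega>0$ in the $a_1=a_2=0$ case. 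None of these arguments appears in your outline, and they cannot be recovered from the implicit-function-theorem picture, since there is no flipping curve in those configurations. You would need to add the weight-configuration case split and these Picard-rank, irreducibility, covering-degree, and image-dimension arguments to close the proof.
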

\begin{proof} Assume $x_1,x_2,x_3,x_4$ have equal ratio weight, i.e. $x_1=x_2=x_3=x_4$. Then there is no $\Psi_i$ and the 2-ray game of $\F$ is followed by a fibration to $\P(1,1,1,2)$. Without loss of generality we can assume this common weight is zero. In other words, by adding a multiple of the second row of the matrix $A$ to the first row we can assume $X\subset\F$ is defined by
\[\left(\begin{array}{c}
\omega\\4
\end{array}\right)
\subset\left(\begin{array}{cccccc}
1&1&0&0&0&0\\0&0&1&1&1&2\end{array}\right)\qquad.\]
If $\omega=0$, then $X\cong\P^1\times dP_2$. If we denote the generic fibre by $S$, then $\H^1(S,\mathcal{O}_S)=0$ together with Exercise~12.6 in Chapter~III \cite{Hart} implies that $\Pic(X)=\Pic(S)\times\Pic(\P^1)$. And hence $\rho_X>2$ and therefore $X$ is not a Mori fibre space.  
\noindent If $\omega=1$, then the equation of $X$ has the form $uf=vg$ for $f,g$ degree 4 homogeneous polynomials in $\P(1,1,1,2)$. It shows that $X$ is the blow up of $\P(1,1,1,2)$ along a curve defined by $(f=g=0)$. This was done by restricting $\Phi^\prime$ to $X$, which shows the 2-ray game of $X$ comes from $\F$. This case was given as Family~1 in Table~\ref{table!dP2list}.\\
If $\omega>1$, then $X$ is generically  an $\omega$-cover of $\P(1,1,1,2)$, which fails to be a $dP_2$ fibration.\\

To move onto the next case, suppose the ratio weight of $x_1,x_2,x_3$ is equal and normalised to zero and different from that of $x_4$. In other words, $x_1=x_2=x_3\prec x_4$ and $X\subset\F$ is defined by
\[\left(\begin{array}{c}
\omega\\4
\end{array}\right)
\subset\left(\begin{array}{cccccc}
1&1&0&0&0&-a\\0&0&b_1&b_2&b_3&b_4\end{array}\right)\qquad,\]
for a positive integer $a$.
In this case, the 2-ray game of $\F$ is followed by a divisorial contraction to $\displaystyle{\P=\Proj\bigoplus_k\Cox(\F)_{(0,k)}}$, with exceptional divisor $(x_4=0)$. If $\omega<0$, then $X$ is reducible and hence not a $dP_2$ fibration.\\
If $\omega=0$ and $a=1$, then $\varphi^\prime$ is a divisorial contraction from $X$, which is case (ii). This forms Family~2 and Family~3 in Table~\ref{table!dP2list}. The failure of case $\omega=0$ and $a>1$ is proved in Lemma~\ref{w=0-a>1} below.\\
The interesting case is when $\omega>0$. In this situation the image of restriction of the contraction on $\F$ to $X$ is a surface, hence this map does not define the 2-ray game of $X$. This means that $X$ does not have an $\F$-Sarkisov link. But when $b_4=\omega=a=1$, we show in Example~\ref{unproj} that $X$ is non-rigid. Note that this case does not appear in Table~\ref{table!dP2list} as the 2-ray game is given by a different ambient space. Apart from this special case, if $X$ forms a $dP_2$ fibration, we expect it to be non-rigid.\\

For part $(iii)$, assume $a_1=a_2=0$ and $x_1,x_2\prec x_3,x_4$. In this case, the 2-ray game of $\F$ is continued by an anti-flip (or flop), which contracts $\P^1\times\P^1$ to $\P^1$ and extracts a copy of $\P^1\times\P(a_3,a_4)$. If $\omega=0$, then the restriction of this operation to $X$ will be a finite number (2 or 4) of disjoint anti-flips (or flops) of type $(1,1,-a_3,-a_4)$. This is the case mentioned in $(iii)$.\\
If $\omega< 0$, then the Picard number of $X$ is bigger than two, which is proved in Proposition~\ref{ro3}. This shows that $X$ is not a $dP_2$ fibration.\\
If $\omega>0$, then the restriction of the ambient anti-flip (flop) defines an small contraction in one side and an isomorphism in the other side, which clearly does not read the 2-ray game of $X$.\\

Assume $x_1\prec x_2,x_3,x_4$. In this case the 2-ray game of $\F$ at the level of $\Psi_1$ can be read as a flip (flop or anti-flip) of type $(\alpha,\alpha,-\beta_1,-\beta_2,-\beta_3)$. It is obvious that this will restrict to a 3-fold flip (flop or anti-flip) on $X$ if  the extracted surface, $\P(\beta_1,\beta_2,\beta_3)$ with coordinate variables $x_2,x_3,x_4$, intersected with $X$ defines a curve. This will be valid only if this surface is not a subvariety of $X$. This means the defining polynomial of $X$ must have at least one monomial with only $x_i$ variables. Note that if a term of the form $x_1^k$ appears in the equation, $X$ will pass this step of the 2-ray game isomorphically and nothing contradicts our statements.
\end{proof}

\begin{ex}\label{unproj}\emph{Let $X\subset\F$ be defined in the usual way by}
\[\left(\begin{array}{c}1\\4\end{array}\right)\subset\left(\begin{array}{cccccc}
1&1&0&0&0&-a\\0&0&1&1&2&1\end{array}\right)\qquad,\]
\emph{where $a>0$ is an integer. It was shown in the proof of Theorem~\ref{2ray} that such $X$ does not have an $\F$-link. Here we show that $X$ can be embedded into another scroll $\F^\prime$ such that $X$ has an $\F^\prime$-Sarkisov link to another Mori fibre space.}

\emph{Let us fix the variables of $\F$ in order by $u,v,x,y,t,z$ as usual. The defining polynomial of $X$ is of the form $uf=vg$ for some bi-degree $(0,4)$ polynomials $f,g$. Now we apply unprojection operations of \cite{papa}. Let $s$ be  a rational function defined by 
\[s=\frac{f}{v}=\frac{g}{u}\]
with bi-degree $(-1,4)$. Then treat it as a variable in equations $us=g$ and $vs=f$. This enables us to embed $X$ into the scroll $\F^\prime$:
\[\left(\begin{array}{ccccccc}
1&1&0&0&0&-1&-a\\0&0&1&1&2&4&1\end{array}\right)\qquad,\]
where the variables in order are $u=v\prec x=y=t\prec s\prec z$. The variety $X$ is embedded into $\F^\prime$ as the complete intersection of two hypersurfaces $us=g$ and $vs=f$.}

\emph{$\F^\prime$ is a 5-fold toric variety of rank 2 whose 2-ray game starts by an anti-flip (or flop) of type $(1,1,-1,-a)$ over a surface $\P(1,1,2)$. Meaning, it contracts a copy of $\P^1\times\P(a,a,2)$ to $\P(1,1,1)$ in one side and extracts a copy of $\P(1,a)\times\P(1,1,2)$ in the other side. The restriction of this map to $X$ defines an anti-flip (or flop), consisting 2 disjoint anti-flip (or flop) of type $(1,1,-1,-a)$. Then it has a divisorial contraction to a codimension 2 Fano 3-fold of index one defined by $Y_{4,4}\subset\P(1,1,1,1,2,3)$.}\end{ex}

The key point in this example is that the $\sigma\subset\MMob(X)$ but they are not equal. However, as $-K_X$ is still in the pseudo-effective cone, we managed to find another embedding of $X$ for which $\MMob(X)$ is the restriction of that of the ambient space. This allowed us to read $-K_X\in\Int(\MMob(X))$.

Before stating the next lemma, we say a few words about the anticanonical classes of $\F$ and $X$. By Corollary~2.2.6 in~\cite{goto} the anticanonical divisor of $\F$ has bi-degree $(2-\sum a_i,\sum b_i)$. By adjunction we have
\[-K_X=(-K_{\F}-X)|_X\]
and hence the anticanonical divisor of $X$ has bi-degree $(2-\sum a_i-\omega,1)$.

\begin{lm}\label{lem!-k} Let $X$ be a hypersurface of $\mathcal{F}$, as in the assumption of Theorem~\ref{2ray}, satisfying conditions of Theorem~\ref{2ray} and Lemma~\ref{esign}, which has an $\F$-link. If $-K_X\sim mD_3-nD_u$ for a positive integer $m$ and a non-negative integer $n$, then the last morphism of the 2-ray game of $X$ is not an extremal contraction.\end{lm}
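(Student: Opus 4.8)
\emph{Proof idea.} My plan is to read the hypothesis $-K_X\sim mD_3-nD_u$ as an intersection number against the curve contracted by the final morphism, and then observe that this number is non-positive, so that $\varphi'$ cannot be a $K_X$-negative contraction.

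First I would fix the geometry of the game. Since $X$ has an $\F$-link, its $2$-ray game restricts from that of $\F$; by Proposition~\ref{generators-of-mob} the game runs from the ray $D_u$ (the given fibration $\varphi\colon X\to\P^1$) to the opposite boundary ray $D_3$ of $\MMob(\F)=\langle D_u,D_3\rangle$, with $L=D_u$ (see \S\ref{mobile-cones}). Hence the last morphism $\varphi'$ is the contraction of the final model $X_{\mathrm{last}}$ supported by a positive multiple of $D_3$. Let $C\subset X_{\mathrm{last}}$ be a curve generating the extremal ray that $\varphi'$ contracts; since $D_3$ is its supporting divisor, $D_3\cdot C=0$. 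All of $D_u,D_3,K_X$ are $\Q$-Cartier on the $\Q$-factorial model $X_{\mathrm{last}}$, and as every intermediate $\psi_i$ is an isomorphism in codimension one, their classes and the class $-K_X\sim mD_3-nD_u$ are carried unchanged to $X_{\mathrm{last}}$.

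Next I would pin down the sign $D_u\cdot C>0$. Working in $\N^1(\F_{\mathrm{last}})_\R\cong\R^2$ with the chamber decomposition of Proposition~\ref{generators-of-mob}, the linear functional $(\,\cdot\,C)$ vanishes on the line spanned by $D_3$ and is strictly positive on the nef cone of $X_{\mathrm{last}}$, the chamber adjacent to $D_3$ on the $D_u$-side. The ray $D_u$ lies in the same open half-plane as this chamber relative to the line $\R D_3$, so $D_u\cdot C$ has the same (positive) sign as $A\cdot C$ for $A$ ample on $X_{\mathrm{last}}$; thus $D_u\cdot C>0$. These toric intersection numbers, and the extremal curve $C$ itself, transfer faithfully to $X_{\mathrm{last}}$ precisely because the $2$-ray game of $X$ restricts from that of $\F$.

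Finally I compute, using $D_3\cdot C=0$,
\[
-K_X\cdot C=m\,(D_3\cdot C)-n\,(D_u\cdot C)=-n\,(D_u\cdot C)\le 0,
\]
with equality exactly when $n=0$. Hence $\varphi'$ is crepant when $n=0$ and strictly $K_X$-positive when $n>0$; in neither case is it the $K_X$-negative morphism that a divisorial or fibre-type Mori contraction must be. Therefore the last morphism of the $2$-ray game of $X$ is not an extremal contraction. The step I expect to require the most care is the justification that $C$ is genuinely the restriction of the toric extremal curve and that the sign $D_u\cdot C>0$ survives on the singular model $X_{\mathrm{last}}$; this is exactly where the hypothesis that the game restricts from $\F$ (i.e.\ that $X$ has an $\F$-link) does the real work.
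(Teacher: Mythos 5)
Your proof is correct and follows essentially the same strategy as the paper: pair $-K_X\sim mD_3-nD_u$ against a curve $C$ contracted by the last morphism, use $D_3\cdot C=0$ (since $D_3$ supports that contraction) together with $D_u\cdot C\ge 0$ to get $-K_X\cdot C\le 0$, so the map is not $K$-negative and hence not an extremal contraction. The only difference is packaging: the paper exhibits explicit contracted curves case by case according to the ratio-weight configuration (treating the case $x_1=x_2=x_3\prec x_4$ by a separate numerical reduction via Theorem~\ref{2ray} and Lemma~\ref{w=0-a>1}), whereas you argue uniformly with an abstract generator of the contracted extremal ray and read off the sign of $D_u\cdot C$ from the chamber structure of $\MMob(\F)$.
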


\begin{proof}
The proof is given case by case, depending on the ratio weights of the variables. In each case we find a curve inside the exceptional locus of $\varphi^\prime$, which has positive intersection against the anticanonical class. This shows that the last morphism of the 2-ray game is not an extremal contraction. 
\begin{enumerate}[\text{Case} I]
\item $x_2\prec x_3\preceq x_4$\\
Let $C=(x_1=x_4=f=0)\subset\Exc(\varphi^\prime)$, where $f$ is the defining polynomial of $X$. Note that the irrelevant ideal of the domain variety of $\varphi^\prime$ is defined by $(u,v,x_1,x_2)\cap(x_3,x_4)$. Therefore $D_3\cdot C=0$, which implies
\[-K\cdot C=0-nD_u\cdot(x_1=x_4=f=0)\leq 0\]
\item $x_1\prec x_2=x_3\preceq x_4$\\
Let $C=(x_2=x_4=f=0)$. As the irrelevant ideal in this case is $(u,v,x_1)\cap(x_2,x_3,x_4)$, similar argument shows
\[-K\cdot C=0-nD_u\cdot(x_2=x_4=f=0)\leq 0\]
\item $x_1=x_2=x_3\prec x_4$\\
The irrelevant ideal in this case is $(u,v)\cap(x_1,x_2,x_3,x_4)$. Without loss of generality we can assume that $X$ is defined by
\[\left(\begin{array}{c}
\omega\\4
\end{array}\right)\subset\left(\begin{array}{cccccc}
1&1&0&0&0&-a\\
0&0&b_1&b_2&b_3&b_4
\end{array}\right)\quad,\]
where $a$ is a positive integer. Theorem~\ref{2ray} together with Lemma~\ref{w=0-a>1} implies $\omega=0$ and $a=1$.
\end{enumerate}
\end{proof}
\begin{rmk}\emph{Note that Lemma~\ref{lem!-k} implies that in order to have an $\F$-link from $X$, it is necessary for the ratio weight of $-K_X$ to be strictly less than that of the coordinate variable $x_3$. This is simply saying that $-K_X\in\Int(\MMob(X))$.}\end{rmk}

\begin{lm}\label{w=0-a>1} Let $X\subset\F$ be defined by
\[\left(\begin{array}{c}0\\4\end{array}\right)\subset\left(\begin{array}{cccccc}
1&1&0&0&0&-a\\0&0&1&1&2&1\end{array}\right)\qquad,\]
with variables in order $u=v\prec x=y=t\prec z$ with $a\in\Z$, $a\geq 1$. If the integer $a$ is strictly bigger than 1, then the image of the last morphism of the $2$-ray game of $X$ is not terminal.
\end{lm}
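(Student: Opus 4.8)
The plan is to make the single-step $2$-ray game of $X$ completely explicit, identify the target of its final contraction as a weighted hypersurface, and then exhibit a one-dimensional locus of quotient singularities on it.

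First I would set up the game. The fibre variables $x,y,t$ all have ratio weight $0$ while $z$ has ratio weight $-a<0$, so the partition of Theorem~\ref{thm!BZ} is $Y_0=\{x,y,t\}$, $Y_1=\{z\}$; thus $r=1$, $|Y_r|=1$, and the $2$-ray link of $\F$ has no small modifications and consists of a single divisorial contraction $\Phi^\prime\colon\F\to\F_1$. Fixing the action by setting $z=1$, so that the residual $\C^*$ acts on $u,v,x,y,t$ with weights $1,1,a,a,2a$, identifies $\F_1\cong\P(1,1,a,a,2a)$, and $\Phi^\prime$ contracts the divisor $(z=0)$ onto $\{u=v=0\}=\P(a,a,2a)=\P(1,1,2)$. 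By hypothesis and Theorem~\ref{2ray} the game of $X$ restricts from this one, so the final morphism is $\varphi^\prime\colon X\to Y:=\varphi^\prime(X)\subset\P(1,1,a,a,2a)$.

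Next I would compute $Y$. Writing each monomial of the bi-degree $(0,4)$ equation $f$ as $x^iy^jt^kz^lu^pv^q$ with $i+j+2k+l=4$ and $p+q=al$, and restricting to $z=1$, every monomial acquires weighted degree $4a$, so $Y=(\bar f=0)$ is a hypersurface $Y_{4a}\subset\P(1,1,a,a,2a)$ (for $a=1$ this is the Family~$3$ Fano $Y_4\subset\P(1,1,1,1,2)$). The crucial observation is that $l\ge 1$ forces $p+q=al\ge a\ge 2$, so every monomial involving $z$ carries $u,v$ to total degree at least $a$. Consequently $\bar f|_{u=v=0}=f_4(x,y,t)$, the pure degree-$4$ part in $x,y,t$, and $Y\cap\{u=v=0\}=(f_4=0)=:C$ is a nonempty curve in $\P(1,1,2)$; moreover $\partial_u\bar f$ and $\partial_v\bar f$ vanish identically along $\{u=v=0\}$.

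Finally I would analyse $Y$ near $C$ and conclude. At a general point $p\in C$ the space $\P(1,1,a,a,2a)$ is a quotient of a smooth orbifold chart $\C^4_{u,v,y,t}$ by $\mu_a$ acting as $\frac{1}{a}(1,1,0,0)$; since $f_4$ is general the lift of $Y$ is smooth at $p$, while $\mu_a$ rotates the transverse coordinates $u,v$. Hence $Y$ carries a transverse $\frac{1}{a}(1,1)$ quotient singularity at every point of $C$, so for $a>1$ the singular locus of $Y$ is at least one-dimensional. As three-fold terminal singularities are isolated \cite{miles-canonical}, $Y$ is not terminal, as claimed. The main obstacle is precisely this last local computation: one must verify that $Y$ acquires quotient singularities along \emph{all} of $C$, not merely at finitely many points, and this is exactly what the divisibility $p+q=al\ge a\ge 2$ guarantees — the one place where the hypothesis $a>1$ is used and where the present lemma departs from the terminal cases $a=1$ of Families~$2$ and~$3$.
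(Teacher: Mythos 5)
Your proof is correct and follows essentially the same route as the paper's: identify the target of the final divisorial contraction as a weighted projective $4$-fold with a two-dimensional singular stratum along $(u=v=0)$ (you present it as the geometric quotient $\P(1,1,a,a,2a)$, while the paper writes it as the quotient of $\P(1,1,1,1,2)$ by $\frac{1}{a}(1,1,0,0,0)$), and conclude that the image of $X$ meets this stratum in a curve of non-isolated singularities, contradicting terminality. Your explicit local verification of the transverse $\frac{1}{a}(1,1)$ quotient singularity along all of $C$, via the divisibility $p+q=al\ge a$, supplies the detail that the paper's three-sentence proof only asserts.
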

\begin{proof}
If $a>1$, then the image of $\F$ under the last morphism of its $2$-ray game is defined by the quotient of $\P(1,1,1,1,2)$ by the action of $\frac{1}{a}(1,1,0,0,0)$. In particular, this variety has a singular locus of dimension $2$. Hence the image of $X$ under this map has non-isolated singularities (along a curve) and therefore is not terminal.

\end{proof}

\begin{prop}\label{ro3} Let $X\subset\F$ be defined as before. If $D=(x_3=x_4=0)\subset X$ forms a divisor on $X$, i.e. if  the defining polynomial of $X$ is of the form $x_3f=x_4g$, then $\rho_X$, the Picard number of $X$, is at least 3.\end{prop}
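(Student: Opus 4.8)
The plan is to produce a third independent class in the divisor class group of $X$, coming from the locus $D=(x_3=x_4=0)$, which by hypothesis lies on $X$ and is therefore a Weil divisor rather than a codimension‑two subvariety. Writing the defining polynomial as $f=x_3P-x_4Q$ (this is exactly the statement that $f\in(x_3,x_4)$), I would first record how $D$ interacts with the ambient coordinate divisors. Restricting $(x_3=0)$ to $X$ and using $f=x_3P-x_4Q$ gives $x_4Q=0$, so as Weil divisors on $X$ one has $(x_3=0)|_X=D+E_3$ with $E_3=(x_3=Q=0)|_X$; symmetrically $(x_4=0)|_X=D+E_4$ with $E_4=(x_4=P=0)|_X$. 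Since $(x_3=0)$ and $(x_4=0)$ are $\Q$-Cartier on the simplicial (hence $\Q$-factorial) toric variety $\F$, their restrictions have classes $-a_3L+b_3M$ and $-a_4L+b_4M$ lying in the rank-two image of $\Cl(\F)_\Q\to\Cl(X)_\Q$. Thus the decompositions above already exhibit $D$ as a genuine component of a restricted ambient divisor; the real task is to prove that $[D]$ is not a rational combination of $L|_X$ and $M|_X$.

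For this I would test the independence of the three classes $L|_X$, $M|_X$ and $[D]$ by restricting to a general fibre $S=X\cap(\text{fibre of }\F/\P^1)$, a degree-two del Pezzo surface with $\rho(S)=8$. Under restriction $L|_X\mapsto 0$ (the fibre class dies on a fibre), $M|_X\mapsto -K_S$ (the degree-two polarisation, with $(-K_S)^2=2$), and $[D]\mapsto[\ell]$ where $\ell=(x_3=x_4=0)\cap S$; note that $\ell$ is the whole weighted line $(x_3=x_4=0)$ in $\P(1,1,1,2)$, because $f$ vanishes identically there. Suppose $\alpha L|_X+\beta M|_X+\gamma[D]=0$ in $\Cl(X)_\Q$. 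Restricting to $S$ gives $\beta(-K_S)+\gamma[\ell]=0$, and $L|_X$ being a nonzero fibre class then forces $\alpha=0$ once $\beta=\gamma=0$ is known. So everything reduces to showing that $\ell$ is not numerically proportional to $-K_S$ on $S$.

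This last point is a short intersection computation. In the representative case $\ell\cong\P^1$ is a line on $S$ with $\ell\cdot(-K_S)=1$, and adjunction on the smooth surface $S$ gives $\ell^2=-1$; if $\ell\equiv\lambda(-K_S)$ then $1=\ell\cdot(-K_S)=2\lambda$ forces $\lambda=\tfrac12$ and hence $\ell^2=2\lambda^2=\tfrac12\neq-1$, a contradiction. Therefore $\ell$ and $-K_S$ are independent in $\Pic(S)_\Q$, so $L|_X,M|_X,[D]$ are independent in $\Cl(X)_\Q$, and $\operatorname{rank}\Cl(X)\geq 3$. Since a $dP_2$ fibration is $\Q$-factorial, for such $X$ one has $\rho_X=\operatorname{rank}\Cl(X)\geq 3$, as claimed (and if $X$ fails to be $\Q$-factorial it is not a $dP_2$ fibration in any case).

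The main obstacle is the degenerate configuration in which the weight-two fibre variable is not among $x_3,x_4$: then $f\in(x_3,x_4)$ forces the absence of the square of that variable, so $\Gamma_t\subset X$, the general fibre $S$ becomes singular, and the clean $(-1)$-curve computation above must be replaced by the corresponding weighted computation on $S$, where proportionality $\ell\equiv\lambda(-K_S)$ again yields a non-integral self-intersection and hence a contradiction. A cleaner conceptual route, which I would use as a consistency check, is to blow up $\F$ along $(x_3=x_4=0)$ to a rank-three toric variety $\widetilde\F$ and take the strict transform $\widetilde X=(P-sQ=0)$; the induced morphism $\widetilde X\to X$ is an isomorphism in codimension one, so $\Cl(X)\cong\Cl(\widetilde X)$, and the latter has rank three by the same Lefschetz-type argument used in Section~\ref{Pic-section}. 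The cost of this second route is precisely verifying quasismoothness of $\widetilde X$ and that it lies in $\Int(\MMob(\widetilde\F))$, which is why I would keep the fibrewise argument as the primary proof.
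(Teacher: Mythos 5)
Your argument is correct and reaches the conclusion by a genuinely different route from the paper's. The paper works directly on $X$: it fixes the three divisors $L\sim(u=0)$, $M\sim(x_1=0)$ and $D$, chooses three explicit curves $C_1=(u=x_3=x_4=0)$, $C_2=(x_1=x_3=x_4=0)$ and $C_3=(v=x_2=0)\cap X$ (the first two lying inside $D$ itself), and observes that the resulting $3\times3$ intersection matrix has the shape $\left(\begin{smallmatrix}0&1&0\\1&*&0\\ *&*&1\end{smallmatrix}\right)$, hence full rank; this is a single computation, uniform in the weights. You instead restrict the three classes to a general fibre $S$ and reduce everything to the fact that the line $\ell=(x_3=x_4=0)\cap S$, which is forced to lie on $S$ because $f\in(x_3,x_4)$, cannot be numerically proportional to $-K_S$ --- a short adjunction computation. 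Your route is more conceptual: it exhibits the third class geometrically as a line sitting in every del Pezzo fibre, and it isolates exactly why the hypothesis is fatal. What it costs you is a genuine case split on whether the weight-$2$ variable lies among $x_3,x_4$: when it does not, $\ell\cong\P(1,2)$ passes through the $\tfrac12(1,1)$ point, $S$ is singular there, $\ell\cdot(-K_S)=\tfrac12$, and the orbifold adjunction you defer to a remark must actually be carried out (it does close: proportionality would give $\ell^2=\tfrac18$, while the weighted adjunction gives $\ell^2<0$). The paper's matrix argument sidesteps that case split at the price of being less illuminating. Both proofs rest on the same implicit assumption that the displayed intersection numbers are defined, i.e.\ that the relevant classes are $\Q$-Cartier.
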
 

\begin{proof} As in the assumption, let the defining polynomial of $X$ be $x_3f=x_4g$ for non-constant polynomials $f,g$. Let $M\sim(x_1=0)$ and $L\sim(u=0)$ be two other divisors on $X$. We show that $D$, $M$ and $L$ are linearly independent and hence $\Pic(X)$ has at least three generators. To do so, we find three curves inside $X$ and compute their intersections with these divisors. These number form a $3\times 3$ matrix. If the rank of this matrix is bigger than 3, we have shown that these divisors are linearly independent.

Consider three curves $C_1,C_2,C_3\subset X$ defined by
\[C_1=(u=x_3=x_4=0)\qquad C_2=(x_1=x_3=x+4=0)\qquad C_3=((v=x_2=0)\cap X)\]

Computing intersection numbers gives:
\[\left(\begin{array}{ccc}
L\cdot C_1&L\cdot C_2&L\cdot C_3\\
M\cdot C_1&M\cdot C_2&M\cdot C_3\\
D\cdot C_1&D\cdot C_2&D\cdot C_3
\end{array}\right)=\left(\begin{array}{ccc}
0&1&0\\1&*&0\\ *&*&1
\end{array}\right)\quad,\]
where $*$ denotes some numbers that we have no interest in computing them.
Which shows that this matrix has full rank and hence $\rho_X>2$.\end{proof} 
 A typical example of a variety concerned in Proposition~\ref{ro3} has following shape:
\[X\in\left(\begin{array}{c}-1\\4\end{array}\right)\subset\left(\begin{array}{cccccc}
1&1&0&0&-1&-2\\0&0&1&1&1&2\end{array}\right)\]

Before we start the next section let us recall that $\F$ is said to be of type $(i)$, $(ii)$ or $(iii)$ if the corresponding action of $(\C^*)^2$ has the following representations. Note that an easy argument shows that any $\F$ considered in this article has a unique representation in one of these types. 
\[\begin{array}{llllll}
\vspace{0.4cm}
(i)&\qquad& A=\left(\begin{array}{cccccc}
1&1&0&-a&-b&-c\\
0&0&1&1&1&2\end{array}\right)&\qquad& 0<c\,,\,0\le a\le b&\\
\vspace{0.4cm}
(ii)&\qquad& A=\left(\begin{array}{cccccc}
1&1&-a&-b&-c&0\\
0&0&1&1&1&2\end{array}\right)&\qquad &\hspace{0.42cm} 0\le a\le b\le c&\\
(iii)&\qquad& A=\left(\begin{array}{cccccc}
1&1&-a&-b&-c&-1\\
0&0&1&1&1&2\end{array}\right)&\qquad& \hspace{0.42cm} 0<a\leq b\leq c&,\end{array}\]
where $a$, $b$ and $c$ are non-negative integers and the variables are $u,v,x,y,z,t$. The conditions on the order of $a,b,c$ imply that in all cases above the variables $x,y,z$ are ordered with $x\preceq y\preceq z$. And  if $\F$ is of type $(ii)$ or $(iii)$, then $t\preceq x$. 

Table~\ref{table!numerology} below gathers some computations of the anti-canonical class of $\F$ and $X$, which we use later.

\begin{table}[ht]\small
\[\begin{array}{c||c|c|c}
&\text{Type }(i)&\text{Type }(ii)&\text{Type }(iii)\\
\hline
\hline
-K_{\mathcal{F}}&(2-a-b-c)L+4M&(2-a-b-c)L+4M&(1-a-b-c)L+4M\\
\hline
-K_X&(2+e-a-b-c)L+M&(2+e-a-b-c)L+M&(1+e-a-b-c)L+M\\
\end{array}\]
\caption{Anticanonical classes of $\F$ and $X$\label{table!numerology}}
\end{table}

In the next two subsection, we explicitly analyse cases which do not occur in Table~\ref{table!dP2list} and give arguments why each of them fails. Our arguments are based on the materials provided in this part, namely Lemma~\ref{esign}, Theorem~\ref{2ray}, Lemma~\ref{lem!-k} and Proposition~\ref{ro3}. 

\subsection{Hypersurfaces in scrolls of Type $(ii)$ or $(iii)$}

\begin{prop}\label{3fail} If $\mathcal{F}$ is of type $(iii)$, then $X$ does not have a link to any other Mori fibre space except for $e=2, a=b=c=1$.\end{prop}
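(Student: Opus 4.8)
The plan is to trap the four numerical invariants $(a,b,c,e)$ of a type~$(iii)$ datum between two necessary inequalities and to observe that their intersection is a single point. The first inequality will come from terminality of $X$, the second from the demand that the last step of the restricted $2$-ray game be an honest $K$-negative extremal contraction; both ingredients are already available as Lemma~\ref{esign} and Lemma~\ref{lem!-k}, so the work is mostly bookkeeping of the anticanonical class.

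First I would record the variable ordering in type~$(iii)$. Since $a\ge 1$, the ratio weight $-\tfrac12$ of $t$ exceeds $-a$, so $t\prec x\preceq y\preceq z$ and, in the notation of~\ref{general-F}, $(x_1,x_2,x_3,x_4)=(t,x,y,z)$; in particular $D_3=(y=0)\sim -bL+M$. By Lemma~\ref{esign}(3), if $e>2$ then every term of $f$ containing $t$ is multiplied by a nonconstant form in $x,y,z$, so $\Gamma_t\subset X$ and $X$ carries a curve of singularities; hence $X$ is not terminal and cannot be a $dP_2$ fibration. Thus any admissible $X$ satisfies $e\le 2$.

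Next I would feed the anticanonical class into Lemma~\ref{lem!-k}. Assume $X$ carries an $\F$-Sarkisov link; then $X$ is terminal and, because a monomial in $x,y,z,t$ alone must occur for the first link step to survive, it falls under case~$(iv)$ of Theorem~\ref{2ray}, so the hypotheses of Lemma~\ref{lem!-k} are met. From Table~\ref{table!numerology}, $-K_X=(1+e-a-b-c)L+M$, and since its $M$-degree equals that of $D_3$ I may write
\[
-K_X=D_3-nD_u,\qquad n=a+c-e-1 .
\]
If $n\ge 0$, i.e. $e\le a+c-1$, then Lemma~\ref{lem!-k} forces the final morphism of the game not to be an extremal contraction, contradicting the existence of a link to a new Mori fibre space. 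Therefore $n<0$, that is $e\ge a+c$ (equivalently $-K_X\in\Int(\MMob(X))$). Combining with $e\le 2$ gives $a+c\le e\le 2$; as $a,c\ge 1$ in type~$(iii)$, this forces $a=c=1$ and $e=2$, whence $1=a\le b\le c=1$ yields $b=1$. So the only surviving datum is $(a,b,c;e)=(1,1,1;2)$, namely Family~6, whose link to a conic bundle was exhibited in the constructive part.

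The step I expect to be the main obstacle is the clean application of Lemma~\ref{lem!-k}: I must verify that having an $\F$-link genuinely places $X$ in case~$(iv)$ of Theorem~\ref{2ray} and that the coefficient of $M$ in $-K_X$ is exactly $1$, so that the decomposition $-K_X=mD_3-nD_u$ really has $m=1$ and the sign of $n$ controls the outcome. A secondary point to treat with care is the degenerate ordering $a=b=c$, where $x,y,z$ share a ratio weight and $D_3$ is only defined up to relabelling; since the three divisors are linearly equivalent, this ambiguity affects neither the relevant intersection numbers nor the class of $-K_X$, and the argument goes through unchanged.
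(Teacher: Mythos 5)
Your proposal is correct and follows essentially the same route as the paper: Lemma~\ref{esign} bounds $e\le 2$, and Lemma~\ref{lem!-k}, applied to $-K_X=(1+e-a-b-c)L+M$ written against $D_3\sim -bL+M$, eliminates everything except $(a,b,c;e)=(1,1,1;2)$. The paper merely organises the same two ingredients as a case split on $e=2$, $e<2$, $e>2$ rather than your single inequality chain $a+c\le e\le 2$.
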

\begin{proof} If $e=2$, then Lemma \ref{lem!-k} implies $a+c<3$, and that means $a=b=c=1$. Under these numerical conditions a general $X$ passes the first step of the $2$-ray game isomorphically and then maps to $\mathbb{P}^2$ with conic fibres. This forms Family~6 in Table~\ref{table!dP2list}.\\
The case $e>2$ is not concerned, due to Lemma~\ref{esign}.  For $e<2$, Lemma~\ref{lem!-k} does the elimination.\end{proof}

\begin{prop}\label{2fail}
Suppose $\F$ is of type $(ii)$, and consider its 2-ray game of Type~\III\ or \IV.
Exactly one of the following cases occurs:
\begin{enumerate}
\item
$X$ does not have an $\F$-link, or
\item
$X$ does have an $\F$-link but it does not lead to an $\F$-Sarkisov link on $X$, or
\item
$X$ follows the 2-ray game of $\F$ to a Sarkisov link, and we are in one of the cases
\begin{enumerate}[(A)]
\item\label{eq!e=01} $e=a=0$, $b=c=1$,
\item\label{eq!e=02} $e=a=b=0$, $c=1$,
\item\label{eq!e=-1} $e=-1$, $a=b=c=0$.
\end{enumerate}
\end{enumerate}
\end{prop}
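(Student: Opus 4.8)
The plan is to reduce the trichotomy to a purely numerical classification. First I would record the ingredients specific to type $(ii)$: the variables are ordered $u=v\prec t\preceq x\preceq y\preceq z$, so in the notation of~\ref{general-F} one has $x_1=t$, $x_2=x$, $x_3=y$, $x_4=z$, and by Table~\ref{table!numerology} the anticanonical class is $-K_X=(2+e-a-b-c)L+M$. The three alternatives listed in the statement are mutually exclusive and exhaustive directly from Definition~\ref{Xi} (either there is no $\F$-link, or an $\F$-link that fails to be an $\F$-Sarkisov link, or a genuine $\F$-Sarkisov link). Hence ``exactly one occurs'' is immediate, and the only real content is that whenever $X$ admits an $\F$-Sarkisov link we are forced into one of (A),(B),(C). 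I would therefore assume throughout that $X$ has an $\F$-Sarkisov link and squeeze out the numerical constraints.

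The two decisive constraints come from the already-proven lemmas. Since an $\F$-Sarkisov link requires every $X_i$, in particular $X$, to be terminal, Lemma~\ref{esign}(2) gives the upper bound $e\le 0$. For the lower bound I would invoke Lemma~\ref{lem!-k}. Its hypotheses are met here: $X$ has an $\F$-link, and being a $dP_2$ fibration whose first step descends from $\F$ it satisfies the conditions of Theorem~\ref{2ray} (by contraposition) and of Lemma~\ref{esign}. Now solving $-K_X=mD_3-nD_u$ against $D_3\sim -bL+M$ and $D_u\sim L$ forces $m=1$ from the $M$-coefficient and then $n=a+c-e-2$; thus $-K_X$ takes the form forbidden by the lemma (with $n\ge 0$) precisely when $e\le a+c-2$. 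Because the final morphism $\varphi'$ of an $\F$-Sarkisov link is an extremal contraction (Definition~\ref{Xi}(ii)), the conclusion of Lemma~\ref{lem!-k} cannot hold, so $e\ge a+c-1$; equivalently $-K_X\in\Int(\MMob(X))$, as in the remark following that lemma. Combining the bounds yields $a+c-1\le e\le 0$, hence $a+c\le 1$.

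It then remains to enumerate, using $0\le a\le b\le c$ together with $a+c\le 1$. The only solutions are $(a,b,c)=(0,0,0)$ with $e\in\{-1,0\}$, $(a,b,c)=(0,0,1)$ with $e=0$, and $(a,b,c)=(0,1,1)$ with $e=0$. The last two are exactly cases (B) and (A). For $(a,b,c)=(0,0,0)$ the value $e=-1$ is case (C), while $e=0$ must be discarded separately: there all four fibre coordinates share ratio weight $0$, so $X\cong\P^1\times dP_2$ and $\rho_X>2$ by the computation already carried out in the proof of Theorem~\ref{2ray}; thus $X$ is not a Mori fibre space and cannot lie in the third alternative. This leaves precisely (A), (B), (C).

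I expect the main obstacle to be the degenerate boundary behaviour rather than the inequalities themselves: when weights coincide (for instance $a=0$ identifies $t=x$, and $a=b=c=0$ collapses all four fibre variables into a single block) the shape of the $2$-ray game of $\F$ changes, so I must confirm that Lemmas~\ref{esign} and~\ref{lem!-k} still apply verbatim and that the excluded $\P^1\times dP_2$ configuration is correctly routed into alternatives (1)/(2) rather than (3). Finally I would note, for sharpness of the trichotomy, that (A),(B),(C) do carry genuine $\F$-Sarkisov links — constructed as Families~4,~3 and~1 in~\ref{geometry-main-theorem} — so the third alternative is non-vacuous.
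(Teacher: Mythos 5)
Your argument is correct and is essentially the paper's own proof: Lemma~\ref{esign} gives $e\le 0$, Lemma~\ref{lem!-k} applied to $-K_X=(2+e-a-b-c)L+M$ gives $a+c<2+e$, and the enumeration under $0\le a\le b\le c$ leaves exactly (A), (B), (C) once $(a,b,c;e)=(0,0,0;0)$ is discarded via the $\P^1\times dP_2$ computation in Theorem~\ref{2ray}. Your explicit solution $m=1$, $n=a+c-e-2$ of $-K_X\sim mD_3-nD_u$ merely makes the paper's inequality transparent, and your identification of the three cases with Families 4, 3 and 1 (rather than the 5, 2 and 1 cited in the paper's follow-up remark) is the one consistent with the type-$(ii)$ normal forms.
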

\begin{proof}
Suppose the given 2-ray game on $\F$ does restrict to a Sarkisov link on $X$.
In particular, $X$ has terminal singularities, so $e \le 0$ by Lemma~\ref{esign}.
If $e<0$, Lemma~\ref{esign} requires $S_k(u,v)t^2\in f$, where $S_k$ is a general polynomial with variables $u,v$ of degree $-e=k>0$. The numerology presented in Table~\ref{table!numerology}, shows that $-K_X\sim (2-k-a-b-c)L+M$. This, together with Lemma~\ref{lem!-k}, gives the inequality $k+a+c<2$. But this can be satisfied only if $k=1$ and $a=b=c=0$, which is the case~\eqref{eq!e=-1}.\\
In the case $e=0$, a similar argument using the result of Lemma \ref{lem!-k} forces $a+c<2$, and this leads immediately to cases~(\ref{eq!e=01},\ref{eq!e=02}) or $e=a=b=c=0$. but this case gets eliminated by Theorem~\ref{2ray}.\end{proof}

In fact, all solutions (\ref{eq!e=01}--\ref{eq!e=-1}) provide Sarkisov links when $X$ is general; these are respectively families No. 5, 2 and 1 in Table~\ref{table!dP2list}.\\

\subsection{Families embedded in Type $(i)$ scrolls}
Let us recall that the variable with ratio weight zero is fixed to be $x$ throughout this part.\\
The following lemma forces strong restrictions on $f$, the defining polynomial of $X$. It uses the condition on the singularities of $X$.

\begin{lm}\label{lin} Let $X\subset\F$ be a hypersurface of $\F$ of a Type~$(i)$, defined by the polynomial $f$ as
\[\left(\begin{array}{c}-e\\4\end{array}\right)\subset\left(\begin{array}{cccccc}
1&1&0&-a&-b&-c\\
0&0&1&1&1&2\end{array}\right)\qquad,\]
where $a,b,c>0$. If there is no term of the form $S_d(u,v)x^kl(y,z,t)$ in the equation of $f$, then $X$ is not terminal, where $l$ is either a linear form on $y,z,t$ or is a constant.\end{lm}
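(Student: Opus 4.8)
The plan is to argue exactly as in the proof of Lemma~\ref{esign}: I will produce an entire curve of singular points on $X$ and then invoke the fact that $3$-fold terminal singularities are isolated \cite{miles-canonical}. The hypothesis is a purely combinatorial constraint on the Newton polygon of $f$, so the first step is to translate it into a statement about the vanishing of $f$ along the section $\Gamma_x=(y=z=t=0)$.

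First I would rewrite the hypothesis monomial by monomial. Every monomial of $f$ has $\mu$-degree $4$, hence has the shape $u^p v^q x^i y^j z^k t^m$ with $i+j+k+2m=4$. Writing it as $\big(u^p v^q\big)\,x^i\,\big(y^j z^k t^m\big)$, such a monomial is of the excluded form $S_d(u,v)\,x^k\,l(y,z,t)$ with $l$ constant or linear precisely when $j+k+m\le 1$; homogeneity forces these to be exactly the four types $x^4,\ x^3y,\ x^3z,\ x^2t$ (times a polynomial in $u,v$). Therefore the assumption ``$f$ contains no such term'' is equivalent to the clean statement that \emph{every} monomial of $f$ has total degree at least $2$ in $y,z,t$, i.e. $j+k+m\ge 2$.

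Next I would examine $X$ along $\Gamma_x$. Because $a>0$ makes $x$ the unique coordinate of ratio weight $0$, the curve $\Gamma_x$ is a genuine $\mathbb{P}^1_{u:v}$, and it lies in the smooth locus of $\F$ since it is disjoint from $\Sing(\F)=\Gamma_t$. Choosing the chart $x=1$, $v=1$ (legitimate because the weight vectors of $x$ and $v$ are independent, so the $(\C^*)^2$-action is free there and the chart is smooth, with coordinates $u,y,z,t$), the polynomial $f$ restricts to a function $F(u,y,z,t)$ in which every monomial still has $(y,z,t)$-degree $\ge 2$. I then read off at once that $F$ vanishes on $\Gamma_x$ and that all of $\partial F/\partial u,\partial F/\partial y,\partial F/\partial z,\partial F/\partial t$ vanish identically along $y=z=t=0$: differentiation lowers the $(y,z,t)$-degree by at most one, so each partial still consists of monomials of $(y,z,t)$-degree $\ge 1$. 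Hence $\Gamma_x\subset\Sing(X)$; covering the remaining point of $\Gamma_x$ by the symmetric chart $u=1$ shows the singular locus contains the whole curve, so $X$ has non-isolated singularities and \cite{miles-canonical} gives that $X$ is not terminal.

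The argument is short, and the only point requiring care is the bookkeeping in the combinatorial translation: verifying that the four forbidden monomial types genuinely exhaust the locus $\{j+k+m\le 1\}$ of the Newton polygon, and that $a>0$ is exactly what guarantees $\Gamma_x$ is an honest curve in the smooth locus of $\F$ (so that ``singular along $\Gamma_x$'' really yields non-isolated singularities rather than being absorbed into the $\tfrac12(1,1,1)$-locus $\Gamma_t$). I do not anticipate a genuine obstacle; the whole content lies in recognising that the forbidden terms are precisely the ones that would give $f$ a nonzero part of order $\le 1$ transverse to $\Gamma_x$.
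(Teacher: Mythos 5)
Your proof is correct and follows essentially the same route as the paper's: both exhibit the curve $\Gamma=(y=z=t=0)$ inside $\Sing(X)$ and conclude via the fact that $3$-fold terminal singularities are isolated. Your version merely makes explicit the "easy computations" the paper omits (the identification of the forbidden monomials as exactly those of $(y,z,t)$-degree $\le 1$ and the chart calculation showing all first partials vanish along $\Gamma$), and treats the $e=0$ and $e>0$ cases uniformly where the paper separates them.
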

\begin{proof} By Theorem~\ref{2ray}, $f$ must include at least a monomial with no $u$ or $v$ in it. This already means $e\geq 0$. Let $\Gamma$ be the curve defined by $(y=z=t=0)$. If $e=0$, then $x^4\in f$ and there is nothing to prove. If $e>0$, then $\Gamma\subset X$ and in fact by easy computations one could see that $\Gamma\subset\Bs|D|$. If there is no term of the form $S_d(u,v)x^kl(y,z,t)$ in $f$, then $X$ is singular along $\Gamma$. In particular, the singular locus of $X$ is not isolated and hence $X$ cannot be terminal.\end{proof}

If $a,b,c$ are all nonzero, then by Theorem~\ref{2ray} $f$ must include at least one pure monomial in the $x,y,z,t$ variables. But this monomial cannot be $x^4$, as if otherwise holds, then Lemma~\ref{lem!-k} implies $a+c<2$ which cannot be satisfied for any pair of positive integers $a$ and $c$. Hence $abc\neq 0$ implies $e\neq 0$.

On the other hand, if one of $a,b,c$ is zero, then Proposition~\ref{ro3} implies $e=0$. If only two of $a,b,c$ is zero, then irreducibility of $X$ forces $e=0$. The case $a=b=c=0$ has been considered in Theorem~\ref{th!dP2main}.\\

\noindent The following families have already been studied in Theorem~\ref{th!dP2main}.
\[X\in\left(\begin{array}{c}0\\4\end{array}\right)\subset\left(\begin{array}{cccccc}
1&1&0&0&-1&-1\\0&0&1&2&1&1\end{array}\right)\]
\[X\in\left(\begin{array}{c}0\\4\end{array}\right)\subset\left(\begin{array}{cccccc}
1&1&0&0&0&-1\\0&0&1&1&2&1\end{array}\right)\]
\[X\in\left(\begin{array}{c}0\\4\end{array}\right)\subset\left(\begin{array}{cccccc}
1&1&0&0&-1&-1\\0&0&1&1&2&1\end{array}\right)\]

Now we consider the families with $e>0$. We will specify each family by a sequence of positive integers correspond to $(a,b,c;e)$ which represent the following:
\[X\in\left(\begin{array}{c}-e\\4\end{array}\right)\subset\left(\begin{array}{cccccc}
1&1&0&-a&-b&-c\\0&0&1&1&1&2\end{array}\right)\]
Note that the columns of the action matrix of $\mathcal{F}$ are not necessarily in order. But the $2$-ray game is played each time after considering the appropriate order.\\
We also introduce two numbers $n$ and $\kappa$, which will simplify our notation, by
\[n=a+b+c,\qquad \kappa=2+e-a-b-c\qquad.\]

 Note that the number $\kappa$ is associated to the degree of the anticanonical class of $X$ and determines it uniquely as $-K_X\sim \kappa L+M$. Let us recall that $L$ is the divisor linearly equivalent to $(u=0)$ and $M$ is the one equivalent to $(x=0)$.\\
We will be considering every $X$ defined by $(a,b,c;e)$ by varying $n\in\mathbb{N}$ and spot families which link to a different Mori fibre space. The cases $n=0, 1, 2$ have already been analysed.\\

\noindent$\bullet\quad n=3$

\noindent The only option for $n=3$ is when $a=b=c=1$. By Lemma~\ref{esign} $e\leq c$, which can only be satisfied by $e=1,2$. The analysis of the case $(1,1,1;1)$ is the Family~7 in Table~\ref{table!dP2list}.

A general $X$ defined by $(1,1,1;2)$ is not terminal as it does not not satisfy conditions of Lemma~\ref{lin}.\\

\noindent $\bullet\quad n=4$

This case has only two possibilities: $(1,1,2;e)$ and $(1,2,1;e)$. By Lemma \ref{lin} we must have $e\leq 2$. If $e<2$, for both cases $X$ fails to satisfy Lemma~\ref{lem!-k}. Remaining cases provide $\F$-Sarkisov links to other Mori fibre spaces. These are Families~8 and 9 in Tables~\ref{table!dP2list}.\\

\noindent $\bullet\quad n=5$

Different partitions of $5$ allow us to have $(1,1,3;e)$, $(1,3,1;e)$, $(1,2,2;e)$ or $(2,2,1;e)$. For the first two cases, $e$ cannot be less than 3 as otherwise it fails to fulfil the criteria of Lemma~\ref{lem!-k}. It also cannot be more than 3 because of the condition imposed by Lemma~\ref{lin}. A similar argument for the other two cases bounds $e$ to be equal to 2.\\
However, $(1,3,1;3)$ does not have Picard number two by Proposition~\ref{ro3}. $(1,2,2;2)$ also fails to satisfy Lemma~\ref{lem!-k} condition. The only remaining cases win to provide $\F$-Sarkisov links form Families~10 and 11 in Table~\ref{table!dP2list}.\\

\noindent $\bullet\quad n=6$

Possible partitions of 6 give three candidates $(1,1,4;e)$ , $(1,2,3;e)$ , $(2,2,2;e)$. Applying numerical conditions imposed by Lemma \ref{lem!-k}, Lemma~\ref{lin} and Proposition~\ref{ro3}, and running the elimination process, we are left with the $(1,1,4;4)$ and $(1,2,3;3)$. In Lemma~\ref{1,1,4,4}, a reason for failure of $(1,1,4;4)$ is given. The case $(1,2,3;3)$ is precisely the Family~12 in Table~\ref{table!dP2list}.

\begin{lm}\label{1,1,4,4} Let $X\subset\F$ be defined by
\[\left(\begin{array}{c}-4\\4\end{array}\right)\subset\left(\begin{array}{cccccc}
1&1&0&-1&-1&-4\\0&0&1&1&1&2\end{array}\right)\qquad,\]
with variables $u,v,x,y,z,t$ and equation $f$. Then a general $X$ has Picard number strictly bigger than 2.\end{lm}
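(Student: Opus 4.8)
The plan is to follow the strategy of Proposition~\ref{ro3}: I will show that for these weights the defining polynomial forces the divisor $(t=0)\cap X$ to be reducible, and then produce a third independent class in $\N^1(X)$.

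First I would write down the monomials of bidegree $(-4,4)$ and group them by the power of $t$. This gives
\[ f = Q_4(y,z) + t\,q(u,v,x,y,z) + t^2 S_4(u,v), \]
where $Q_4$ is a binary quartic in $y,z$, the coefficient $q$ has bidegree $(0,2)$ (so $q=\lambda x^2+x\,\ell(u,v)m(y,z)+Q'(u,v)Q''(y,z)$), and $S_4$ is a quartic in $u,v$. The only fact I really need is that the $t$-free part $f|_{t=0}=Q_4(y,z)$ involves neither $x$ nor $u,v$: every monomial of bidegree $(-4,4)$ of $t$-degree $0$ is a binary quartic in $y,z$. This is exactly what changes relative to the neighbouring, well-behaved family $(0,1,1,3;3)$ (Family~10), where the analogue of $f|_{t=0}$ still carries $x$ and $u,v$ and hence remains irreducible.

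The key consequence is that $(t=0)\cap X$ splits. Over $\C$ a general binary quartic factors as $Q_4(y,z)=\prod_{i=1}^4(y-\zeta_i z)$ with distinct $\zeta_i$, and since $f|_{t=0}=Q_4(y,z)$, substituting $t=0,\ y=\zeta_i z$ into $f$ yields $Q_4(\zeta_i z,z)=0$. Hence each surface $S_i=(t=0,\ y-\zeta_i z=0)$ lies inside $X$, and $(t=0)\cap X=\bigcup_{i=1}^4 S_i$ is a union of four distinct prime divisors of the $3$-fold $X$. The class $[(t=0)|_X]=\sum_i[S_i]$ lies in the rank-two image of $\Pic(\F)\to\Pic(X)$ spanned by $L=(u=0)$ and $M=(x=0)$, so it suffices to show that one component, say $S_1$, has class outside $\langle L,M\rangle$. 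I would do this by restricting to a general fibre $F_0$ of $\varphi\colon X\to\P^1_{u:v}$. Since the $t^2S_4$ term makes the singular locus of a general $X$ equal to the finitely many points of $X\cap\Gamma_t$ lying over $(S_4=0)$, a general $F_0$ is a smooth degree $2$ del Pezzo surface $X_4\subset\P(1,1,1,2)$; on it $L|_{F_0}\equiv 0$, $M|_{F_0}=-K_{F_0}$ by adjunction, and $S_1|_{F_0}=\ell_1$ is the line $(t=0,\ y=\zeta_1 z)\cong\P^1_{x:z}$. A quick computation gives $\ell_1\cdot(-K_{F_0})=1$, and adjunction on $F_0$ then forces $\ell_1^2=-1$; the Gram determinant of $\{\ell_1,-K_{F_0}\}$ is $(-1)(2)-1^2=-3\neq 0$, so $[\ell_1]$ and $[-K_{F_0}]$ are independent in $\N^1(F_0)$. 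Testing a relation $a[L]+b[M]+c[S_1]=0$ against $F_0$ kills $L$ and gives $b=c=0$, and then $[L]\neq 0$ forces $a=0$; thus $[L],[M],[S_1]$ are independent and $\rho_X\geq 3$.

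The main obstacle is this final step: I must guarantee that the general fibre is a genuine smooth $dP_2$, so that the intersection pairing and adjunction are available, and that $S_1$ meets $F_0$ in the single rational curve $\ell_1$ rather than in something degenerate. Confirming $\ell_1^2=-1$ amounts to checking that $(y=\zeta_1 z)$ is a bitangent of the branch quartic of the double cover $F_0\to\P^2_{x:y:z}$, which holds because $Q_4$ vanishes on that line and the fibre equation there factors as $t\,(c_0 t+q)$, splitting the cover into two sections. As an alternative to the fibre argument, one could instead run the intersection-matrix computation of Proposition~\ref{ro3} verbatim with the triple $\{L,M,S_1\}$ and three suitable test curves; the fibre restriction is cleaner because it makes the new class $[S_1]$ visible as a $(-1)$-curve on the del Pezzo surface.
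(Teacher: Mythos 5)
Your proof is correct, and it rests on the same structural observation as the paper's: for the weights $(1,1,4;4)$ every monomial of bidegree $(-4,4)$ that is free of $t$ is a binary quartic in $y,z$, so $(t=0)\cap X$ breaks into four surfaces $S_i$, one of which supplies a class beyond $\langle L,M\rangle$. Where you diverge is in how independence is certified. The paper first performs the Atiyah flop $X\dashrightarrow X_1$ afforded by $x^2t\in f$ (so that each $S_i$ becomes a plane $\P^2_{u:v:x}$), chooses one such plane $D$ together with $L$ and $M$, and exhibits three explicit curves whose $3\times 3$ intersection matrix against $L,M,D$ has full rank --- i.e., it reruns the computation of Proposition~\ref{ro3} on the flopped model. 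You instead stay on $X$ and restrict to a general fibre $F_0$, identifying $S_1|_{F_0}$ as one of the two $(-1)$-curves lying over a bitangent of the branch quartic and using the nondegenerate Gram matrix of $\{\ell_1,-K_{F_0}\}$; this trades the toric intersection computation for classical $dP_2$ geometry and avoids the flop entirely. Both are complete arguments: yours makes the geometric source of the extra class more transparent (a line pair splitting in the double cover), while the paper's is more uniform with the elimination machinery used elsewhere in Section~5. The two points you flag as needing care --- smoothness of the general fibre and nondegeneracy of $S_1|_{F_0}$ --- do hold here: the coefficient $S_4(u,v)$ of $t^2$ is a general binary quartic, so $\Sing X$ consists of the four quotient points on $\Gamma_t$ and generic smoothness applies, and $S_1$ meets a general fibre transversally in the single reduced line $\ell_1$.
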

\begin{proof} The proof here is the standard method used in Proposition~\ref{ro3}. The only difference here is that instead of working with $X$ we consider $X_1$, obtained by flopping a curve in $X$. Considering the 2-ray game of $X$ restricted from that of $\F$, there is an Atiyah flop on $X$ because we have a term $x^2t\in f$, which allows one to eliminate $t$ in a neighbourhood of $\Gamma_x$. As $X_1$ is obtained by flopping a curve in $X$, they have isomorphic Picard groups. Hence $\rho_{X_1}>2$ implies $\rho_X>2$.

In order to finish the proof, we need to show that there are at least three divisors on $X_1$, which are linearly independent. We specify three divisors below and then conclude by proving they have non-linearly dependent intersections with three specific curves inside $X_1$.
After a suitable change of coordinates we can assume $f=yz(y-z)(y-\lambda z)+t(x^2+g)$ (for some fixed cross ratio $\lambda$), where $g$ is a polynomial of bi-degree $(0,2)$. Setting $t=0$ in $X_1$ leaves 4 divisors above the four roots $0, 1, \lambda, \infty$ of the quartic in $y,z$, each of them a divisor in $X_1$ isomorphic to $\P^2_{u:v:x}$ . Let $D$ be the divisor defined by $(y=1, z=t=0)$ and suppose $L\sim(u=0)$ and $M\sim(x=0)$ are two other divisors of $X_1$. We show that these divisors are linearly independent.

Define three curves on $X_1$ by
\[C_1=(v=x=f=0),\quad C_2=(v=z=f=0),\quad C_3=(x=y=t=0)\qquad.\]

Computing the intersections leads to
\[\left(\begin{array}{ccc}
C_1.L&C_1.M&C_1.D\\
C_2.L&C_2.M&C_2.D\\
C_3.L&C_3.M&C_3.D\end{array}\right)=
\left(\begin{array}{ccc}
0&2&1\\
1&2&0\\
1&1&0
\end{array}\right)\quad.\]
This matrix has full rank and this completes the proof.
\end{proof}

\noindent $\bullet\quad n=7$

Considering different partitions of $7$ and applying the numerical elimination process as before, it turns out that there is only one family of three-folds for which a general member is not birationally rigid, which is $(1,2,4;4)$. This forms Family~13 in Table~\ref{table!dP2list}.

The following lemma shows that we only need to consider cases where $n\leq 7$.

\begin{lm} Any $X$ with $n>7$ does not link to any other Mori fibre space by an $\F$-link.\end{lm}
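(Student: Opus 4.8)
The plan is to show that the four necessary conditions already extracted for an $\F$-link are jointly incompatible with $n=a+b+c>7$, so that the lemma reduces to pure numerology. First I would record the constraints that any $X$ admitting an $\F$-link must satisfy. As noted before this lemma, $e=0$ lands in one of the already-treated families and a vanishing weight forces $e=0$ by irreducibility and Proposition~\ref{ro3}, so we may assume $a,b,c>0$ and $e>0$. Terminality then gives two bounds: Lemma~\ref{esign} yields $e\le 2c$, while Lemma~\ref{lin} forces a monomial $S_d(u,v)x^k\ell(y,z,t)$ in $f$; the only candidates $x^3y,\,x^3z,\,x^2t$ require respectively $a\ge e$, $b\ge e$, $c\ge e$, so this reads $\max(a,b,c)\ge e$. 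Next, $\rho_X=2$ together with Proposition~\ref{ro3} requires $f\notin(x_3,x_4)$, i.e.\ the presence of a monomial in $u,v,x,x_2$ only. Finally, Lemma~\ref{lem!-k} and the remark after it force $-K_X\in\Int(\MMob(X))$, which in coordinates reads $\kappa>r_3$, where $\kappa=2+e-n$ is the ratio weight of $-K_X$ and $r_3$ is the ratio weight of $x_3$.

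The crux is that the Proposition~\ref{ro3} monomial and the $-K_X$-interior condition pull $e$ in opposite directions, and the decisive split is by which fibre variable is $x_2$. I would divide according to the position of the ratio weight $-c/2$ of $t$ relative to $-a,-b$, equivalently the $\preceq$-order of $y,z,t$: the three cases $c\le 2a$, $2a\le c\le 2b$, and $c\ge 2b$. In the first case $x_2=t$ and $x_3=y$, so the required pure monomial may be taken to be $t^2$ (available precisely because $e\le 2c$), while the interior condition becomes $e>b+c-2$; combined with $\max(a,b,c)\ge e$ this gives $\min(b,c)\le 1$, and a short computation yields $n\le 5$. In the remaining two cases $x_2=y$, so the only pure monomial available is an $x^py^q$ with $p+q=4$, which exists only when $4a\ge e$. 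Feeding this upper bound on $e$ against the interior lower bound $e>n-2+r_3$ (with $r_3=-c/2$, resp.\ $r_3=-b$) and the order relations $c\le 2b$, $a\le b$ forces $a=1$, hence $e\le 4$, and then only finitely many $(a,b,c)$ survive; these recover exactly Families~8,\,10,\,12,\,13 (all with $n\le 7$), together with the numerically admissible $(1,1,4;4)$ that is discarded by the separate Picard computation of Lemma~\ref{1,1,4,4}.

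Carrying out these three elementary computations, every simultaneous solution of the four constraints has $n\le 7$; hence no $X$ with $n>7$ can satisfy all of them at once, and so none admits an $\F$-link. The routine part is the arithmetic inside each case, which I would organise by using the interior inequality to bound $e$ from below and the Proposition~\ref{ro3} monomial to bound it from above, then reading off that the two are compatible only for small $(a,b,c)$. The main obstacle, and the step easiest to get wrong, is the correct translation of Proposition~\ref{ro3} into a monomial condition in each ordering: one must read off $x_2,x_3,x_4$ from the $\preceq$-order, including the boundary cases $c=2a$, $c=2b$, $a=b$ where ratio weights coincide, and keep the $x^py^q$ versus $x^pt^q$ alternative on the correct side. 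It is exactly the failure of this pure monomial, rather than any failure of terminality, that eliminates the large-$n$ families such as $(1,3,6;6)$, where $-K_X$ is still interior and $X$ is still terminal but $f\in(z,t)$ forces $\rho_X\ge 3$.
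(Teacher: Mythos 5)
Your proof is correct and follows essentially the same route as the paper: it assembles exactly the same four necessary conditions (the terminality bounds of Lemma~\ref{esign} and Lemma~\ref{lin}, the monomial condition forced by Proposition~\ref{ro3}, and the requirement $-K_X\in\Int(\MMob(X))$ from Lemma~\ref{lem!-k}) and shows they are jointly incompatible with $n>7$. The only difference is presentational — you carry out explicitly, via the case split on the position of $t$ in the $\preceq$-order, the arithmetic that the paper compresses into a single assertion, and your observation that it is Proposition~\ref{ro3} rather than any failure of terminality or of the interior condition that eliminates cases such as $(1,3,6;6)$ correctly identifies the step the paper leaves implicit.
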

\begin{proof} Let $X$ be defined by
\[\left(\begin{array}{c}-e\\4\end{array}\right)\subset\left(\begin{array}{cccccc}
1&1&0&-\alpha_1&-\alpha_2&-\alpha_3\\0&0&1&\beta_1&\beta_2&\beta_3\end{array}\right)\qquad,\]
 where $\{\beta_1,\,\beta_2,\,\beta_3\}=\{1,\,1,\,2\}$ and variables are in order $u=v\prec x\preceq x_1\preceq x_2\preceq x_3$. Lemma~\ref{lin} implies $e\in\{\alpha_i-m\, |\, 1\leq i\leq 3\, ,\, m=0,1\}$. By adjunction $\displaystyle{-K_X\sim (2-m+\alpha_i-\Sigma\alpha_j)L+M}$. To fulfil $-K_X\in\Int(\MMob(X))$, the requirement of Lemma~\ref{lem!-k}, we must have 
 \[m+\alpha_1+\alpha_2+\alpha_3-\alpha_i<2+\frac{\alpha_2}{\beta_2}\qquad.\]
Proposition~\ref{ro3}, together with Lemma~\ref{lem!-k} and Theorem~\ref{2ray}, shows that this inequality  has no solution for any choice of $m$ and $i$. \end{proof}

\section{Cubic surface fibrations over $\P^2$}
In this section we consider a similar construction and provide a list of non-rigid families for cubic surface fibrations over $\P^2$. The arguments are very similar and we do not repeat them for this case.
\begin{defi}\label{4fold dP3} \emph{A 4-fold cubic fibration over $\P^2$ is a normal, irreducible, projective, complex variety $X$ such that
\begin{enumerate}[(a)]
    \item $X$ is $\Q$-factorial with at worst terminal singularities,
    \item $\Pic X \cong \mathbb{Z}^2$,
    \item there exists an extremal morphism of fibre type $\varphi \colon X \rightarrow \mathbb{P}^2$, and
    \item the generic fibre of $\varphi$ is a degree 3 del Pezzo surface.
\end{enumerate} 
We denote this by $dP_3\slash\P^2$.}\end{defi}

Let $\F$ be a weighted bundle over $\P^2$ defined by
\begin{enumerate}[(i)]
\item $\Cox(\F)=\C[u,v,w,x,y,z,t]$,
\item $I_{\F}=(u,v,w)\cap(x,y,z,t)$,
\item $(\C^*)^2$ action defined by
\begin{equation}\left(\begin{array}{rrrrrrr}
1&1&1&\alpha&\beta&\gamma&\delta\\
0&0&0&1&1&1&1
\end{array}\right)\quad,\end{equation}
for $\alpha,\beta,\gamma,\delta\in\Z$.
\end{enumerate}

\subsection{Construction as hypersurfaces}\label{constructiondP3} Without loss of generality we can assume that matrix above is of the form
\begin{equation}\label{matrix of 4-fold dP3}\left(\begin{array}{rrrrrrr}
1&1&1&0&-a&-b&-c\\
0&0&0&1&1&1&1
\end{array}\right),
\end{equation}
where $a\leq b\leq c$ are non-negative integers. In particular, the variables are in the order $u=v\prec x\preceq y\preceq z\preceq t$.

We denote the basis of $\Pic(\mathbb{F})$ by $L\,,\,M$, with sections $u\in H^0(\F,L)$ and $x\in H^0(\F,M)$.\\ 
Let $D\in|4M+dL|$ be a divisor in $\F$ for $d\in\Z$ and suppose $X\subset\F$ is a hypersurface defined by $X=(f = 0)\subset\F$ for a general $f\in\O_{\F}(D)$. The aim is to study the birational geometry of those $X$ specified by $(a,b,c;d)$, which satisfy the conditions of Definition~\ref{4fold dP3}.

\subsection{$dP_3\slash\P^2$ models}

Here we find those $(a,b,c;d)$ for which the 3-fold $X$ forms a degree 3 del Pezzo surface fibration over $\P^2$, as in Definition~\ref{4fold dP3}.

\begin{lm}\label{d>0} Let $X\subset\F$ be a general hypersurface defined as in~\ref{constructiondP3} by sequence of integers $(a,b,c;d)$, where $0\leq a\leq b\leq c$ and $d>0$. Then a general $X$ is a $dP_3\slash\P^2$.\end{lm}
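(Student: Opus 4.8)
The plan is to verify the four conditions of Definition~\ref{4fold dP3} for a general hypersurface $X$ of type $(a,b,c;d)$ with $d>0$. The governing idea is that, because $d>0$, the anticanonical divisor $-K_X$ sits strictly inside the mobile cone and the fibration structure onto $\P^2$ is the \emph{given} contraction $\varphi$ (coming from the $\P^2_{u:v:w}$ factor), so we do not even need to run a two-ray game: the del Pezzo fibration is present from the start. First I would establish condition~(c) and~(d) together. The projection $\varphi\colon X\to\P^2_{u:v:w}$ is the restriction of the toric morphism $\F\to\P^2$ contracting the fibre directions $x,y,z,t$. Since $f$ has fibre degree $4$ in the weights $(1,1,1,1)$ of $x,y,z,t$, the fibre of $\varphi$ over a general point of $\P^2$ is a cubic surface $(f=0)\subset\P^3_{x:y:z:t}$, which is a degree~$3$ del Pezzo surface; this gives~(d), and $\varphi$ is clearly of fibre type giving the fibration in~(c).

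Next I would address the singularities in condition~(a). Exactly as in Lemma~\ref{esign} and Lemma~\ref{lin} of the $dP_2$ case, the only locus where $\F$ is singular is the stratum where the high-weight variable vanishes to the appropriate order, and the condition $d>0$ forces the defining polynomial to carry the monomials needed to keep $X$ away from, or transverse to, this locus. Concretely, I would show that for $d>0$ the base locus of $|D|$ has codimension at least two and that a general $f$ contains the pure-fibre monomials (e.g.\ a term like $t^3x$ or $x^3$-type terms) guaranteeing that $X$ meets $\Sing(\F)$ only in isolated quotient points of type $\tfrac{1}{2}(1,1,1)$, which are terminal; by Bertini, $X$ is otherwise quasismooth. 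This gives terminality; $\Q$-factoriality follows because $X$ inherits it from the ambient toric $\F$ away from these isolated terminal points.

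The main obstacle, and the technically substantial step, is condition~(b), namely $\Pic(X)\cong\Z^2$. This is the analogue of Theorem~\ref{piX-general}, and I would import the same machinery verbatim: one computes $\MMob(\F)$ as in Proposition~\ref{generators-of-mob}, checks that $(-d,4)\in\Int(\MMob(\F))$ (here using $d>0$ crucially, so that $-K_X$ lies in the interior), and then runs the Lefschetz–Kodaira argument of Lemma~\ref{u-homology} through Proposition~\ref{H2=Z2}. The Lefschetz hyperplane theorem (Theorem~\ref{lefschetz}) applied to the complement $\U=\F-X$ forces the relevant compactly-supported cohomology to vanish, the comparison exact sequence transports $\H^2(\F)\cong\Z^2$ to $\H^2(X)\cong\Z^2$, and the generalised Kodaira vanishing (Theorem~\ref{vanishing}) together with Serre duality kills $\H^1(X,\O_X)$ and $\H^2(X,\O_X)$; the exponential sequence then yields $\Pic(X)\cong\H^2(X,\Z)\cong\Z^2$. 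The one dimension-sensitive check is that for a $4$-fold $X$ the numerical bounds on $D(\varphi)$ and the range $i<\dim X$ in Theorem~\ref{vanishing} still deliver the vanishing in the correct degrees; since $\dim X=3$ here (a $3$-fold fibred over $\P^2$), the numerology matches the $dP_2$ case almost identically, and this is where I would spend the most care. Assembling~(a)--(d) completes the proof that a general such $X$ is a $dP_3\slash\P^2$.
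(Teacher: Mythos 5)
Your overall strategy (check the four conditions of Definition~\ref{4fold dP3}; get smoothness from Bertini; get $\Pic(X)\cong\Z^2$ from the Lefschetz--Kodaira machinery of Theorem~\ref{pic4fold}) is the same as the paper's, whose proof is essentially three lines: for $d>0$ every monomial of bidegree $(d,3)$ must involve $u,v,w$, so $f=uf_1+vf_2+wf_3$, the linear system is base point free, Bertini gives $X$ smooth, and Theorem~\ref{pic4fold} gives the Picard group. However, your treatment of condition~(a) is based on a wrong picture. In the $dP_3\slash\P^2$ construction of~\ref{constructiondP3} the fibre weights are $(1,1,1,1)$, so $\F$ is a smooth $\P^3$-bundle over $\P^2$: there is no curve $\Gamma_t$, no index-$2$ stratum, and no $\tfrac12(1,1,1)$ points for $X$ to meet. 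Importing Lemma~\ref{esign} and Lemma~\ref{lin} here is importing structure that does not exist. Worse, the mechanism you propose -- ``a general $f$ contains the pure-fibre monomials'' -- is exactly what fails when $d>0$: a monomial $u^iv^jw^kx^ly^mz^nt^p$ of bidegree $(d,3)$ has $i+j+k=d+am+bn+cp>0$, so there are \emph{no} pure-fibre monomials, and that is precisely why the base locus is empty (the common zeros of $u,v,w$ lie in the irrelevant ideal) and why $X$ is smooth rather than merely terminal with quotient points.

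Second, your ``dimension-sensitive check'' for condition~(b) is carried out with the wrong dimension: $X$ here is a $4$-fold (Definition~\ref{4fold dP3} says so explicitly; a surface fibration over $\P^2$ has dimension $4$, and $\F$ is a $5$-fold), not a $3$-fold as you assert. Since you single out the numerology of Theorem~\ref{lefschetz} and Theorem~\ref{vanishing} as the step deserving the most care, getting $\dim X$ wrong there is not cosmetic: the vanishing ranges $i<\dim X$ and the degrees in which $\H^\bullet_c(\U)$ must vanish all shift by one. Relatedly, the hypothesis of Theorem~\ref{pic4fold} is that the class of $X$ itself, namely $dL+3M$, lies in $\Int(\MMob(\F))$ -- which holds for $d>0$ since $dL+3M=(d+3b)L+3(-bL+M)$ -- not that $-K_X$ does; the position of $-K_X$ is the separate condition governing whether the last step of the $2$-ray game is extremal (Theorem~\ref{-kdP3}), which is irrelevant to this lemma. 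With the singularity analysis replaced by the base-point-freeness argument and the dimension corrected, your proof reduces to the paper's.
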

\begin{proof} If $d>0$, then the defining polynomial of $X$ is of the form $f=uf_1+vf_2+wf_3$ for some polynomials $f_i$ with bidegree $(d-1,3)$. It implies that the base locus of the linear system $|3M+dL|$ is empty and hence by the Bertini theorem $X$ is smooth. By Theorem~\ref{pic4fold} below, $\Pic(X)\cong\Z^2$ and hence $X$ is a $dP_3\slash\P^2$.\end{proof}

\begin{lm}\label{d=0} Let $X\subset\F$ be defined by $(a,b,c;0)$ as before. Then $X$ forms a $dP_3\slash\P^2$ for any triple $(a,b,c)$ except for $a=b=c=0$.\end{lm}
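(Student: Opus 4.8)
The plan is to verify the four conditions of Definition~\ref{4fold dP3} for a general $X$, treating the excluded triple $(0,0,0)$ separately as the degenerate \emph{product} case. First I would dispose of $a=b=c=0$: here $\F\cong\P^2\times\P^3$ and a bidegree $(0,3)$ polynomial involves only $x,y,z,t$, so $X\cong\P^2\times S$ for a fixed cubic surface $S\subset\P^3$. As in the proof of Theorem~\ref{2ray}, the K\"unneth-type splitting $\Pic(X)\cong\Pic(\P^2)\oplus\Pic(S)$ (\cite{Hart}, III.12.6) forces $\rho(X)>2$, so $X$ fails condition (b) and is not a $dP_3\slash\P^2$; this is precisely why the triple is excluded.

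From now on assume $(a,b,c)\neq(0,0,0)$, so $c\geq 1$. Conditions (c) and (d) are immediate from the construction: $\varphi$ is the restriction to $X$ of the projection $\F\to\P^2$, a morphism of fibre type whose generic fibre is the vanishing of a general cubic form in $\P^3$, hence a smooth cubic surface, i.e.\ a degree $3$ del Pezzo surface. Since $-K_X\sim(3-a-b-c)L+M$ restricts to $\O(1)$ on each fibre (the class $L$ being a pullback from the base), $-K_X$ is $\varphi$-ample, and once (b) is known this makes $\varphi$ an extremal contraction with $\rho(X/\P^2)=1$.

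For condition (a) I would show that the linear system $|3M|$ defining $X$ is base point free when $(a,b,c)\neq(0,0,0)$. The section $x^3$ forces $\Bs|3M|\subseteq(x=0)$; moreover, for each of $y,z,t$ the system contains all products of the corresponding cube $y^3$, $z^3$, $t^3$ with an arbitrary form of degree $3a$, $3b$, $3c$ in $u,v,w$, and (as $c\geq 1$, and as forms of positive degree on $\P^2_{u:v:w}$ have no common zero) these have no common vanishing point. Hence a common zero of all sections of $|3M|$ would force $x=y=z=t=0$, which is excluded by the irrelevant ideal. Thus $\Bs|3M|=\emptyset$, and Bertini's theorem shows that a general $X$ is smooth; in particular $X$ is $\Q$-factorial with at worst terminal singularities (indeed smooth).

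It remains to establish (b), namely $\Pic(X)\cong\Z^2$, and this is where the one genuine subtlety lies. Computing $\MMob(\F)=\left<L,D_z\right>$ exactly as in Proposition~\ref{generators-of-mob}, with $D_z$ of bidegree $(-b,1)$, the hypersurface class $3M=(0,3)=3b\,L+3\,D_z$ lies in the \emph{interior} of $\MMob(\F)$ precisely when $b>0$. In that case the Lefschetz-and-vanishing argument packaged in Theorem~\ref{pic4fold} applies verbatim and yields $\Pic(X)\cong\Z^2$. The remaining possibility is $a=b=0$, $c\geq 1$, where $3M$ sits on the boundary ray $\left<D_z\right>$ and the interior argument fails; the hard part is to handle this boundary case, which I would do exactly as Families~2 and~3 were handled in Theorem~\ref{piX-general}, by observing that the type~$\III$ end of the $2$-ray game of $\F$ restricts to a divisorial contraction of $X$ onto a Fano $4$-fold of Picard rank one, whence $\Pic(X)\cong\Z^2$. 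This boundary case is the only real obstacle; everywhere else smoothness together with Theorem~\ref{pic4fold} gives the conclusion cleanly.
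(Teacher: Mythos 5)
Your proof is correct and follows essentially the same route as the paper's: emptiness of $\Bs|3M|$ plus Bertini for smoothness, the K\"unneth splitting to exclude $(0,0,0)$, and Theorem~\ref{pic4fold} for $\Pic(X)\cong\Z^2$. The one place you go beyond the paper is worth noting: the paper simply invokes Theorem~\ref{pic4fold} "for all other cases", but that theorem assumes $X\in\Int(\MMob(\F))$, which fails exactly when $a=b=0$ and $c\ge 1$ (the class $(0,3)=3D_z$ sits on the boundary ray); you correctly isolate this case and settle it by observing that the type~$\III$ end of the $2$-ray game exhibits $X$ as a divisorial contraction onto a Picard-rank-one Fano, mirroring how Families~1--3 are treated in Theorem~\ref{piX-general}. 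This patches a genuine (if small) gap in the paper's one-line argument rather than deviating from its strategy.
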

\begin{proof} It is easy to check that for any $(a,b,c)$, the base locus of $|3M|$ is empty and therefore $X$ is smooth. If $a=b=c=0$, then the Picard number of $X$ is strictly bigger than $2$. By Theorem~\ref{pic4fold} $\Pic(X)\cong\Z^2$ for all other cases.\end{proof}

\begin{lm}\label{d<0} Let $X\subset\F$ be a hypersurface defined by $(a,b,c;d)$ as in~\ref{constructiondP3}, where $0\leq a\leq b\leq c$ and $d<0$. Then $X$ is a $dP_3\slash\P^2$ if 
\begin{enumerate}[(i)]
\item the defining polynomial of $X$ includes a monomial of the form $g_k(u,v,w)x^2L(y,z,t)$, where $g_k$ is a homogeneous polynomial in variables $u,v,w$ of degree $k\geq 0$ and $L$ is a linear form in $y,z,t$, and
\item one of the following holds
\[d\leq 3a\leq 3b\quad\text{or}\quad d<3a\leq 3b\qquad.\]
\end{enumerate}\end{lm}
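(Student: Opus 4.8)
The plan is to establish the four conditions of Definition~\ref{4fold dP3} for general such $X$, treating the terminality clause of (a) as the real content and handling the rest by citation and soft arguments. Condition (b), $\Pic X\cong\Z^2$, I would take from Theorem~\ref{pic4fold}; $\Q$-factoriality then follows because a Grothendieck--Lefschetz argument identifies $\Cl X$ with $\Cl\F\cong\Z^2$, so $\Cl X=\Pic X$. For (c) and (d), write $\rho\colon\F\to\P^2$ for the bundle projection given by $(u,v,w)$ and set $\varphi=\rho|_X$. Since $D=3M+dL$ restricts to $\O(3)$ on each fibre $\rho^{-1}(p)\cong\P^3$, the fibre $\varphi^{-1}(p)$ is a cubic surface in $\P^3$; for general $p$ its defining cubic is general except for the monomials of negative $(u,v,w)$-degree, and I will check below that it is nonetheless smooth, hence a degree~$3$ del Pezzo surface. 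Granting $\rho(X)=2=\rho(\P^2)+1$ from (b), $\varphi$ is extremal of fibre type with $-K_X$ relatively ample. Everything thus reduces to pinning down $\Sing X$ and proving it terminal.

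First I would locate the singular locus. Because all fibre weights equal $1$, $\F$ is the projectivization of the split bundle $\O\oplus\O(-a)\oplus\O(-b)\oplus\O(-c)$ over $\P^2$, hence smooth, so $X$ is a hypersurface in a smooth $5$-fold and Bertini gives $\Sing X\subseteq\Bs|D|$. As $d<0$, no pure fibre monomial $x^3$ of bidegree $(d,3)$ is available, so $x^3$ is absent from $f$ and the plane $\Gamma=(y=z=t=0)\cong\P^2_{u:v:w}$ lies in $X$. The plan is then to use the numerical hypothesis (ii) to bound the base monomials and conclude $\Bs|D|\subseteq\Gamma$, so that $X$ is automatically smooth off $\Gamma$ and only the behaviour along $\Gamma$ remains.

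Along $\Gamma$ I would set $x=1$ and use $y,z,t$ as transverse coordinates. The transverse linear part of $f$ is $G_a\,y+G_b\,z+G_c\,t$, where $G_a,G_b,G_c$ are the coefficients of $x^2y,\,x^2z,\,x^2t$, forms on $\P^2_{u:v:w}$ of degrees $d+a,\,d+b,\,d+c$ (a form of negative degree being understood as zero). Hypothesis (i) guarantees at least one of these is a nonzero form of nonnegative degree, so $X$ is smooth at the generic point of $\Gamma$ and $\Sing X\subseteq Z:=(G_a=G_b=G_c=0)\subset\Gamma$. The function of hypothesis (ii) is to force enough of the $G$'s to have nonnegative degree that, for general $f$, the common zero scheme $Z$ is finite: three general plane curves meet in no point and two general ones in finitely many. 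At each $P\in Z$ the transverse linear part vanishes, while for general $f$ the Hessian of $f$ in the five ambient coordinates is nondegenerate, so $P$ is an ordinary double point of the $4$-fold $X$. Blowing up $P$ in the smooth ambient $5$-fold produces a single exceptional divisor over $P$ of discrepancy $\dim X-\operatorname{mult}_P X=4-2=2>0$, whence $P$ is terminal. Thus $X$ has at worst isolated nodes, giving (a).

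I expect the main obstacle to be precisely this base-locus bookkeeping: rigorously proving $\Bs|D|\subseteq\Gamma$ and that $Z$ is finite of the expected double-point type requires a careful monomial analysis, organised by the competing inequalities between $d$ and $a,b,c$ encoded in hypothesis (ii) (and clarifying the intended form of those inequalities). Once $\Sing X$ is identified with finitely many nodes and $\Pic X\cong\Z^2$ is supplied by Theorem~\ref{pic4fold}, the smoothness of the generic cubic fibre and the remaining conditions of Definition~\ref{4fold dP3} follow at once.
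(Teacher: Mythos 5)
Your overall skeleton --- Bertini plus $\Bs|3M+dL|=\Gamma:=(y=z=t=0)$ to confine $\Sing X$ to $\Gamma$, condition (i) controlling the transverse linear part $G_a y+G_b z+G_c t$ along $\Gamma$, and $\Pic(X)\cong\Z^2$ quoted from Theorem~\ref{pic4fold} --- is essentially the paper's route (the paper additionally disposes of the degenerate cases $a=0$ and the boundary case $-d=3a=3b$ separately, which your reading of hypothesis (ii) implicitly excludes). But there is a genuine gap at the decisive step: your claim that hypothesis (ii) forces $Z=(G_a=G_b=G_c=0)$ to be \emph{finite} is false. The intended inequality $-d\le 3a$ only guarantees that the pure cubic monomials $y^3,\dots,t^3$ are available (which is what pins $\Bs|D|$ to $\Gamma$); it says nothing about two of the three coefficients $G_a,G_b,G_c$, whose degrees are $d+a\le d+b\le d+c$, being nonzero. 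For $(a,b,c;d)=(1,1,3;-2)$ or $(1,1,4;-3)$ or $(1,2,4;-3)$ --- all families in the paper's tables covered by this lemma --- one has $d+a,d+b<0<d+c$, so $G_a=G_b=0$ identically and $Z=(G_c=0)$ is a plane curve of positive degree. Then $X$ is singular along that curve, ``three general plane curves meet in no point'' is not applicable, the Hessian is degenerate along the curve direction, and your conclusion ``$X$ has at worst isolated nodes'' fails. The blow-up/discrepancy computation you give is correct for an isolated ODP but does not cover these families.

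The paper's own proof explicitly countenances this: it concludes only that $X$ ``is either smooth or it is singular only at finitely many points or along a line.'' The way to close the case your argument forecloses is to recall that in dimension $4$ terminal singularities need not be isolated, and to check that at a general point of the curve $Z$ the transverse singularity is a rank-$4$ quadric cone (locally a $3$-fold node times a disc, coming from $G_c\cdot t$ plus the available quadratic terms $xy^2, xyz, xz^2$), which is terminal; one must then also deal with the finitely many worse points of $Z$. So your proposal needs a third case (one nonzero $G$ of positive degree) with its own terminality argument, rather than the blanket ``finitely many nodes'' conclusion. The remaining ingredients (smoothness of the generic cubic fibre, $\Q$-factoriality via Lefschetz) are plausible but are asserted rather than checked, much as in the paper.
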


\begin{proof} If $a=b=c=0$, then $|3M+dL|$ has no sections. If $a=b=0$ and $c>0$, then $f=t.g$, hence $X$ is reducible. If only $a=0$ and $bc\neq 0$, then a similar argument to the one in Proposition~\ref{ro3} shows that $\rho_X>2$.

Let $0<a\leq b\leq c$ and suppose one of the $d\leq 3a\leq 3b$ or $d<3a\leq 3b$ holds. Then Theorem~\ref{pic4fold} implies that $\Pic(X)\cong\Z^2$. If $d=3a=3b$, then by a similar argument to Lemma~\ref{1,1,4,4}, $\rho_X>2$ and hence $X$ is not a $dP_3\slash\P^2$.

Now suppose $X$ is defined such that $0<a\leq b\leq c$. If the polynomial $f$ has no term of type $g_k(u,v,w)x^2L(y,z,t)$, then a generic point on the surface $S=(y=z=t=0)\subset X$ has multiplicity at least 2. Therefore $X$ is singular along a 2-dimensional space. Therefore $X$ is not terminal. If $f$ has such a term, then it is either smooth or it is singular only at finitely many points or along a line. \end{proof}

Combining Lemma~\ref{d>0}, Lemma~\ref{d=0} and Lemma~\ref{d<0} enables us to give the following characteristic theorem. 

\begin{theorem}\label{dP3/p2} Let $X\subset\F$ be a general hypersurface defined by $(a,b,c;d)$. Then one of the following holds:
\begin{enumerate}[(1)]
\item If $d>0$, then $X$ is non-singular and satisfies conditions stated in Definition~\ref{4fold dP3} .
\item If $d=0$, then $X$ is a $dP_3$ fibration by Definition~\ref{4fold dP3} for any triple $(a,b,c)$ except for $a=b=0$, $c>1$.
\item $d<0$ and
\begin{enumerate}[(a)]
\item $3c<-d$, $|4M+dL|$ has no sections.
\item $3a\leq 3b< -d\leq 3c$ and $X$ is reducible, hence not a $dP_3$ fibration.
\item $3a<-d\leq 3b\leq 3c$ and $X$ has Picard number $\rho_X>2$, hence does not satisfy conditions of a $dP_3$ fibration.
\item $-d\leq 3a$. In this case, $X$ is a $dP_3$ fibration over $\P^2$ only if the equation of $f$ has a term of the form $g_k(u,v,w)x^2L(y,z,t)$ in it, where $g_k$ is a homogeneous polynomial in variables $u,v,w$ of degree $k\geq 0$ and $L$ is linear.
\end{enumerate}
\end{enumerate}
\end{theorem}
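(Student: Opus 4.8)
The statement is a bookkeeping synthesis of Lemmas~\ref{d>0}, \ref{d=0} and \ref{d<0}, so the plan is to organise the argument by the sign of $d$ and, in the negative range, by the position of $-d$ relative to the thresholds $3a\le 3b\le 3c$. First I would dispose of (1) and (2) by direct citation: for $d>0$, Lemma~\ref{d>0} gives the shape $f=uf_1+vf_2+wf_3$, whence $\Bs|3M+dL|=\emptyset$, $X$ is smooth and $\Pic(X)\cong\Z^2$; for $d=0$, Lemma~\ref{d=0} records exactly the exceptional triples for which the Picard number exceeds $2$. This reduces everything to $d<0$, which is where the real content lies.

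For the negative range the uniform tool is an explicit count of the monomials of bi-degree $(d,3)$. A monomial $u^iv^jw^kx^py^qz^rt^s$ lies in $H^0(\F,3M+dL)$ precisely when $p+q+r+s=3$ and $aq+br+cs=(i+j+k)-d\ge -d$, and I would read off the three failing regimes from this single inequality. If $-d>3c$, then $aq+br+cs\le c(q+r+s)\le 3c<-d$ for every admissible exponent vector, so the linear system is empty, establishing (a). If $3b<-d\le 3c$, then every monomial must satisfy $aq+br+cs\ge -d>3b$; since a monomial with $s=0$ obeys $aq+br\le b(q+r)\le 3b$, each monomial carries a factor of $t$, so $t\mid f$ and $X$ is reducible, establishing (b). If $3a<-d\le 3b$, the same inequality forbids any monomial with $r=s=0$ (such a monomial would need $aq\ge -d$, yet $aq\le 3a<-d$), so $f$ lies in the ideal $(z,t)$ and the locus $(z=t=0)$ sits inside $X$ as a divisor; then the method of Proposition~\ref{ro3}, exhibiting three curves whose intersection matrix against $L$, $M$ and this divisor has full rank, yields $\rho_X>2$, establishing (c).

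The remaining regime $-d\le 3a$ is case (d), and here I would simply invoke Lemma~\ref{d<0}: the presence of a monomial $g_k(u,v,w)x^2L(y,z,t)$ is precisely what keeps the singular locus of a general $X$ off the surface $(y=z=t=0)$, securing terminality, while the Picard-number half of Lemma~\ref{d<0} delivers $\Pic(X)\cong\Z^2$; its absence leaves $X$ singular in codimension one, violating condition~(a) of Definition~\ref{4fold dP3}.

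The step I expect to be the main obstacle is not any individual estimate but the behaviour at the boundary values $-d\in\{3a,3b,3c\}$, where adjacent regimes meet. In particular the threshold $-d=3a$ straddles the Picard-rank failure of (c) and the terminal family of (d), and the degenerate sub-case $-d=3a=3b$ must be handled by the flopped variant of the Picard-number computation (Lemma~\ref{1,1,4,4}) rather than by Proposition~\ref{ro3} directly. The essential care is therefore in verifying that the four monomial regimes partition $d<0$ with no gap or overlap and that each boundary is attributed to the lemma that actually controls it.
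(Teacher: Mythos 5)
Your proposal is correct and takes essentially the same route as the paper, which proves this theorem simply by combining Lemmas~\ref{d>0}, \ref{d=0} and \ref{d<0}; your explicit monomial count in bi-degree $(d,3)$ fleshes out the case split that the paper leaves implicit in the proof of Lemma~\ref{d<0}, and your handling of the boundary $-d=3a=3b$ via the method of Lemma~\ref{1,1,4,4} matches the paper. One minor slip: in case (d) the surface $(y=z=t=0)$ has codimension \emph{two} in the $4$-fold $X$, not one, but this still violates terminality, so your conclusion stands.
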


\subsection{$dP_3\slash\P^2$ as Mori dream spaces}
In what follows we show that unlike dimension 3, all $dP_3$ fibrations constructed above have a 2-ray game which is the restriction of that of the ambient space we consider.  The idea is based on the following lemma of Kawamata, Matsuda and Matsuki.

\begin{lm}\label{kawamata-flip}\emph{(\cite{kawamata-flip}~Lemma~5.1.17)} If $\psi\colon X^-\rightarrow X^+$ is a flip (flop or antiflip) with exceptional loci $E^-\subset X^-$ and $E^+\subset X^+$, then the pair $(\dim E^-,\dim E^+)$ is exactly one of the pairs
\[(2,1)\qquad (2,2)\qquad (1,2)\qquad.\]\end{lm}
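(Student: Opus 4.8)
The plan is to reduce the statement to a single exclusion and then to feed it into the fibre–locus inequality for extremal contractions. Since $\psi$ is a flip, flop or antiflip, it sits in a diagram of small birational morphisms $f^-\colon X^-\to Z$ and $f^+\colon X^+\to Z$ with $\rho(X^\pm/Z)=1$, each an isomorphism in codimension one, and $E^\pm=\Exc(f^\pm)$; in the present setting the $X^\pm$ are $4$-folds. First I would record the two elementary bounds. Because $f^\pm$ are small, $E^\pm$ has codimension at least $2$, hence $\dim E^\pm\leq 2$; and because $\psi$ is not an isomorphism, a finite birational morphism to a normal variety being an isomorphism (Zariski's main theorem) forces each $f^\pm$ to contract something of positive dimension, so $\dim E^\pm\geq 1$. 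Thus $(\dim E^-,\dim E^+)$ lies in $\{(1,1),(1,2),(2,1),(2,2)\}$, and the entire content of the lemma is to rule out the pair $(1,1)$.

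Next I would exclude $(1,1)$. Suppose $\dim E^-=\dim E^+=1$; then each $E^\pm$ is a curve, and since a small contraction must strictly drop the dimension of its exceptional locus, the common centre $W=f^-(E^-)=f^+(E^+)\subset Z$ is a finite set of points. Localising at a point $w\in W$, the fibres $F^\pm=(f^\pm)^{-1}(w)$ then coincide with $E^\pm$, so $\dim F^-=\dim F^+=1$. I would derive a contradiction from the Ionescu--Wi\'sniewski fibre–locus inequality on the $K$-negative side of the modification, namely $f^-$ when $\psi$ is a flip and $f^+$ when it is an antiflip: a $K$-negative extremal contraction satisfies $\dim E+\dim F\geq\dim X+\ell-1\geq\dim X=4$ for a general nontrivial fibre $F$, where $\ell\geq 1$ is the length of the contracted ray. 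With $\dim E=\dim F=1$ this reads $2\geq 4$, which is absurd. Hence for a flip the $K$-negative locus $E^-$ has $\dim E^-=2$, giving $(2,1)$ or $(2,2)$, and symmetrically an antiflip gives $(1,2)$ or $(2,2)$; in either case $(1,1)$ cannot occur.

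The remaining case, and the step I expect to be the main obstacle, is the flop, where $K_{X^\pm}$ is numerically $f^\pm$-trivial and so supplies no canonical negativity to feed into the inequality above. Here I would instead exploit that a flop is performed relative to a $\mathbb{Q}$-Cartier divisor $A$ which is $f^-$-ample (such an $A$ exists because $\rho(X^-/Z)=1$), together with the fact that the flopping locus is covered by rational curves, in order to run the same fibre–locus estimate for the $A$-negative contraction; alternatively, one proves directly the symmetric bound $\dim F^-+\dim F^+\geq\dim X-1=3$ over a common point of the centre, which already contradicts $\dim F^-=\dim F^+=1$ and dispatches all three types uniformly. The genuinely delicate points are the validity of the fibre–locus inequality for the singular ($\mathbb{Q}$-factorial terminal) varieties occurring here and the control of the length $\ell$; both are exactly what the theory of lengths of extremal rays underlying the cited Lemma~5.1.17 of Kawamata--Matsuda--Matsuki provides, so I would invoke that machinery rather than reprove it.
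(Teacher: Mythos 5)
The paper offers no proof of this statement at all: it is quoted verbatim from Kawamata--Matsuda--Matsuki, so the ``paper's proof'' is the citation. Measured against the standard proof of the cited result, your skeleton is right --- $1\le\dim E^\pm\le 2$ because the two contractions $f^\pm\colon X^\pm\to Z$ are small, projective, birational and not isomorphisms, and the whole content is to exclude $(1,1)$, which follows from the inequality $\dim E^-+\dim E^+\ge \dim X-1=3$. The gap is in how you propose to get that inequality. For flips and antiflips the Ionescu--Wi\'sniewski fibre--locus inequality does the job but is much heavier than necessary and imports exactly the two difficulties you flag (its validity on singular $\Q$-factorial terminal $4$-folds and the control of $\ell$). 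For flops your first substitute --- feeding an $f^-$-ample divisor $A$ into that inequality --- does not work: the inequality is a statement about deformations of rational curves measured against $-K_X$, and an arbitrary ample divisor cannot be swapped in for the canonical class. Your second substitute, the symmetric bound $\dim E^-+\dim E^+\ge\dim X-1$, is precisely the content of the lemma being proved, so invoking ``the machinery underlying Lemma~5.1.17'' at that point makes the flop case circular rather than proved.

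The standard argument is elementary, uniform in all three cases, and uses no $K$-negativity, no lengths of extremal rays, and no smoothness. Let $\Gamma$ be the normalisation of the closure of the graph of $\psi$ in $X^-\times X^+$, with induced projections $p\colon\Gamma\to X^-$ and $q\colon\Gamma\to X^+$. Since $\psi$ is not an isomorphism and $X^-$ is normal and $\Q$-factorial, $p$ is a non-isomorphic projective birational morphism onto a normal $\Q$-factorial variety, so $\Exc(p)$ is nonempty and every component of it is a divisor (Koll\'ar--Mori, Corollary~2.63); fix such a component $D$, of dimension $\dim X-1=3$. Because $p$ is an isomorphism over $X^-\setminus E^-$ we get $p(D)\subseteq E^-$; and $q(D)\subseteq E^+$, since otherwise the nonempty open set $D\cap q^{-1}(X^+\setminus E^+)$ would be carried by $p=\psi^{-1}\circ q$ into $X^-\setminus E^-$, contradicting $p(D)\subseteq E^-$. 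Finally $\Gamma$ maps finitely onto a closed subvariety of $X^-\times X^+$, so $D$ maps finitely into $p(D)\times q(D)$ and $3=\dim D\le\dim E^-+\dim E^+$. Combined with $1\le\dim E^\pm\le 2$ this leaves exactly the pairs $(2,1)$, $(2,2)$, $(1,2)$. If you restructure your write-up around this graph-and-purity argument, the case division by type of modification disappears entirely.
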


\begin{theorem}\label{2ray4fold} Let $X\subset\F$ be a cubic fibration over $\P^2$ obtained from one of the cases in Theorem~\ref{dP3/p2}. Then the Type~\III\ or \IV\ 2-ray game of $\F$ induces the game on $X$.\end{theorem}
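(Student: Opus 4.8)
The plan is to show that for each case of Theorem~\ref{dP3/p2} the restriction of the ambient 2-ray game to $X$ realises the 2-ray game of $X$, using exactly the strategy already developed for the threefold case in Section~4.2, only now in one higher dimension. The key structural input is Theorem~\ref{thm!BZ}, which describes the ambient game of $\F$ as a chain of small modifications $\Psi_i$ between the toric models $\F_i$ followed by a final fibre-type or divisorial contraction $\Phi'$, together with the fact (Lemma~\ref{kawamata-flip}) that any flip/flop/antiflip has exceptional loci of dimension $1$ or $2$ on each side. First I would set up the ambient game explicitly: writing the variables in the order $u=v\prec x\preceq y\preceq z\preceq t$ and grouping them into the partition $Y_0,\dots,Y_r$ by ratio weight, each step $\Psi_i$ is a toric modification which contracts a locus of the form $\P^2_{u:v:w}\times(\text{weighted }\P)$ and extracts another such product. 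The restriction of $\Psi_i$ to $X$ is then governed by whether the extracted weighted projective space meets $X$ in a variety of the expected smaller dimension.

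The heart of the argument, and the step I expect to carry the real content, is to verify condition~(i) of Definition~\ref{Xi}: that each $\psi_i=\Psi_i|_{X_{i-1}}$ is an isomorphism in codimension one. Here I would invoke the term-existence conditions already forced by Theorem~\ref{dP3/p2}(3)(d) and its analogues: in every surviving case the defining polynomial $f$ contains a monomial of the form $g_k(u,v,w)\,x^2 L(y,z,t)$, and more generally a pure monomial in the fibre variables. As in the threefold analysis (cf.\ Families~9--13 of Section~4.2), such a monomial lets one eliminate one fibre variable in a neighbourhood of the flipping/flopping locus via the implicit function theorem, so that the intersection of $X$ with the $2$-dimensional extracted toric locus is a genuine flipping/flopping curve (or small locus) rather than all of it. Concretely, if $\Psi_i$ extracts $\P^2_{u:v:w}\times\P(a,\dots)$, the presence of a pure fibre-monomial guarantees $X$ does not contain the relevant toric stratum, so $\psi_i$ is an isomorphism in codimension one; Lemma~\ref{kawamata-flip} then confirms the exceptional loci of $\psi_i$ have the correct dimensions to be a fourfold flip/flop/antiflip. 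I would run this case by case along the partition $Y_0,\dots,Y_r$, using the weight inequalities $0\le a\le b\le c$ to control which strata appear.

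It remains to check condition~(ii) of Definition~\ref{Xi}, that $\varphi=\Phi|_X$ and $\varphi'=\Phi'|_X$ are extremal contractions. The left-hand contraction $\varphi\colon X\to\P^2$ is the given fibre-type morphism of the $dP_3\slash\P^2$ structure, so it is extremal by hypothesis. For $\varphi'$ I would argue that $-K_X$ lies in the interior of $\MMob(X)$ — the fourfold analogue of the remark following Lemma~\ref{lem!-k}, which reduces to the numerical conditions $d\le 3a$ (resp.\ the strict inequalities) isolated in Theorem~\ref{dP3/p2} — so that running the restricted game terminates in a contraction of the final ray rather than stalling. Since Theorem~\ref{pic4fold} supplies $\Pic(X)\cong\Z^2$ and hence $\rho(X)=\rho(\F)=2$, every intermediate $X_i$ again has Picard number two, so each $\psi_i$ and the terminal $\varphi'$ are extremal. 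The main obstacle I anticipate is bookkeeping rather than conceptual: one must ensure in each of the cases of Theorem~\ref{dP3/p2} that the required pure fibre-monomial actually survives in a general $f$ of the prescribed bidegree, so that the elimination step is legitimate and no toric stratum is swallowed by $X$; this is precisely the role played by the hypothesis in part~(3)(d), and verifying it uniformly across the weight ranges is the step demanding care.
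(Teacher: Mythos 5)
Your overall architecture (verify conditions (i) and (ii) of Definition~\ref{Xi} step by step along the ambient game, then use Lemma~\ref{kawamata-flip}) is reasonable, but the mechanism you propose for the key step (i) is the wrong one, and it fails in several of the cases the theorem must cover. You argue that a pure monomial in the fibre variables (or a term $g_k(u,v,w)x^2L(y,z,t)$) lets you eliminate a variable near the flipping locus, \emph{so that $X$ does not contain the extracted toric stratum}, and only then conclude that $\psi_i$ is an isomorphism in codimension one. But for $d>0$ the defining polynomial is $f=uf_1+vf_2+wf_3$ and contains \emph{no} monomial purely in the fibre variables; the hypersurface $X_1$ then contains the entire $2$-dimensional extracted stratum $\P_{y:z:t}(\dots)$. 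Moreover the condition $g_k(u,v,w)x^2L(y,z,t)\in f$ from Theorem~\ref{dP3/p2} is only imposed when $d<0$, and its role there is to control the singularities of $X$ along $(y=z=t=0)$, not to govern the restriction of the game. So your argument, as stated, does not cover the cases $d\geq 0$ (e.g.\ Families 1--9 of Table~\ref{table!dP3list1}), where the premise of your elimination step is simply false.

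The point of the paper's proof --- and the reason the statement holds \emph{without} any term-existence hypotheses, in contrast with the threefold case --- is purely dimensional. The small modifications of the $5$-fold $\F$ contract and extract $2$-dimensional toric strata; their intersections with the $4$-fold $X$ therefore have dimension at most $2$, hence codimension at least $2$ in $X$, \emph{whether or not $X$ contains a whole stratum}. Every $\psi_i$ is automatically an isomorphism in codimension one, and Lemma~\ref{kawamata-flip} confirms that all the resulting dimension pairs $(2,1)$, $(2,2)$, $(1,2)$ are legitimate for a $4$-fold flip, flop or antiflip; the paper simply runs through the sign of $d$ recording which pair occurs. In the threefold setting the exceptional locus of a flip must be a curve, which is why pure-monomial conditions were needed there. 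You have transported that necessity into a dimension where it no longer applies, and in doing so introduced a hypothesis that fails for a large part of the classification.
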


\begin{proof} We prove the theorem case by case on the sign of $d$ and we show that in each case the conditions on the dimension of contracted loci by Lemma~\ref{kawamata-flip} are satisfied. 

Let $d>0$. If $a>0$, then the 2-ray game of $\F$ is continued be a flip which restricts to $X$ with dimension pair $(1,2)$. For $a=0$ and $b>0$, the situation is $(2,1)$ and for $a=b=0$ the game finishes by a divisorial contraction or a fibration; Which is fine as far as the 2-ray game of $X$ is concerned.

For $d=0$, If $a>0$ then the first step of the game of $\F$ induces an isomorphism on $X$ and the second step is of type $(2,1)$, divisorial contraction or fibration, respectively in cases $a,b$, $a=b<c$ and $a=b=c$.

If $a=0$, then the game continues with a $(2,1)$ or divisorial contraction or a fibration exactly as the previous case.

Let $d<0$. If $a>0$ then the 2-ray game of $\F$ restricts to $X$ by a $(2,1)$ or $(2,2)$.\end{proof} 

\begin{cor}\label{mobdP3} $X$ is a Mori dream space with $\Cox(X)=\Cox(\F)\slash(f=0)$. In particular $\MMob(X)$ is generated by $L$ and $D_z=(z=0)$.
\end{cor}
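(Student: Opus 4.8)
The plan is to transport the entire chamber structure of the toric ambient space $\F$ onto $X$ and then read off the mobile cone. First I would record the two ingredients that make $X$ a candidate Mori dream space in the sense of Definition~\ref{def-MDS}: being a $dP_3\slash\P^2$, it is $\Q$-factorial with terminal singularities, and by Theorem~\ref{pic4fold} its Picard group $\Pic(X)=N^1(X)\cong\Z^2$ is finitely generated, which gives condition (i). For condition (ii) the crucial input is Theorem~\ref{2ray4fold}: the Type~\III\ or \IV\ 2-ray game of $\F$ restricts to $X$, so that the small modifications $\Psi_i\colon\F_{i-1}\dashrightarrow\F_i$ induce modifications $\psi_i\colon X_{i-1}\dashrightarrow X_i$ which are isomorphisms in codimension one. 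These are precisely the finitely many maps $f_i\colon X\dashrightarrow X_i$ required by the definition.

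Next I would identify $N^1(X)_\R$ with $N^1(\F)_\R\cong\R^2$ through the restriction map, exactly as in the three-fold case; that this is an isomorphism follows from the Lefschetz-type comparison already used to prove Theorem~\ref{pic4fold}. Under this identification the decomposition of $\MMob(\F)$ into the nef chambers of the models $\F_i$ (the toric picture of Proposition~\ref{generators-of-mob}, and \cite{Hu}~Proposition~1.11) pulls back to a decomposition of the mobile cone of $X$. For a mobile divisor $B$ on $X$ its class lies in the chamber of some $X_i$, on which the corresponding class is the restriction of a semiample (GIT-ample) class from $\F_i$ and is therefore semiample; hence $B=f_i^*B_i$ with $B_i$ semiample, yielding condition (ii). Equivalently, surjectivity of the restriction $\H^0(\F,D)\twoheadrightarrow\H^0(X,D|_X)$ for $D$ in these cones — coming from the vanishing $\H^1(\F,\O_\F(D-X))=0$ via the generalised Kodaira and Lefschetz arguments — shows that the multigraded section ring of $X$ is the quotient of that of $\F$ by the single relation $f$, so $\Cox(X)=\Cox(\F)\slash(f=0)$. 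This finitely generated quotient, together with the data of step one, re-proves that $X$ is a Mori dream space by the criterion of \cite{Hu}.

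Finally I would compute the generators of $\MMob(X)$. By the four-fold analogue of Proposition~\ref{generators-of-mob} the pseudo-effective cone of $\F$ is $\left<L,D_t\right>$ and its mobile cone is the subcone $\left<L,D_z\right>$, since the ray $D_t=(t=0)$ is a fixed component of every class lying past $D_z$. Restricting to $X$: the system $|L|$ is base-point free, and the restriction of $|D_z|$ to $X$ has no fixed component provided $(z=t=0)\cap X$ has codimension at least two in $X$. Were this locus a divisor, the argument of Proposition~\ref{ro3} would force $\rho_X>2$, contradicting that $X$ is a $dP_3\slash\P^2$. Hence $\MMob(X)=\left<L,(z=0)\right>$, as claimed.

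The main obstacle is making precise that the chamber decomposition of $\MMob(X)$ coincides \emph{exactly} with the restriction of that of $\MMob(\F)$, rather than being strictly larger. The danger is illustrated by Example~\ref{unproj}, where $-K_X$ leaves $\MMob(\F)|_X$ and an extra unprojection into a larger scroll is required; the real content of Theorem~\ref{2ray4fold} is precisely that for the families of Theorem~\ref{dP3/p2} no such extraction occurs, so that $-K_X\in\Int(\MMob(X))$ and the restricted game already sweeps out the whole mobile cone. Granting that, everything else reduces to the cone-generator computation and the surjectivity-on-sections bookkeeping indicated above.
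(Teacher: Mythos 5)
The paper offers no written proof of this corollary: it is presented as an immediate consequence of Theorem~\ref{2ray4fold} together with the framework set up in \S\ref{mobile-cones}, where the Mori dream space property itself is obtained for free from \cite{BCHM}~Corollary~1.3.1 (a Mori fibre space over a Fano-type base is of Fano type, hence a Mori dream space), and the content of the corollary is then only that the chamber decomposition of $\MMob(X)$ is the restriction of that of $\MMob(\F)$. Your argument reaches the same conclusion but by a more hands-on route: you verify both conditions of Definition~\ref{def-MDS} directly from the explicit models $X_i$ supplied by Theorem~\ref{2ray4fold} and from the surjectivity of restriction on multigraded sections, rather than invoking BCHM and then only matching cones. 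That costs you some extra bookkeeping (the section-surjectivity step is the least detailed part of your write-up, though it is standard toric vanishing), but it buys an explicit presentation of $\Cox(X)$ rather than mere finite generation. Crucially, you correctly isolate the one point where the argument could fail --- that $\MMob(X)$ might strictly contain $\MMob(\F)|_X$, as happens in Example~\ref{unproj} in the threefold case --- and correctly locate its resolution in Theorem~\ref{2ray4fold}, whose proof checks that the final map of the restricted game is a divisorial contraction or fibration, so the mobile cone of $X$ terminates where that of $\F$ does. Your closing computation of the generators ($|L|$ free, $|D_z|$ without fixed component unless $(z=t=0)\cap X$ is a divisor, which is excluded by the argument of Proposition~\ref{ro3} since $\rho_X=2$) is exactly the fourfold analogue of Proposition~\ref{generators-of-mob} and of the unnumbered lemma at the end of \S\ref{mobile-cones}. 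I see no gap.
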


\subsection{Nonrigid families}

The following arguments eliminate cases that are not going to have an $\F$-link to another Mori fibre space. As a result a list of nonrigid families through their Type~$\III$ or $\IV$ Sarkisov links is given.

\begin{theorem}\label{-kdP3} If $-K_X\notin\Int(\MMob(X))$, then the last map of the 2-ray game of $X$ is not extremal.
\end{theorem}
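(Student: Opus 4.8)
The plan is to mirror the argument of Lemma~\ref{lem!-k}: I would produce an irreducible curve $C$ contracted by the final morphism $\varphi'$ of the $2$-ray game and show that $-K_X\cdot C\le 0$. Since a $K$-negative (Mori) extremal contraction must satisfy $-K_X\cdot C>0$ on every curve it contracts, exhibiting such a $C$ proves that $\varphi'$ is not extremal. First I would record the numerology. By Corollary~\ref{mobdP3} the mobile cone $\MMob(X)$ is spanned by $L$ and $D_z=(z=0)$, with $D_z\sim M-bL$; and by adjunction $-K_X\sim\kappa L+M$ with $\kappa=3-a-b-c-d$, so that the coefficient of $M$ in $-K_X$ equals $1$ (positive, because $X$ is cubic in the fibre direction).

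Next I would convert the hypothesis $-K_X\notin\Int(\MMob(X))$ into a linear relation. Writing a class in the basis $\{L,D_z\}$ of $\N^1(X)_\R$ as $sL+rD_z$, the interior of $\MMob(X)$ is cut out by $s>0$ and $r>0$. Since the coefficient of $M$ in $-K_X$ is $1>0$, the class $-K_X$ has $r=1$ and so cannot lie on the $L$-ray; hence $-K_X\notin\Int(\MMob(X))$ forces $s=\kappa+b\le 0$. Therefore $-K_X\sim mD_z-nL$ with $m=1>0$ and $n=-(\kappa+b)\ge 0$.

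Then I would locate the contracted curve. By the GIT description used in Lemma~\ref{u-homology}, the last arrow $\varphi'$ is the morphism attached to the boundary divisor $D_z$ of $\MMob(X)$, i.e. $D_z=(\varphi')^{*}A$ for an ample class $A$ on the image. Consequently every curve $C$ contracted by $\varphi'$ satisfies $D_z\cdot C=A\cdot(\varphi')_{*}C=0$, whether $\varphi'$ is a divisorial contraction ($|Y_r|=1$) or a fibration ($|Y_r|>1$). Concretely, as in the cases of Lemma~\ref{lem!-k}, one takes $C$ inside the contracted divisor $(t=0)$, respectively a fibre of $\varphi'$, cut out by the vanishing of the low-weight coordinates; the irrelevant ideal of the domain of $\varphi'$, which separates $(z,t)$ from the remaining variables, guarantees $z\neq 0$ along $C$ and hence $D_z\cdot C=0$. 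Moreover $L$ is the pullback of $\O_{\P^2}(1)$ and is nef, so $L\cdot C\ge 0$.

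Combining these, $-K_X\cdot C=m(D_z\cdot C)-n(L\cdot C)=-n(L\cdot C)\le 0$, so $\varphi'$ is not $K$-negative and therefore not extremal. The main obstacle I anticipate is the verification that $D_z\cdot C=0$ for an explicit $C$: this splits into the same several configurations of the ratio weights of $y,z,t$ that appear in Lemma~\ref{lem!-k} (whether $z$ is alone in its weight class, tied with $t$ so that $\varphi'$ is a fibration, or tied with $y$), since in each the irrelevant ideal of the domain of $\varphi'$, and hence the correct choice of vanishing coordinates defining $C$, changes. The clean pullback identity $D_z=(\varphi')^{*}A$ bypasses most of this bookkeeping, but matching it to the toric $2$-ray game relies on the restriction of $\Phi'$ to $X$ being again the morphism of $|D_z|$, which is precisely the content of Corollary~\ref{mobdP3}.
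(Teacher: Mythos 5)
Your proposal is correct and follows essentially the same route as the paper, whose proof of Theorem~\ref{-kdP3} is simply to repeat the argument of Lemma~\ref{lem!-k}: exhibit a curve contracted by $\varphi'$ on which $-K_X$ is non-positive, using $-K_X\sim mD_z-nL$ and the vanishing of $D_z$ on contracted curves. Your observation that $D_z=(\varphi')^{*}A$ lets you avoid re-running the case analysis on irrelevant ideals is a tidy packaging of the same computation, not a different method.
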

\begin{proof} This proof is similar to that of Lemma~\ref{lem!-k}.
\end{proof}

\begin{lm}\label{ak} If $d<0$, then $a+k\leq 2$.
\end{lm}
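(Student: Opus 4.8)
The plan is to play the existence of a terminal-ensuring monomial against the mobility requirement $-K_X\in\Int(\MMob(X))$, working under the standing hypothesis that $X$ is a $dP_3$ fibration whose $2$-ray game ends in an extremal contraction (the only situation relevant to producing an $\F$-link; recall that by Theorem~\ref{-kdP3} such a contraction forces $-K_X\in\Int(\MMob(X))$).

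First I would extract the bidegree relation hidden in $k$. Since $d<0$, Theorem~\ref{dP3/p2}(3)(d) (equivalently Lemma~\ref{d<0}) guarantees that the defining polynomial $f$ contains a monomial $g_k(u,v,w)\,x^2\,L(y,z,t)$ with $g_k$ homogeneous of degree $k\ge 0$ and $L$ a nonzero linear form; without it $X$ is singular along the surface $(y=z=t=0)$ and fails to be terminal. As $L$ is bihomogeneous it is supported on fibre variables of a single first-row weight $w\in\{a,b,c\}$ (it cannot involve $x$, since $g_kx^3$ would force $k=d<0$). Matching the bidegree of $g_kx^2L$ against the bidegree $(d,3)$ of $f$ gives $k-w=d$, that is $k=w+d$. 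Because the fibre weights satisfy $a\le b\le c$, we have $w\le c$ whichever variable occurs in $L$, and hence the uniform bound $k\le c+d$.

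Next I would turn the mobility requirement into a numerical inequality. By adjunction, using $-K_{\F}\sim(3-a-b-c)L+4M$ and $X\sim 3M+dL$, one gets $-K_X\sim(3-a-b-c-d)L+M$. By Corollary~\ref{mobdP3} the cone $\MMob(X)$ is spanned by $L$ and $D_z\sim M-bL$, so comparing ratio weights (Definition~\ref{weight-order}) the condition $-K_X\in\Int(\MMob(X))$ reads $3-a-b-c-d>-b$, i.e. $a+c+d<3$, hence $a+c+d\le 2$ as everything is integral. Splicing the two estimates, $a+k\le a+(c+d)=a+c+d\le 2$, which is the claim.

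I do not expect a genuine obstacle: the argument is bookkeeping once the two earlier results are in hand. The single point needing care is the first step, where I must track the sign in the bidegree matching — the variables $y,z,t$ carry first-row weights $-a,-b,-c$, so the weight enters $k=w+d$ with the correct sign — and, crucially, observe that the estimate is insensitive to which of $y,z,t$ appears in $L$; it is precisely the uniform inequality $w\le c$, rather than the exact value of $w$, that makes the final bound independent of that choice.
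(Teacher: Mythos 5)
Your proof is correct and follows essentially the same route as the paper's: adjunction plus the requirement $-K_X\in\Int(\MMob(X))$ (via Theorem~\ref{-kdP3} and Corollary~\ref{mobdP3}) yields $a+c+d<3$, while matching the bidegree of the terminality monomial $g_k(u,v,w)x^2L(y,z,t)$ yields $k\le c+d$, and combining gives $a+k\le 2$. If anything, your sign bookkeeping in the two inequalities is more careful than the paper's own wording of them.
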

\begin{proof} Using the adjunction formula, one can compute the anticanonical divisor of $X$ as $-K_X\sim(3+n-a-b-c)L+M$. Theorem~\ref{-kdP3} results in $-K_X\in\Int(\MMob(X))$, which holds if and only if $a+b+c-3-d<b$. This implies $a+c<3+d$.

On the other hand, from Theorem~\ref{dP3/p2} we have $d\leq c-k$. These two inequalities show that $a+k\leq 2$.
\end{proof}

\begin{cor}\label{c7} $c<7$.\end{cor}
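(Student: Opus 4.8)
The plan is to read off an upper bound for $c$ by combining the three numerical constraints already in hand. For any $dP_3\slash\P^2$ admitting an $\F$-link, Theorem~\ref{-kdP3} forces $-K_X\in\Int(\MMob(X))$ (otherwise the final map of the $2$-ray game is not extremal, so no link exists), so the first thing I would do is make this membership condition fully explicit. Adjunction gives $-K_X\sim(3-a-b-c-d)L+M$, and by Corollary~\ref{mobdP3} the cone $\MMob(X)$ is spanned by $L$ and $D_z=(z=0)\sim M-bL$. Rewriting $-K_X=(3-a-c-d)L+D_z$ in the basis $\{L,D_z\}$, interiority is equivalent to positivity of the $L$-coefficient, i.e. to the single inequality
\[
a+c+d<3,\qquad\text{equivalently}\qquad c<3-a-d .
\]

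Next I would split on the sign of $d$. If $d\ge 0$ the displayed inequality already yields $c<3-a-d\le 3$, so nothing further is needed in that range. The substance is the case $d<0$, and here I would invoke the terminality dichotomy of Theorem~\ref{dP3/p2}(3): since $X$ is genuinely a $dP_3\slash\P^2$, the subcases (a)--(c) are excluded (they produce, respectively, an empty linear system, a reducible $X$, and $\rho_X>2$), leaving only subcase (d). That subcase forces $-d\le 3a$, and in particular $a\ge 1$. At the same time Lemma~\ref{ak} gives $a+k\le 2$ with $k\ge 0$, so $a\le 2$.

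Finally I would feed these two bounds into the mobility inequality:
\[
c<3-a-d=3-a+(-d)\le 3-a+3a=3+2a\le 3+2\cdot 2=7 ,
\]
which is the assertion. The estimate is sharp: the extreme choice $a=2$, $-d=6$ sits exactly on the boundary, where $k=0$ and the monomial realising terminality is $x^2t$ with $c=6$.

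The only genuinely delicate step is the bookkeeping in the first paragraph — computing $-K_X$ and the ray $D_z$ correctly so that interiority collapses to the clean inequality $a+c+d<3$ — together with the verification that a negative $d$ really does leave subcase (d) of Theorem~\ref{dP3/p2} as the sole source of $dP_3\slash\P^2$'s, which is what makes the bound $-d\le 3a$ available. Once those are in place, the arithmetic is immediate and the two outer bounds ($a\le 2$ from Lemma~\ref{ak} and $-d\le 3a$ from the terminal case) pinch $c$ below $7$.
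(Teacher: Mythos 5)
Your argument is correct, and it reaches the bound by a route that differs from the paper's in one essential ingredient. Both proofs start from the interiority condition $a+c<3-d$ (your careful derivation of this from $-K_X\sim(3-a-b-c-d)L+M$ and $D_z\sim M-bL$ matches what Theorem~\ref{-kdP3} and Lemma~\ref{ak} use) and both invoke Lemma~\ref{ak} to get $a\le 2$. The difference is the third inequality that controls $-d$: the paper cites Theorem~\ref{2ray4fold} for $-d<c$, whereas you extract $-d\le 3a$ from the case division of Theorem~\ref{dP3/p2}(3), subcase (d) being the only one compatible with $X$ actually being a $dP_3\slash\P^2$. Your choice is the one that closes the argument: the chain $c<3-a-d\le 3+2a\le 7$ needs a bound on $-d$ that is linear in $a$, and $-d<c$ feeds $c$ back into its own upper bound without terminating (from $a+c<3-d$ and $-d\le c-1$ one only gets $a\le 1$, and no bound on $c$); moreover $-d<c$ is not even satisfied by several entries of Table~\ref{table!dP3list2}, e.g.\ family 37 with $-d=c=6$, so it should at best read $-d\le c$. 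In short, your proof supplies the "one can easily check" step with the inequality that actually does the work, and your sharpness check against family 37 ($a=2$, $-d=3a=6$, $c=6$) confirms the bound $c<7$ cannot be improved.
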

\begin{proof} Theorem~\ref{-kdP3} implies $a+c<3-d$. On the other hand, Theorem~\ref{2ray4fold} requires $-d<c$. One can easily check the inequality using these together with Lemma~\ref{ak}.\end{proof}

The inequalities above provide upper limits for $(a,b,c)$. Using these and other information provided in this section one can prove that Theorem~\ref{dP3!list} below has the complete list.

\begin{theorem}\label{pic4fold} Let $X\subset\F$ be a general $dP_3\slash\P^2$ as before. If $X\in\Int(\MMob(\F))$, then $\Pic(X)\cong\Z^2$.\end{theorem}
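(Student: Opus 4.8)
The plan is to prove $\Pic(X) \cong \Z^2$ for a general $dP_3\slash\P^2$ hypersurface $X \subset \F$ satisfying $X \in \Int(\MMob(\F))$ by following the same homological strategy used for the threefold case in Theorem~\ref{piX-general}. The essential point is to transfer topological information from the ambient toric variety $\F$ (or a small modification $\F_i$ of it) onto $X_i$, and then convert this into a statement about Picard groups via the exponential sequence. First I would establish the analogue of Lemma~\ref{H2F}: for the relevant model $\F_i$ obtained by running the Type~$\III$ or $\IV$ game of Theorem~\ref{2ray4fold}, we have $\H^2(\F_i) \cong \Z^2$. Since $\rho(\F_i) = 2$ and each $\F_i$ is a $\Q$-factorial toric variety with rational singularities, this follows from the short exact sequence $0 \to \Z \to \O_{\F_i} \to \O^*_{\F_i} \to 0$ together with the vanishing $\H^1(\F_i,\O_{\F_i}) = \H^2(\F_i,\O_{\F_i}) = 0$, which is obtained from the Generalised Kodaira vanishing (Theorem~\ref{vanishing}) applied to $-K_{\F_i}$ on a model where it is nef and big.

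Next I would prove the Lefschetz-type comparison between $\F_i$ and $X_i$. Set $\U_i = \F_i - X_i$ and use the map $\Phi_{|D|}$ associated to $D = 4M + dL$; restricting to $\U_i$ gives a proper map to affine space whose measure of deviation $D(\varphi)$ is at most $0$, because in dimension $5$ the exceptional locus of each small modification contracts at most a low-dimensional subspace while every stratum has codimension at least $2$. By Theorem~\ref{lefschetz} this forces the top homology groups of $\U_i$ to vanish in the appropriate range; precisely, I would show $\H_{k}(\U_i) = 0$ for $k = 7, 8$ (noting $\dim_\R \U_i = 10$ for the fourfold $X$), and then invoke Poincar\'e duality to get $\H^2_c(\U_i) = \H^3_c(\U_i) = 0$. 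Feeding this into the long exact sequence of the pair,
\[
\cdots \to \H^2_c(\U_i) \to \H^2(\F_i) \to \H^2(X_i) \to \H^3_c(\U_i) \to \cdots,
\]
yields $\H^2(X_i) \cong \H^2(\F_i) \cong \Z^2$.

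Finally I would prove the vanishing $\H^1(X_i,\O_{X_i}) = \H^2(X_i,\O_{X_i}) = 0$, exactly as in Lemma~\ref{Xvanishing}: on a model where $-K_{X_i}$ is nef and big, the pair $(X_i,0)$ is klt since $X_i$ is terminal, so Theorem~\ref{vanishing} and Serre duality give the result, and rationality of singularities lets one propagate it across the small modifications. Combining this with $\H^2(X_i) \cong \Z^2$ in the exponential sequence for $X_i$ identifies $\H^1(X_i,\O^*_{X_i}) = \Pic(X_i) \cong \H^2(X_i,\Z) \cong \Z^2$; since $X_i$ is isomorphic to $X$ in codimension $1$, we conclude $\Pic(X) \cong \Z^2$. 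I expect the main obstacle to be the bookkeeping in the Lefschetz step: one must verify that for \emph{every} numerical type appearing in Theorem~\ref{dP3/p2} there is a genuine model $\F_i$ in the $2$-ray game on which both $-K_{\F_i}$ is nef and big \emph{and} the deviation bound $D(\varphi) \le 0$ holds, which requires controlling the dimensions of the loci contracted by the flips, flops and anti-flips (governed by Lemma~\ref{kawamata-flip} and Theorem~\ref{2ray4fold}) rather than appealing to a single uniform model.
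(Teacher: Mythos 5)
Your proposal is correct and is essentially the paper's own proof: the paper simply states that one applies the same method as in Theorem~\ref{piX-general}, which is exactly the Lefschetz--Poincar\'e duality--exponential sequence argument you spell out, with the dimension shifts you indicate. The only thing you miss is the paper's remark that the bookkeeping you worry about at the end is actually \emph{easier} here than in the threefold case, since for these $dP_3\slash\P^2$ families the ambient $\F$ and the general $X$ are smooth.
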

\begin{proof} One can apply same method as in proof of Theorem~\ref{piX-general} to obtain this result. Note that the proof in this case is much easier as $\F$ and $X$ are smooth.\end{proof}

\begin{theorem}\label{list}\label{dP3!list}
 Consider a general hypersurface $X\subset \F$ with 
\[\left(\begin{array}{c}d\\3\end{array}\right)\subset\left(\begin{array}{cccccc}
1&1&0&-a&-a&-c\\0&0&1&1&1&1\end{array}\right)\quad,\]
where $0\le a\le b\le c$. 
If the the Type~\III\ or \IV\ 2-ray game of $X$ leads to another Mori fibre space, then the weights $(a,b,c;d)$ are among those appearing in the left-hand column of Table~\ref{table!dP3list1} and Table~\ref{table!dP3list2}.

The Sarkisov links generated in this way are described in the remaining columns of Tables~\ref{table!dP3list1} and~\ref{table!dP3list2}.\end{theorem}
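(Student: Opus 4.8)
The plan is to run, for cubic fibrations, exactly the numerical elimination of Section~\ref{proof-main-theorem}, now fed by the tools assembled above. Every weight system $(a,b,c;d)$ that can appear must simultaneously satisfy two independent batches of constraints: those that make $X$ an honest $dP_3/\P^2$, and those that force its Type~\III\ or \IV\ $2$-ray game to end in a genuine contraction rather than stay in the square class. Classifying the admissible $(a,b,c;d)$ is then a matter of intersecting these constraints and checking that the finitely many survivors are precisely the entries of Tables~\ref{table!dP3list1} and~\ref{table!dP3list2}.

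First I would record the constraints of the first batch from Theorem~\ref{dP3/p2}: for each sign of $d$ this fixes which relations between $d$ and $(a,b,c)$ are permissible, discarding the weights for which $|3M+dL|$ has no section, for which $f$ factors (so $X$ is reducible), or for which $\rho_X>2$, and in the range $d<0$ it further requires the monomial $g_k(u,v,w)x^2L(y,z,t)$ in $f$ to secure terminality; combined with Theorem~\ref{pic4fold} this guarantees $\Pic(X)\cong\Z^2$. For the second batch, Theorem~\ref{2ray4fold} says the $2$-ray game of $X$ is induced from that of $\F$, so by Corollary~\ref{mobdP3} the variety $X$ is a Mori dream space with $\MMob(X)=\langle L,D_z\rangle$, and the link can leave the square class only if the terminal morphism $\varphi'$ is an extremal contraction. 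By Theorem~\ref{-kdP3} this is equivalent to $-K_X\in\Int(\MMob(X))$, which the adjunction expression for $-K_X$ turns into the linear inequality recorded in Lemma~\ref{ak}.

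The decisive point is that these inequalities force finiteness. Lemma~\ref{ak} gives $a+k\le 2$ and Corollary~\ref{c7} gives $c<7$, so $(a,b,c;d)$ ranges over a finite box. I would then enumerate the candidates as in the $dP_2$ analysis, organising the bookkeeping by $n=a+b+c$: for each partition of $n$ into $0\le a\le b\le c$ the admissible range of $d$ is pinned between the terminality condition of Theorem~\ref{dP3/p2}(3) at one end and $-K_X\in\Int(\MMob(X))$ at the other, and any remaining candidate violating irreducibility or failing the $\rho_X=2$ test of the Proposition~\ref{ro3}-type argument is thrown out. The weight systems that survive are exactly those listed in the tables.

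The real work is not in any single inequality but in the finite case analysis together with the constructive verification needed to populate the right-hand columns of the tables. For each surviving $(a,b,c;d)$ one still has to display the essential monomials of a general $f$ --- the analogues of $t^2$, $x^2t$, $x^3z$ in the $dP_2$ families --- in order to certify that the restricted flips, flops and anti-flips behave as predicted by Lemma~\ref{kawamata-flip}, to locate the terminal cyclic quotient singularities, and to identify the Mori fibre space reached at the end of each link. As these verifications are entirely parallel to the ones carried out for the $dP_2/\P^1$ families in~\ref{geometry-main-theorem}, the expected presentation is to perform them compactly and point back to that analysis for the repeated mechanism.
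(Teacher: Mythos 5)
Your proposal follows essentially the same route as the paper, which itself only sketches this argument: it assembles the constraints from Theorem~\ref{dP3/p2} and Theorem~\ref{pic4fold} (so that $X$ is a genuine $dP_3/\P^2$), combines them with Theorem~\ref{2ray4fold}, Corollary~\ref{mobdP3}, Theorem~\ref{-kdP3}, Lemma~\ref{ak} and Corollary~\ref{c7} to confine $(a,b,c;d)$ to a finite box, and then defers the case-by-case enumeration and link verification to the mechanism already worked out for the $dP_2/\P^1$ families. The only caveat is that Theorem~\ref{-kdP3} gives $-K_X\in\Int(\MMob(X))$ as a necessary condition rather than an equivalence, but since the classification only needs the elimination direction this does not affect your argument.
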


\begin{landscape}

\begin{table}[ht]
\vspace{1.1cm}
\[\begin{array}{cc||c|c|c|c}
\text{No.}&(a,b,c;d)&\psi_1&\psi_2&\varphi^\prime&\text{new model}\\
\hline
\hline
1&(0,1,1;1)&\text{flip}&\text{n/a}&\text{fibration}&(Y_4\subset\P^4)\slash\P^1\\
\hline
2&(0,0,1;1)&\text{n/a}&\text{n/a}&\text{contraction}&\text{Fano }Y_4\subset\P^5\\
\hline
3&(0,0,0;1)&\text{n/a}&\text{n/a}&\text{fibration}&\text{conic bundle over }\P^3\\
\hline
4&(1,1,1;0)&\cong&\text{n/a}&\text{fibration}&dP_3\slash\P^2\\
\hline
5&(0,1,1;0)&3\times(1,1,1,-1,-1)\text{ flips}&\text{n/a}&\text{fibration}&(Y_3\subset\mathbb{P}^4)\slash\P^1 \\
\hline
6&(0,1,2;0)&3\times(1,1,1,-1,-2)\text{ flops}&\text{n/a}&\text{contraction}&Y_6\subset\P(1,1,1,1,2,2)\\
\hline
7&(0,0,1;0)&\text{n/a}&\text{n/a}&\text{contraction}&\text{Fano }Y_3\subset\P^5\\
\hline
8&(0,0,2;0)&\text{n/a}&\text{n/a}&\text{contraction}&\text{Fano }Y_6\subset\P(1,1,1,2,2,2)\\
\hline
9&(0,2,2;0)&3\times(1,1,1,-2,-2)\text{ antiflip}&\text{n/a}&\text{fibration}&(Y_6\subset\P(1^3,2^2))\slash\P^1\\ 
\hline
10&(1,1,1,-1)&(1,1,1,-1,-1)\text{ flip}&\text{n/a}&\text{fibration}&dP_8\slash\P^2\\
\hline
11&(1,1,2;-1)&(1,1,1,-1,-2)\text{ flop}&\text{n/a}&\text{contraction}&Y_5\subset\P(1^5,2)\\
\hline
12&(1,1,2;-2)&(1,1,1,-1,-1)\text{ flip}&\text{n/a}&\text{contraction}&Y_4\subset\mathbb{P}(1^5,2)\\
\hline
13&(1,1,3;-2)&(1,1,1,-1,-1,-3;-2)\text{ flop}&\text{n/a}&\text{contraction}&Y_7\subset\P(1^3,2^2,3)\\
\hline
14&(1,1,3;-3)&(1,1,1,-1,-1)\text{ flip}&\text{n/a}&\text{contraction}&Y_5\subset\P(1^3,2^2,3)\\
\hline
15&(1,2,2;-1)&(1,1,1,-2,-2)\text{ antiflip}&(1,1,1,1,-2,-2;2)\text{ flop}&\text{fibration}&(Y_5\subset\P(1^4,2))\slash\P^1\\
\hline
16&(1,2,2;-2)&(1,1,1,-2,-2)\text{ flop}&(1,1,1,-1,-1)\text{ flip}&\text{fibration}&(Y_4\subset\P(1^4,2))\slash\P^1\\
\hline
17&(1,1,4;-3)&(1,1,1,-1,-1,-4;-3)\text{ flop}&\text{n/a}&\text{contraction}&Y_{10}\subset\P(1^3,3^2,4)\\
\hline
18&(1,2,3;-3)&(1,1,1,-1,-3)\text{ antiflip}&(1,1,1,-1,-2)\text{ flop}&\text{contraction}&Y_7\subset\P(1^4,2,3)\\
\hline
19&(1,2,3;-3)&(1,1,1,-1,-2)\text{ flop}&\cong&\text{contraction}&Y_6\subset\P(1^4,2,3)
\end{array}\]
\caption{{\footnotesize Part 1 data of Type~\III\ and \IV\ links from general degree~3 del Pezzo hypersurface fibrations over $\P^2$\label{table!dP3list1}}}
\end{table}

\begin{table}[ht]
\vspace{0.7cm}
\[\begin{array}{cc||c|c|c|c}
\text{No.}&(a,b,c;d)&\psi_1&\psi_2&\varphi^\prime&\text{new model}\\
\hline
\hline
20&(2,2,2;-2)&(1,1,1,-2,-2)\text{ antiflip}&\text{n/a}&\text{fibration}&dP_2\slash\P^2\\
\hline
21&(1,2,4;-3)&(1,1,1,-1,-3)\text{ antiflip}&\cong&\text{contraction}&Y_9\subset\P(1^3,2,3,4)\\
\hline
22&(1,3,3;-3)&(1,1,1,-1,-3)\text{ antiflip}&\cong&\text{fibration}&(Y_6\subset\P(1^3,2,3))\slash\P^1\\
\hline
23&(2,2,3;-3)&(1,1,1,-2,-2)\text{ antiflip}&\text{n/a}&\text{contraction}&Y_6\subset\P(1^5,3)\\
\hline
24&(1,3,4;-3)&(1,1,1,-2,-2)\text{ antiflip}&\cong&\text{contraction}&Y_9\subset\P(1^4,3,4)\\
\hline
25&(2,2,4;-4)&(1,1,1,-2,-2)\text{ antiflip}&\text{n/a}&\text{contraction}&Y_8\subset\P(1^3,2^2,4)\\
\hline
26&(2,3,3;-3)&(1,1,1,-2,-3)\text{ antiflip}&(1,1,1,2,-1,-1;3)\text{ flop}&\text{fibration}&(Y_6\subset\P(1^4,3))\slash\P^1\\
\hline
27&(1,4,4;-3)&(1,1,1,-1,-4,-4;-3)\text{ antiflip}&\cong&\text{fibration}&(Y_9\subset\P(1^3,3,4))\slash\P^1\\
\hline
28&(2,2,5;-5)&(1,1,1,-2,-5)\text{ ntiflip}&\text{n/a}&\text{contraction}&Y_{10}\subset\P(1^3,3^2,5)\\
\hline
29&(2,3,4;-4)&(1,1,1,-2,-3)\text{ antiflip}&(1,1,1,-1,-2)\text{ flop}&\text{contraction}&Y_8\subset\P(1^4,2,4)\\
\hline
30&(2,3,5,-5)&(1,1,1,-2,-3)\text{ antiflip}&(1,1,2,-1,-3)\text{ flop}&\text{contraction}&Y_{10}\subset\P(1^3,2,3,5)\\
\hline
31&(2,4,4;-4)&(1,1,1,-2,-4)\text{ antiflip}&(1,1,1,1,-2,-2;2)\text{ antiflip}&\text{fibration}&(Y_8\subset\P(1^3,2,4))\slash\P^1\\
\hline
32&(2,3,6;-6)&(1,1,1,-2,-3)\text{ antiflip}&\cong&\text{contraction}&Y_{12}\subset\P(1^3,3,4,6)\\
\hline
33&(2,4,5;-5)&(1,1,1,-2,-4)\text{ antiflip}&(1,1,2,-2,-3)\text{ antiflip}&\text{contraction}&Y_{10}\subset\P(1^4,3,5)\\
\hline
34&(2,4,6;-6)&(1,1,1,-2,-4)\text{ antiflip}&\cong&\text{contraction}&Y_{12}\subset\P(1^3,2,4,6)\\
\hline
35&(2,5,5;-5)&(1,1,1,-2,-5)\text{ antiflip}&(1,1,2,-3,-3)\text{ antiflip}&\text{fibration}&(Y_{10}\subset\P(1^3,3,5))\slash\P^1\\
\hline
36&(2,5,6;-6)&(1,1,1,-2,-5)\text{ antiflip}&\cong&\text{contraction}&Y_{12}\subset\P(1^4,4,6)\\
\hline
37&(2,6,6;-6)&(1,1,1,-2,-6)\text{ antiflip}&\cong&\text{fibration}&(Y_{12}\subset\P(1^3,4,6))\slash\P^1
\end{array}\]
\caption{{\footnotesize Part 2 data of Type~\III\ and \IV\ links from general degree~3 del Pezzo hypersurface fibrations over $\P^2$\label{table!dP3list2}}}
\end{table}\end{landscape}

\bibliographystyle{amsplain}
\bibliography{bib}

\vspace{0.4cm}

\noindent Universit\"{a}t Basel, Mathematisches Institut, Rheinsprung 21,  CH-4051 Basel, Switzerland \\       
{\it E-mail:}  \url{hamid.ahmadinezhad@unibas.ch}

\end{document}